\renewcommand{\mathcal}[1]{\mathscr{#1}}
\@ifdefinable\equationname{\let\equationname\equationautorefname}
\def\equationautorefname~#1\@empty\@empty\null{(#1\@empty\@empty\null)}%
\@ifdefinable\itemname{\let\itemname\itemautorefname}
\def\itemautorefname~#1\@empty\@empty\null{(#1\@empty\@empty\null)}%
\newcommand{\basetheorem}[3]{\newtheorem{#1}{#2}[#3] \newtheorem*{#1*}{#2} \newcommand{\thmautorefname}{Theorem}}%
\newcommand{\maketheorem}[3]{%
    \newaliascnt{#1}{#3}
    \newtheorem{#1}[#1]{#2}
    \aliascntresetthe{#1}
    \expandafter\def\csname #1autorefname\endcsname{#2}
    \newtheorem*{#1*}{#2}
}%
\theoremstyle{plain}   
\theoremstyle{definition}    
\theoremstyle{remark}    
\numberwithin{equation}{section}
\numberwithin{thm}{section}
\renewcommand{\theenumi}{\alph{enumi}}
\renewcommand{\p@enumii}{\theenumi-}
\renewcommand{\labelitemi}{$\m@th\circ$}
\renewcommand{\labelitemii}{$\m@th\diamond$}
\renewcommand{\labelitemiii}{$\m@th\star$}
\renewcommand{\labelitemiv}{$\m@th\cdot$}
\newcommand{\ie}{\textit{i.e.}}
\newcommand{\cf}{\textit{cf.}}
\newcommand{\blank}{\phantom{m}}
\newcommand{\Fr}{{F}}
\newcommand{\Primefield}{\mathbb{F}}
\newcommand{\CO}{\mathcal{O}}
\newcommand{\CM}{\mathcal{M}}
\newcommand{\Hom}{\operatorname{Hom}}
\newcommand{\shom}{\mathcal{H}\!\mathit{om}}
\newcommand{\rshom}{\mathit{R}\shom}
\newcommand{\SHom}{\operatorname{\shom}}
\newcommand{\RSHom}{\operatorname{\rshom}}
\newcommand{\End}{\operatorname{End}}
\newcommand{\usc}{\underline{\phantom{M}}}
\newcommand{\tensor}{\otimes}
\newcommand{\Spec}{\operatorname{Spec}}
\newcommand{\defeq}{\stackrel{\scriptscriptstyle \operatorname{def}}{=}}
\newcommand{\Gen}{\mathsf{Gen}}
\renewcommand{\min}{\text{min}}
\renewcommand{\to}[1][]{\xrightarrow{\ #1\ }}
\newcommand{\onto}[1][]{\protect{\xrightarrow{\ #1\ }\hspace{-0.8em}\rightarrow}}
\newcommand{\into}[1][]{\lhook \joinrel \xrightarrow{\ #1\ }}
\renewcommand{\frm}{\frak{m}}
\newcommand{\frp}{\frak{p}}
\renewcommand{\phi}{\varphi}
\renewcommand{\min}{\mathrm{min}}
\newcommand{\CN}{\mathcal{N}}
\newcommand{\id}{\operatorname{id}}
\newcommand{\supp}{\operatorname{Supp}}
\newcommand{\Ann}{\operatorname{Ann}}
\newcommand{\BF}{\mathbb{F}}
\newcommand{\Perv}{\operatorname{Perv}}
\renewcommand{\th}{\ensuremath{^{\text{th}}}}
\newcommand{\nilord}{\operatorname{nilord}}
\newcommand{\suppcrys}{\operatorname{Supp_{crys}}}
\newcommand{\ql}{\operatorname{ql}}
\newcommand{\length}{\operatorname{length}}
\newcommand{\Cart}{\operatorname{\sf Cart}}
\newcommand{\CohCart}{\operatorname{\sf CohCart}}
\newcommand{\Crys}{\operatorname{\sf Crys}}
\newcommand{\MinCart}{\operatorname{\sf MinCart}}
\newcommand{\Tr}{\operatorname{Tr}}
\newcommand{\Ca}{{C}}
\newcommand{\Cadj}{\kappa}
\newcommand{\Cech}{\v{C}ech}
\newcounter{themargin}
\def\m?#1{\textcolor{Mahogany}{\textbf{???$^{\text{\arabic{themargin}}}$}}{\marginpar{\footnotesize\color{Mahogany}\fbox{\parbox{\marginparwidth}{\textbf{mnl --- \arabic{themargin} ---}\addtocounter{themargin}{1}\\ #1}}} \immediate\write16{}%
\immediate\write16{Warning: There was still a question mark . . . }%
\immediate\write16{}}}
\def\mf?#1{\textcolor{Mahogany}{\footnotesize\newline{\color{Mahogany}\fbox{\parbox{\textwidth-5pt}{\textbf{mnl: } #1}}}}}
\definecolor{WarmDarkGray}{rgb}{0.25,0.2,0.18}
\newcommand{\comment}[1]{}
\newcommand{\nil}{\operatorname{nil}}
\newcommand{\CS}{\mathcal{S}}
\begin{document}

\title{Cartier modules: finiteness results}
\author{Manuel Blickle and Gebhard Böckle}
\maketitle
\footnotetext{\textbf{Address:} Fakultät für Mathematik, Universit\"at Duisburg-Essen, 45117 Essen, Germany}
\footnotetext{\textbf{email:} \texttt{manuel.blickle@gmail.com}, \texttt{boeckle@uni-due.de}}
\footnotetext{\textbf{Funding:} The first author is fully supported by a Heisenberg grant of the DFG. Both authors are partially supported by the DFG SFB/TR45 ``Periods, moduli spaces and the arithmetic of algebraic varieties''}
\footnotetext{\textit{Keywords and phrases:} Frobenius, $p^{-1}$-linear maps, Cartier operator}
\footnotetext{\textit{2010 Mathematics subject classification.} Primary: 14G17; Secondary: 13A35}

\begin{abstract}
 On a locally Noetherian scheme $X$ over a field of positive characteristic $p$ we study the category of coherent $\CO_X$-modules $M$ equipped with a $p^{-e}$-linear map, \ie~an additive map $\Ca: \CO_X \to \CO_X$ satisfying $r\Ca(m)=\Ca(r^{p^e}m)$ for all $m \in M, r \in \CO_X$. The notion of nilpotence, meaning that some power of the map $\Ca$ is zero, is used to rigidify this category. The resulting quotient category, called Cartier crystals, satisfies some strong finiteness conditions. The main reasult in this paper states that, if the Frobenius morphism on $X$ is a finite map, \ie~if $X$ is $F$-finite, then all Cartier crystals have finite length. We further show how this and related results can be used to recover and generalize other finiteness results of Hartshorne-Speiser \cite{HaSp}, Lyubeznik \cite{Lyub}, Sharp \cite{Sharp.GradedAnnihil}, Enescu-Hochster \cite{EnescuHochster.FrobLocCohom}, and Hochster \cite{Hochster.FinitenessFmod} about the structure of modules with a left action of the Frobenius. For example, we show that over any regular $F$-finite scheme $X$ Lyubeznkik's $F$-finite modules \cite{Lyub} have finite length.
\end{abstract}


\section{Introduction}
The Frobenius morphism is the fundamental tool in algebra and geometry over a field of positive characteristic. For a scheme $X$ over the finite field $\Primefield_q$ of $q=p^e$ elements, with $p$ a prime number, the Frobenius morphism is the unique $\Primefield_q$-linear map that sends sections of the structure sheaf $\CO_X$ to their $q\th$ power. In other words, there is a natural \emph{left} action of the Frobenius $\Fr$ on the structure sheaf $\CO_X$ satisfying $\Fr(rs)=r^q\Fr(s)$. An additive map on an arbitrary $\CO_X$-module satisfying such a relation is called a $q$-linear map. From the example of the structure sheaf $\CO_X$ with its natural Frobenius action, many quasi-coherent sheaves $M$ on $X$ receive a natural \emph{left} action of $\Fr$, or equivalently are equipped with a map $M \to \Fr_*M$. Important examples are the local cohomology modules: the study of their natural Frobenius action has lead to deep structural results \cite{HuSha.LocCohom,HaSp,Lyub} which are important for the study of singularities, amongst other things \cite{Smith.rat,Smith.sing,Hara,Bli.int,BliBon.LocCohomMult}.

In this article we consider a dual situation, namely that of a quasi-coherent $\CO_X$-module $M$ equipped with a \emph{right} action of the Frobenius $\Fr$. That is, $M$ is equipped with an $\CO_X$-linear map $\Ca=\Ca_M: \Fr_*M \to M$. Such a map is called \emph{$q^{-1}$-linear}, or \emph{Cartier linear}. We call a pair $(M,C)$ consisting of a quasi-coherent $\CO_X$-module $M$ and a $q^{-1}$-linear map $C$ on $M$ a \emph{Cartier module}. The prototypical example of a Cartier module is the dualizing sheaf $\omega_X$ of a smooth and finite type $k$-scheme $X$ ($k$ perfect) together with the classical Cartier operator, \cite{Cartier57,Katz}. The relation to the \emph{left} actions described above is via Grothendieck-Serre duality where the left action on $\CO_X$ corresponds to the Cartier map on $\omega_X$, \cf~\cite{BliBoe.CartierCrys}.

\subsection*{Finite length of Cartier modules up to nilpotence}
In this article we develop the basic theory of Cartier modules and derive a crucial structural result; namely, if $X$ is an $F$-finite scheme, then
\begin{quote}
    every coherent Cartier module has -- up to nilpotence -- finite length.
\end{quote}
A Cartier module $M=(M,\Ca)$ is called \emph{nilpotent}, if $\Ca^e=0$ for some $e \geq 0$.  This notion of nilpotence is crucial in our treatment. In fact, we study a theory of coherent Cartier modules \emph{up to nilpotence}. To formalize this viewpoint one localizes the abelian category of \emph{coherent} Cartier modules at its Serre subcategory of nilpotent Cartier modules. The localized category, which we call \emph{Cartier crystals}, is an abelian category. Its objects are coherent Cartier modules, but the notion of isomorphism has changed: A morphism $\phi: M \to N$ of Cartier modules induces an isomorphism of associated Cartier crystals if $\phi$ is a \emph{nil-isomorphism}, \ie~both, kernel and cokernel of $\phi$ are nilpotent. Thus, more precisely, the main result of this paper states:

\begin{mainthm*}[\autoref{t.FiniteLenth}]
If $X$ is a locally Noetherian scheme over $\Primefield_q$ such that the Frobenius $\Fr$ on $X$ is a finite map, then every coherent Cartier module has -- up to nilpotence -- finite length. More precisely, every coherent Cartier crystal has finite length.
\end{mainthm*}

\begin{proof}[Idea of the proof] Let us isolate the main points in the proof. The ascending chain condition is clear since $X$ is Noetherian and the underlying $\CO_X$-module of the Cartier module is coherent. Hence one has to show that any descending chain of Cartier submodules $M \supseteq M_1 \supseteq M_2 \supseteq M_3 \supseteq \ldots$ stabilizes up to nilpotence. This is shown by induction on the dimension of $X$. Two main ideas enter into its proof.
\begin{enumerate}
\item \label{z.ingrA}(\autoref{t.ImagesStabilize}) Any coherent Cartier module $(M,C)$ is nil-isomorphic to a Cartier submodule $(\underline{M},C)$ with surjective structural map, \ie~with $C(\underline{M})=\underline{M}$. It is easy to see that the support of $\underline{M}$ is a \emph{reduced}\footnote{Throughout this article, when we speak  of the support of a sheaf $M$ of $\CO_X$-modules we mean the subscheme of $X$ whose ideal sheaf is the annihilator of $M$.} subscheme of $X$.
\end{enumerate}
This part is proved by showing that the descending chain of images $M \supseteq C(M) \supseteq C^2(M) \supseteq \ldots$ stabilizes and this stable image is then $\underline{M}$ (see \autoref{t.ImagesStabilize}). This step may be viewed as a global version of an important theorem in Hartshorne and Speiser \cite{HaSp} about co-finite modules (over a complete, local, and $F$-finite ring) with a left action of the Frobenius. In fact, the proof in \cite{HaSp} of the counterpart of \autoref{t.ImagesStabilize} implicitly uses Cartier modules, but is much less general than the proof given here which is due to Gabber \cite[Lemma 13.1]{Gabber.tStruc} (see \cite{LyubKazmZheng,BliSchTakZha_DiscRat} for an application of this result to the theory of test ideals, and \cite{Bli.MinimalGamma} for yet another version). In that same paper of Gabber one finds as well the other crucial ingredient; as part of the proof of his \cite[Lemma 13.3]{Gabber.tStruc}:
\begin{enumerate}
\stepcounter{enumi}
\item \label{z.ingrB} (\autoref{t.UniformSupport}) Let $(M,C)$ be a coherent Cartier module with $C(M)=M$. Then there is a dense open set $U \subseteq X$ such that for all Cartier submodules $N \subseteq M$ with the same generic rank as $M$, the quotient $M/N$ is supported in the closed set $Y=X\setminus U$.
\end{enumerate}
The critical point is that the closed subset $Y$ only depends on $(M,C)$, but not on the submodule $N$. With \autoref{z.ingrA} and \autoref{z.ingrB} at hand, the proof of the Main Theorem proceeds roughly as follows: By \autoref{z.ingrA} we may replace the above chain by a nil-isomorphic chain where all structural maps $C_i:M_i \to M_i$ are surjective. The surjectivity is then automatically true for all quotients $M/M_i$. Since the generic ranks of $M_i$ can only drop finitely many times, we can assume that -- after truncating -- the generic ranks are constant. Part \autoref{z.ingrB} then implies that all quotients $M/M_i$ are supported on $Y$. In fact, the reducedness of the support of $M/M_i$ by \autoref{z.ingrA} implies that if $I$ is a sheaf of ideals cutting out $Y$, then $I\cdot(M/M_i) = 0$ and hence $IM\subseteq M_i$ for all $i$. So the stabilization of the chain $\{M_i\}_{i > 0}$ is equivalent to the stabilization of the chain $\{M_i/M'\}_{i>0}$ where $M'$ is the smallest Cartier submodule of $M$ containing $IM$. But the latter may be viewed as a chain of Cartier modules on the lower dimensional scheme $Y$, hence by induction this chain stabilizes.
\end{proof}

There are reasons why one expects such a strong finiteness statement for Cartier modules. The most apparent one, which also explains why Gabber's \cite{Gabber.tStruc} is so extremely helpful in our proof, is as follows: Combining key results of Emerton and Kisin \cite{EmKis.Fcrys} or Böckle and Pink \cite{BoPi.CohomCrys} on a Riemann-Hilbert-type correspondence in positive characteristic with ongoing work of the authors \cite{BliBoe.CartierCrys} (see also \autoref{s.applications}) one expects that the category of Cartier crystals on $X$ is equivalent to the perverse constructible sheaves (for the middle perversity) of $\Primefield_q$-vectorspaces on $X_{\text{\'et}}$. One of the key results in \cite{Gabber.tStruc} is that this latter category is artinian and noetherian.

A more explicit but entirely heuristic reason for the finite length of Cartier crystal is the strong contracting property Cartier linear maps enjoy. This is quite elementary and nicely explained by Anderson \cite{AndersonL} where he studies $L$-functions modulo $p$. He shows that in a finitely generated $R=k[x_1,\ldots,x_n]$-module $M$, equipped with a Cartier-liner map $C$, there is a finite dimensional $k$-subspace $M_c$ such that some power of $C$ maps any element $m \in M$ into $M_c$. Very naively this indicates that -- up to nilpotence (thought this is not quite correct!) -- the \emph{finite dimensional} $k$-vectorspace $M_c$ with $C|_{M_c}$ contains all the information of $(M,C)$. So we should expect that $(M,C)$ has finite length since $M_c$ is finite dimensional.

\subsection*{Consequences, applications and relation to other finiteness results.}

There are some immediate formal consequences of the finite length result for Cartier crystals. Namely, one has a theory of Jordan-Hölder series, hence the notion of length of a Cartier crystal, resp. of quasi-length (\ie~length up to nilpotence) of a Cartier-module. This is explained in \autoref{s.SimpleCartDecompSeries} where we further show a companion of a result of Hochster \cite[(5.1) Theorem]{Hochster.FinitenessFmod} about the finiteness of homomorphisms of Lyubeznik's $F$-finite modules. Our result in this context states:
\begin{thm*}[\autoref{t.finiteHom}]
Let $X$ be $F$-finite and let $\CM,\CN$ be Cartier crystals. Then the module $\Hom_{\Crys}(\CM,\CN)$ is a finite-dimensional vector space over~$\BF_q$.
\end{thm*}
The finiteness of the homomorphism set, together with the finite length, formally implies the finiteness of submodules of any Cartier module (up to nilpotence), see also \cite[(5.2) Corollary]{Hochster.FinitenessFmod}.
\begin{thm*}[\autoref{t.FiniteSubmodules}]
Let $X$ be $F$-finite. Then a coherent Cartier module has, up to nilpotence, only finitely many Cartier submodules
\end{thm*}
As already mentioned, the theory of $\CO_X$-modules equipped with a left action of the Frobenius is much more extensively studied as the right actions we investigate here, and there are some deep known results which are quite similar to our Main Theorem. Examples are the above mentioned result of Hartshorne and Speiser in \cite{HaSp} about co-finite modules with a left Frobenius action, and, most importantly, Lyubeznik's result \cite[Theorem 3.2]{Lyub} about the finite length of objects in his category of $\mathcal{F}$-finite modules over a regular ring (which is essentially of finite type over a regular local ring).

In the regular $F$-finite case, the connection between our and Lyubeznik's result is obtained by tensoring with the inverse of the dualizing sheaf $\omega_X$ to obtain an equivalence of Cartier modules and the category of $\gamma$-sheaves which was recently introduced in \cite{Bli.MinimalGamma}. $\gamma$-sheaves are $\CO_X$-modules with a map $\gamma: M  \to \Fr^*M$, and it was shown that the associated category of $\gamma$-crystals is equivalent to the category of Lyubeznik's $\mathcal{F}$-finite modules \cite{Lyub} (in the affine $F$-finite case), resp. to Emerton and Kisin's category of locally finitely generated unit $\CO_X[F]$-modules \cite{EmKis.Fcrys} (finite type over a $F$-finite field case). Hence our main result yields the following (partial) extension of the main result in \cite[Theorem 3.2]{Lyub}.

\begin{thm*}[\autoref{t.FiniteLenthFGunit}]
Let $X$ be a regular $F$-finite scheme. Then every finitely generated unit $\CO_X[F]$-module (resp. $\mathcal{F}$-finite module in the terminology of \cite{Lyub}) has finite length.
\end{thm*}

The connection to Hartshorne and Speiser's theory of co-finite modules with a left Frobenius action relies on Matlis duality and has been used, at least implicitly, many times before, for an explicit use see \cite{Gabber.tStruc,BliSchTakZha_DiscRat}. What it comes down to is that if $F$ is finite, then the Matlis dual functor commutes with $\Fr_*$, and hence for a complete local and $F$-finite ring $R$ this functor yields an equivalence of categories between coherent Cartier modules over $\Spec R$ and co-finite left $R[F]$-modules. This equivalence preserves nilpotence and hence we recover the analogous result of our main theorem for co-finite left $R[\Fr]$-modules, \cf~\autoref{t.FiniteLengthCofinite}, \cite[Theorem 4.7]{Lyub}.

Besides these important consequences we show how our theory implies \emph{global} structural results which generalize analogous results for modules with a \emph{left} Frobenius action over a \emph{local} ring obtained recently by Enescu and Hochster \cite{EnescuHochster.FrobLocCohom}, and Sharp \cite{Sharp.GradedAnnihil}. Our global version states:
\begin{thm*}[\autoref{t.finitelymanyirred}]
Let $X$ be an $F$-finite scheme, and $(M,C)$ a coherent Cartier module with surjective structural map $C$. Then the set
\[
    \{\supp (M/N) | N \subseteq M \text{ a Cartier submodule}\}
\]
is a finite collection of reduced sub-schemes. It consists of the finite unions of the finitely many irreducible closed sub-schemes contained in it.
\end{thm*}
In the same spirit we show the following result, which generalizes \cite[Corollary 4.17]{EnescuHochster.FrobLocCohom}:
\begin{thm*}[\autoref{t.AntiNilpSplit}]
Let $R$ be a Noetherian and $F$-finite ring. Assume that $R$ is $F$-split, \ie~there is a map $C:\Fr_*R \to R$ splitting the Frobenius.
\begin{enumerate}
\item The Cartier module $(R,C)$ has only finitely many Cartier submodules (\autoref{t.FiniteCartFsplit}).
\item If $R$ is moreover complete local, then the injective hull $E_R$ of the residue field of $R$ has \emph{some} left $R[F]$-module structure such that $E_R$ has only finitely many $R[F]$-submodules.
\item If $(R,\frm)$ is also quasi-Gorenstein, then the top local cohomology module $H^d_\frm(R)$ with its standard left $R[F]$-structure has only finitely many $R[F]$-submodules.
\end{enumerate}
\end{thm*}
Note that in this result we show the finiteness of the actual number of Cartier or of $R[F]$-submodules, and not just the finiteness up to nilpotence which already follows by \autoref{t.FiniteSubmodules} as discussed above.

Along the way, in \autoref{r.CompatiblySplit}, we point out how one may derive the finiteness results in the theory of Frobenius splittings about the finiteness of compatibly split primes of \cite{Schwede.Fadjunction,KumarMehta.CompatiblySplit}.

\subsection*{Structure of the paper}
The theory of Cartier modules is presented in \autoref{s.Cart}. We set up the notation and derive basic results needed in the rest of the article. In particular, we deal with the all important notion of nilpotence for Cartier modules, and show that nilpotent Cartier modules are a Serre-subcategory. This section also includes a treatment of duality for a finite morphism (in a very basic way), a discussion of the main example of a Cartier module (the dualizing sheaf), and a thorough discussion of the already interesting case where $X$ is the spectrum of a field.

In \autoref{s.MinimalCartier} the category of Cartier crystals is introduced, and the existence of a unique (\ie~minimal) representative of a Cartier crystal by a Cartier module is shown, \cf~\autoref{t.ExistenceMinimal}. This result is equivalent (see \autoref{t.minimalGamma}) to the existence of minimal $\gamma$-sheaves recently proved by the first author in \cite{Bli.MinimalGamma}, but the proof in the present article is much simpler. We conclude this section with a Kashiwara type equivalence for Cartier crystals, saying that if $Y \subseteq X$ is a closed subscheme, then there is an equivalence of categories of Cartier crystals on $Y$ and Cartier crystals on $X$ with support in $Y$.

The proof of the main result and some immediate consequences are given in \autoref{s.FiniteLength}. Since the arguments given in \cite[Section 13]{Gabber.tStruc} are rather brief, we decided to include these proofs with many details added in this section. This also makes the present paper quite self contained and, as we hope, accessible to a reader with knowledge of algebraic geometry at the level of~\cite{Hartshorne}.

The final \autoref{s.applications} discusses the connections of the category of Cartier modules to other categories of modules with an action of the Frobenius, and it derives the consequences which we outlined above.

\subsection*{Acknowledgements}
The main theorem of this article was worked out while the first author was attending the \emph{Leopoldina-Symposium in Algebraic and Arithmetic Algebraic Geometry} in Ascona in May 2009, and we thank the funding agencies who made this visit possible. The basic theory of Cartier modules we present here is part of the more extensive project \cite{BliBoe.CartierCrys} of the authors, in particular the equivalences used in \autoref{s.applications} were worked out in \cite{BliBoe.CartierCrys} as a by-product of showing an equivalence of the category of $\tau$-crystals of \cite{BoPi.CohomCrys} and of finitely generated unit $\CO_X[\Fr]$-modules of \cite{EmKis.Fcrys}. We are thankful to Karl Schwede for numerous comments on an earlier draft of this manuscript.

\subsection*{Notation and Conventions}
We fix a prime number $p$ and a power $q=p^k$. Let $\Primefield=\Primefield_q$ be the finite field with $q$ elements. Our schemes $X$ will always be locally Noetherian and separated over $\Primefield$. We denote by $\Fr$ the $q$\th-power Frobenius, \ie~the morphism of schemes $\Fr: X \to X$ which is the identity on the underlying topological space, and the $q$\th-power map on sections of $\CO_X$. For the most part we also assume that $X$ is $\Fr$-finite, \ie~that $\Fr$ is a finite morphism of schemes. It would be desirable to relax this hypothesis and to only assume that $X$ is excellent (to have an open regular locus) or possibly also assuming that $X$ has a dualizing complex. By \cite{Kunz.NoethCharP} an $F$-finite ring is excellent, and in \cite{Gabber.tStruc} it is stated that an $F$-finite Noetherian scheme has a dualizing complex (with proof sketched in the affine case).

\section{Cartier modules}\label{s.Cart}
\begin{defn}
A \emph{Cartier module} on $X$ is a quasi-coherent sheaf $M$ of $\CO_X$-modules equipped with a $q^{-1}$-linear map $\Ca=\Ca_M: M \to M$, that is an additive map satisfying $\Ca(r^qm)=r \Ca(M)$ for $r \in \CO_X$. Equivalently, $C$ is a $\CO_X$-linear map $\Ca: \Fr_*M \to M$ and we use both points of view interchangeably in this paper.
\end{defn}
We refer to $\Ca=\Ca_M$ as the structural map of $M$ and often omit the subscript. If $X=\Spec R$ is affine, denote by $R[\Fr]$ the ring
\[
    R[\Fr]=\frac{R\{ \Fr \}}{R\langle r^q \Fr - \Fr r\ |\ r \in R \rangle}
\]
which is obtained from $R$ by adjoining a non-commutative variable subject to the relation $r^q\Fr = \Fr r$ for all $r \in R$. Then a Cartier module on $X$ is nothing but a \emph{right module} over $R[\Fr]$. A general scheme $X$ may be covered by finitely many affines, gluing the respective rings $R[\Fr]$ one obtains a sheaf of rings $\CO_X[\Fr]$. A Cartier module on $X$ is then a sheaf of \emph{right} $\CO_X[\Fr]$-modules which is quasi-coherent as a sheaf of
$\CO_X$-modules.

A map of Cartier modules is a map $\phi: M \to N$ of the underlying quasi-coherent sheaves such that the diagrams
\[
\xymatrix{
    M \ar[d]_{\Ca_M}\ar[r]^\phi &N \ar[d]^{\Ca_N} && \Fr_*M \ar[d]_{\Ca_M}\ar[r]^{\Fr_*\phi} &\Fr_*N \ar[d]^{\Ca_N}\\
    M \ar[r]^\phi &N && M \ar[r]^\phi &N}
\]
commute. Of course both diagrams are equivalent, the left one being a diagram of additive maps, the right one of $\CO_X$-linear maps. Kernels and cokernels of maps of Cartier modules are the same as for (coherent) sheaves of $\CO_X$-modules with the naturally induced Cartier linear map. The category of (coherent) Cartier modules on $X$ is an abelian category, since it is a category of right modules over the ring $\CO_X[\Fr]$. We denote the category of Cartier modules on $X$ by $\Cart(X)$ and the coherent ones by $\CohCart(X)$.

\subsection{Basic pushforward and pullback}
\begin{lem}
\label{t.CartierPushforward}
Let $f: Y \to X$ be a morphism of schemes, then the pushforward $f_*$ for quasi-coherent $\CO_Y$-modules induces a functor $f_*:\Cart(Y) \to \Cart(X)$.
\end{lem}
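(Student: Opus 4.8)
The plan is to exploit the naturality of the Frobenius morphism with respect to an arbitrary morphism of schemes, which produces an identification of functors $f_* \circ \Fr_{Y,*} = \Fr_{X,*} \circ f_*$ on quasi-coherent sheaves; transporting the structural map of a Cartier module on $Y$ across this identification then equips the pushforward with its Cartier module structure, and functoriality in the morphisms is purely formal.

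First I would record the commutativity of the square
\[
\xymatrix{
Y \ar[r]^{\Fr_Y} \ar[d]_f & Y \ar[d]^f \\
X \ar[r]^{\Fr_X} & X.
}
\]
Since the (absolute) Frobenius is the identity on the underlying topological space, both composites agree as continuous maps, each being $f$. On structure sheaves one must check that the comorphism $f^\sharp: \CO_X \to f_*\CO_Y$ is compatible with the $q$\th-power maps; this holds because $f^\sharp$ is a ring homomorphism, so $f^\sharp(s^q) = (f^\sharp s)^q$ for every local section $s$. Thus $f \circ \Fr_Y = \Fr_X \circ f$, and applying the pushforward functor yields the equality of functors $f_* \circ \Fr_{Y,*} = \Fr_{X,*} \circ f_*$ on quasi-coherent $\CO_Y$-modules.

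Next, given a Cartier module $(M,\Ca)$ on $Y$ with $\CO_Y$-linear structural map $\Ca: \Fr_{Y,*}M \to M$, I would apply $f_*$ to obtain $f_*\Ca: f_*\Fr_{Y,*}M \to f_*M$. Rewriting the source via the functor identity above as $f_*\Fr_{Y,*}M = \Fr_{X,*}(f_*M)$, this becomes an $\CO_X$-linear map $\Fr_{X,*}(f_*M) \to f_*M$, that is, a $q^{-1}$-linear map on $f_*M$; its $\CO_X$-linearity follows since $f_*$ carries an $\CO_Y$-linear map to an $f_*\CO_Y$-linear, hence $\CO_X$-linear, map. This equips $f_*M$ with a canonical Cartier module structure, which I denote $f_*(M,\Ca) \defeq (f_*M, f_*\Ca)$.

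Finally I would verify functoriality. For a morphism $\phi: M \to N$ of Cartier modules on $Y$, the commuting square expressing compatibility of $\phi$ with the structural maps is sent by $f_*$ to the corresponding square for $f_*\phi$; since the identification $f_*\Fr_{Y,*} = \Fr_{X,*}f_*$ is natural in its argument, the rewriting of sources is compatible with $f_*\phi$, so $f_*\phi$ is a morphism of Cartier modules on $X$. That $f_*$ respects identities and composition is inherited from the sheaf-theoretic pushforward. The only substantive point is the naturality square $f \circ \Fr_Y = \Fr_X \circ f$; everything else is formal, so I expect no real obstacle once that commutativity is established.
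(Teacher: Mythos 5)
Your proposal is correct and follows exactly the route of the paper's one-line proof, which simply asserts that $f_* \circ \Fr_*$ is naturally isomorphic to $\Fr_* \circ f_*$; you have supplied the details (commutativity of Frobenius with $f$ via the ring-homomorphism property of $f^\sharp$, transport of the structural map, and naturality for functoriality) that the paper leaves to the reader.
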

\begin{proof} Clear since $f_* \circ \Fr^e_*$ is naturally isomorphic to $\Fr^e_* \circ f_*$. \end{proof}
In the cases where $f_*$ preserves coherence for $\CO_Y$-modules, it also preserves coherence for Cartier modules. In particular, if $f$ is finite, then $f_*$ preserves coherence, hence induces a functor from \emph{coherent} Cartier modules on $Y$ to \emph{coherent} Cartier modules on $X$.

Now we discuss pullback for Cartier modules. For simplicity we only consider the special cases of open and closed immersions, leaving a more general discussion for later, see also \cite{BliBoe.CartierCrys}.
\begin{lem}
\label{t.CartierLocalizes}
If $S$ is a sheaf of multiplicative subsets of $\CO_X$, then the localization functor $S^{-1}$ on quasi-coherent $\CO_X$-modules restricts to an exact functor on Cartier modules which preserves coherence. Furthermore, the localization map is a map of Cartier modules.
\end{lem}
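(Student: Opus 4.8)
The plan is to reduce everything to the corresponding facts for quasi-coherent $\CO_X$-modules, the only genuine content being the construction of the localized structural map and the check that it is again $q^{-1}$-linear. The conceptual point is that $S^{-1}$ commutes with the Frobenius pushforward: since $\Fr$ is the identity on the underlying topological space and merely twists the $\CO_X$-action by the $q$-th power map, inverting a section $s\in S$ in the twisted module $\Fr_*M$ amounts to inverting $s^q$ in $M$; as $s$ is invertible if and only if $s^q$ is, these produce the same localization. First I would make this precise by exhibiting a natural isomorphism $S^{-1}(\Fr_*M)\cong \Fr_*(S^{-1}M)$, verified on stalks.

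Granting this, the localized Cartier structure is obtained by applying the exact functor $S^{-1}$ to the $\CO_X$-linear structural map $\Ca\colon \Fr_*M\to M$ and composing with the above identification, giving
\[
    S^{-1}\Ca\colon \Fr_*(S^{-1}M)\cong S^{-1}(\Fr_*M)\to S^{-1}M .
\]
Concretely this is the assignment $m/s^q\mapsto \Ca(m)/s$, which makes sense because every section of $S^{-1}M$ can be written with a $q$-th power denominator via $m/s=s^{q-1}m/s^q$. I would then record the explicit well-definedness check: if $u(s'^qm-s^qm')=0$ for some $u\in S$, then multiplying by $u^{q-1}$ and applying $\Ca$ to the identity $(us')^qm=(us)^qm'$ yields $us'\Ca(m)=us\Ca(m')$ by $q^{-1}$-linearity, so $\Ca(m)/s=\Ca(m')/s'$ in $S^{-1}M$. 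That $S^{-1}\Ca$ is itself $q^{-1}$-linear over $S^{-1}\CO_X$ is then immediate from the $q^{-1}$-linearity of $\Ca$.

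The remaining assertions are formal. Exactness of $S^{-1}$ on Cartier modules follows from exactness of localization on the underlying quasi-coherent sheaves, since kernels and cokernels of Cartier-module maps are computed as $\CO_X$-modules with the induced structural map. Coherence is preserved because $S^{-1}M$ is coherent as a sheaf of $S^{-1}\CO_X$-modules whenever $M$ is coherent over the locally Noetherian scheme $X$. Finally, the localization map $M\to S^{-1}M$, $m\mapsto m/1$, commutes with the structural maps, since $S^{-1}\Ca(m/1)=\Ca(m)/1$, so it is a morphism of Cartier modules, and its naturality in $M$ is inherited from that of ordinary localization.

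I expect the only real obstacle to be the compatibility of localization with the Frobenius twist, encapsulated in the isomorphism $S^{-1}(\Fr_*M)\cong\Fr_*(S^{-1}M)$ and equivalently in the well-definedness of $m/s^q\mapsto\Ca(m)/s$; this is precisely where the bookkeeping with $q$-th powers of denominators must be handled carefully. Once that single point is settled, every other claim is a routine transcription of standard properties of localization of modules.
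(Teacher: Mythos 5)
Your proposal is correct and follows essentially the same route as the paper: both rest on the identification $S^{-1}\Fr_*M\cong\Fr_*(S^{-1}M)$ (the paper phrases it as $S^{-q}M=S^{-1}M$) and then transport the structural map to get the formula $\Ca(\frac{m}{s})=\frac{\Ca(ms^{q-1})}{s}$, which is your $m/s^q\mapsto\Ca(m)/s$. The only difference is that you spell out the well-definedness check that the paper leaves implicit.
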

\begin{proof}
Note that for any quasi-coherent $\CO_X$-module $M$ we have $\Fr_*(S^{-1}M)=S^{-1}\Fr_*(M)$ since $S^{-q}M = S^{-1}M$ (with the obvious meaning of $S^{-q}$ being the inverse of the multiplicative system consisting of $q$th powers of $S$). This implies that for a Cartier module $M$ the structural map $\Fr_*M \to[\Ca] M$ naturally equips $S^{-1}M$ with a Cartier structure $\Fr_*S^{-1}M \cong S^{-1}\Fr_*M \to[S^{-1}\Ca] M$. Concretely on a fraction $\frac{m}{s}=\frac{ms^{q-1}}{s^q}$ the structural map is given by $\Ca_{S^{-1}M}(\frac{m}{s})=\frac{\Ca_M(ms^{q-1})}{s}$. Obviously the localization map $m \mapsto \frac{m}{1}$ is Cartier linear.
\end{proof}

\begin{cor}
\label{t.RestrOpen}
If $i: U \into X$ is an open immersion then the restriction $i^*$ for quasi-coherent $\CO_X$-modules induces a natural functor from Cartier modules on $X$ to Cartier modules on $U$ that preserves coherence.
\end{cor}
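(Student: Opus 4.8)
The plan is to deduce this directly from \autoref{t.CartierLocalizes}, exploiting that restriction to an open is Zariski-locally just localization at powers of a single element. First I would reduce to the affine situation: cover $X$ by affine opens $\Spec R$ and cover $U \cap \Spec R$ by basic opens $D(f) = \Spec R_f$. On each such $D(f)$ the restriction $i^*M|_{D(f)} = M|_{D(f)}$ is precisely the localization $S^{-1}M$ at the multiplicative set $S = \{1, f, f^2, \ldots\}$, so \autoref{t.CartierLocalizes} equips it with a canonical structural map $\Ca_{S^{-1}M}$ and, moreover, asserts that the localization (= restriction) map $M \to M|_{D(f)}$ is Cartier linear.

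Second, I would check that these local Cartier structures glue to a structural map on $i^*M = M|_U$. The essential point is that the construction in \autoref{t.CartierLocalizes} is compatible with iterated localization: localizing $S^{-1}M$ further at a second system $T$ recovers the localization at the combined multiplicative set, and the explicit formula $\Ca_{S^{-1}M}(\frac{m}{s}) = \frac{\Ca_M(m s^{q-1})}{s}$ is manifestly stable under this. Hence on overlaps $D(f) \cap D(g) = D(fg)$ the two induced structural maps coincide, and since the restriction maps are Cartier linear the glued map is a genuine map of sheaves $\Fr_* (M|_U) \to M|_U$. The resulting pair is the Cartier module $i^*M$, and functoriality in $M$ is inherited from the functoriality of ordinary restriction together with the naturality of the localized structural maps. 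The conceptual reason the gluing is painless — and a one-stroke alternative — is that $\Fr$ is the identity on the underlying topological space, so $\Fr^{-1}(U) = U$, the Frobenius restricts to $\Fr_U$ on $U$, and evaluating on opens $V \subseteq U$ both $i^* \Fr_* M$ and $\Fr_* i^* M$ return $M(V)$. They are therefore canonically identified, and under this identification the restricted map $i^*\Ca$ is exactly the desired structural map $\Fr_* i^*M \cong i^* \Fr_* M \to i^*M$.

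Finally, coherence is immediate, since the restriction of a coherent $\CO_X$-module to an open subscheme is again coherent and the Cartier structure does not alter the underlying module. I expect no serious obstacle: the only point demanding genuine care is the commutation $i^* \Fr_* \cong \Fr_* i^*$ of pushforward along $\Fr$ with restriction to an open, but this is forced by the fact that Frobenius is a homeomorphism, so the base-change square attached to the open immersion $i$ is degenerate and the isomorphism holds on the nose.
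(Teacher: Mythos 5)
Your proof is correct, but it takes a different route from the paper's. The paper does not glue: it picks local generators $f_1,\ldots,f_k$ of an ideal cutting out $X\setminus U$ and realizes $M|_U$ as the kernel of the first map of the \Cech~complex on the principal opens $\Spec \CO_X[f_i^{-1}]$; each term of that complex is a Cartier module by \autoref{t.CartierLocalizes}, the \Cech~differentials are (sums of) localization maps and hence Cartier linear, and so the kernel inherits a Cartier structure --- no overlap compatibility needs to be checked because the structure is induced on a kernel rather than assembled from local pieces. Your first argument (gluing the localized structures over $D(f)\cap D(g)=D(fg)$ using the explicit formula $\Ca_{S^{-1}M}(\tfrac{m}{s})=\tfrac{\Ca_M(ms^{q-1})}{s}$) accomplishes the same thing with a little more bookkeeping. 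Your ``one-stroke alternative'' is actually the cleanest of the three and is strictly more elementary: since $\Fr$ is the identity on the underlying topological space, $i^{-1}\Fr_* M$ and $\Fr_* i^{-1}M$ literally agree section by section (with matching $\CO_U$-actions, both twisted through $q$\th~powers), so $i^*\Ca$ \emph{is} the structural map $\Fr_*(M|_U)\to M|_U$, and one does not even need \autoref{t.CartierLocalizes}. What the paper's \Cech~formulation buys in exchange is a template that reuses the localization lemma directly and sits well with the affine-local way Cartier modules are handled throughout \autoref{s.Cart}; what your base-change observation buys is functoriality and exactness of $i^*$ for free, which is how the paper later uses $j^*$ in \autoref{t.basicFuncForCrys} anyway.
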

\begin{proof}
Take local generators $(f_1,\ldots,f_k)$ of an ideal defining the complement $X-U$ of $U$. Then $M|_U$ is the kernel of the fist map in the \Cech~complex on the $U_i=\Spec \CO_X[f^{-1}_i]$. Each $M|_{U_i}$ has a natural structure of a Cartier module by \autoref{t.CartierLocalizes}, hence so do the entries of the \Cech~complex. Since the maps in the \Cech~complex are (sums of) localization maps, which are Cartier linear by \autoref{t.CartierLocalizes}, we conclude that the kernel $M|_U=i^*M$ is naturally a Cartier module.
\end{proof}

\begin{prop}
\label{t.ibDefinition}
Let $i: Y \into X$ be a closed subscheme defined by the sheaf of ideals $I$. Let $(M,\Ca)$ be a Cartier module on $X$. Then
\[
    i^\flat(M) \defeq \SHom_{\CO_X}(i_*\CO_Y,M) = M[I] \defeq \{m \in M | Im = 0 \} \text{ (viewed as an $\CO_Y$-module)}
\]
is naturally a Cartier module on $Y$. $i^\flat$ defines a left exact functor from $\Cart(Y)$ to $\Cart(X)$ which preserves coherence.
\end{prop}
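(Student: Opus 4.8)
The plan is to verify the statement in five stages: the module-level identification, quasi-coherence, stability of the Cartier structure, functoriality, and finally left exactness together with preservation of coherence. (Note that since $i^\flat(M)$ is a sheaf on $Y$, the functor in question goes $i^\flat \colon \Cart(X) \to \Cart(Y)$.)

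First I would establish the identification $\SHom_{\CO_X}(i_*\CO_Y, M) = M[I]$ purely as $\CO_X$-modules. Since $i_*\CO_Y = \CO_X/I$ is generated by the image of $1$, an $\CO_X$-linear map $\phi \colon \CO_X/I \to M$ is determined by $\phi(1) =: m$, and such a $\phi$ is well-defined precisely when $I m = 0$, i.e.\ $m \in M[I]$; the assignment $\phi \mapsto \phi(1)$ is the desired natural isomorphism. Both sides are annihilated by $I$, hence are naturally modules over $\CO_X/I = i_*\CO_Y$, so they descend to $\CO_Y$-modules on $Y$. For quasi-coherence I would argue locally: on $\Spec R$ with $I=(f_1,\dots,f_k)$ finitely generated (as $X$ is locally Noetherian), $M[I]$ is the kernel of the $\CO_X$-linear map $M \to M^{\oplus k}$, $m \mapsto (f_1 m, \dots, f_k m)$, and kernels of maps of quasi-coherent sheaves are quasi-coherent.

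The crux is to show that the structural map $\Ca \colon \Fr_*M \to M$ restricts to $M[I]$, and this is where the Frobenius semilinearity interacts with the ideal structure. Given a local section $m \in M[I]$ and any local section $r$ of $I$, semilinearity gives $r\,\Ca(m) = \Ca(r^q m)$; since $I$ is an ideal we have $r^q \in I$, so $r^q m = 0$ and therefore $r\,\Ca(m) = 0$. As $r$ ranges over all sections of $I$, this proves $\Ca(m) \in M[I]$, so $\Ca$ carries $M[I]$ into itself. The induced additive self-map of $M[I]$ is $q^{-1}$-linear over $\CO_Y$: for $\bar r \in \CO_Y$ with lift $r \in \CO_X$ one has $\bar r\,\Ca(m) = r\,\Ca(m) = \Ca(r^q m) = \Ca(\bar r^{\,q} m)$, which is independent of the lift because $I$ annihilates $M[I]$. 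This equips $i^\flat(M)$ with a canonical Cartier structure over $Y$. Functoriality is then formal: a morphism $\phi \colon M \to N$ of Cartier modules is $\CO_X$-linear, hence sends $M[I]$ into $N[I]$, and the compatibility square for $\phi$ restricts to the corresponding square for $i^\flat(\phi)$, so $i^\flat$ is a functor.

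For left exactness I would note that $\SHom_{\CO_X}(i_*\CO_Y, -)$ is left exact in its second argument, equivalently that the passage $M \mapsto M[I]$ to the $I$-torsion is left exact on quasi-coherent sheaves; since kernels of Cartier maps are computed as $\CO_X$-module kernels with their induced structure, this left exactness persists at the level of Cartier modules. Preservation of coherence follows because $X$ is locally Noetherian, so a coherent $M$ is locally a Noetherian module and its submodule $M[I]$ is again finitely generated, hence coherent over $\CO_Y$. I do not expect a genuine obstacle: the only nonformal point is the stability computation above, and it reduces entirely to the elementary fact that an ideal is closed under $q$-th powers, i.e.\ $r \in I \Rightarrow r^q \in I$; everything else is standard sheaf theory combined with the observation that Cartier kernels and cokernels are the underlying $\CO_X$-module kernels and cokernels.
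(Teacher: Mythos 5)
Your proposal is correct and its essential content coincides with the paper's proof: the only nonformal point is the stability $\Ca(M[I])\subseteq M[I]$, which the paper establishes by the same computation $I\Ca(m)=\Ca(I^{[q]}m)\subseteq\Ca(Im)=0$, i.e.\ exactly your observation that $r\in I$ implies $r^q\in I$. The remaining verifications you spell out (the identification with $\SHom_{\CO_X}(i_*\CO_Y,M)$, left exactness, coherence) are the ones the paper dismisses as ``everything follows,'' and you are also right that the functor goes from $\Cart(X)$ to $\Cart(Y)$, correcting a slip in the statement.
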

\begin{proof}
Everything follows once we observe that $\Ca(M[I]) \subseteq M[I]$: If $Im=0$, then $I\Ca(m)=\Ca(I^{[q]}m) \subseteq \Ca(Im) =0$.
\end{proof}

\begin{prop}
\label{t.i*ibEquiv}
Let $i: Y \into X$ be a closed subscheme defined by the sheaf of ideals $I$. Let $M$ (resp.~$N$) a Cartier module on $X$ (resp.~$Y$). Then
\[
    i_*i^\flat M  \cong M[I] \text{ and } i^\flat i_*N  \cong N\, .
\]
\end{prop}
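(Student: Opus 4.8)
The plan is to reduce both isomorphisms to the standard equivalence of categories attached to a closed immersion, and then to check that the two structural maps are carried along correctly. Recall that for a closed immersion $i \colon Y \into X$ with ideal sheaf $I$, the pushforward $i_*$ identifies the quasi-coherent sheaves on $Y$ with the quasi-coherent $\CO_X$-modules killed by $I$ (equivalently, those whose $\CO_X$-structure factors through $\CO_X \onto i_*\CO_Y = \CO_X/I$); "viewing a sheaf killed by $I$ as an $\CO_Y$-module" is precisely the inverse of $i_*$. Under this dictionary both claimed isomorphisms are almost formal at the level of underlying $\CO$-modules, and the only (modest) work is the bookkeeping of the structural maps through the natural identity $i_* \circ \Fr_* \cong \Fr_* \circ i_*$ from \autoref{t.CartierPushforward}.

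First I would treat $i_* i^\flat M \cong M[I]$. By \autoref{t.ibDefinition} the sheaf $i^\flat M = M[I]$ is by construction annihilated by $I$, so it already lies in the essential image of $i_*$, and applying $i_*$ merely re-reads it as the $\CO_X$-submodule $M[I] \subseteq M$. For the Cartier structures: the structural map of $i^\flat M$ is the restriction $\Ca_M|_{M[I]}$, well-defined by the inclusion $\Ca_M(M[I]) \subseteq M[I]$ established in \autoref{t.ibDefinition}; pushing forward along $i_*$ leaves this restriction unchanged under the identification $\Fr_* i_* \cong i_* \Fr_*$; and on the target side $M[I]$ carries exactly this same Cartier submodule structure $\Ca_M|_{M[I]}$. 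Hence the $\CO_X$-linear isomorphism is automatically Cartier linear.

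Second, for $i^\flat i_* N \cong N$ with $N$ a Cartier module on $Y$: since $N$ is an $\CO_Y$-module, its pushforward $i_* N$ is killed by $I$, so $(i_*N)[I] = i_* N$, and therefore $i^\flat i_* N = (i_*N)[I] = i_* N$, which, read back on $Y$, is precisely $N$. On the Cartier side the structural map of $i_* N$ is $i_*(\Ca_N)$ by \autoref{t.CartierPushforward}; the operation $i^\flat$ restricts it to the full subsheaf $(i_*N)[I] = i_* N$ and so changes nothing; and un-pushing along $i_*$ returns $\Ca_N$. Thus the isomorphism respects the Cartier structure.

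I do not expect a genuine obstacle: the content is the closed-immersion equivalence, which is standard, together with the observation that $i^\flat$ and $i_*$ do nothing to the structural map beyond a (trivial) restriction to a subsheaf killed by $I$ and the canonical commutation of $i_*$ with $\Fr_*$. The one point that will require a little care is to make the identification $\SHom_{\CO_X}(i_*\CO_Y, M) = M[I]$ of \autoref{t.ibDefinition} explicit --- an $\CO_X$-linear map $\CO_X/I \to M$ is determined by the image of $1$, which must lie in $M[I]$ --- and to confirm that this identification is itself compatible with the structural maps, so that the naturality invoked above genuinely holds.
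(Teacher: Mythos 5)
Your proposal is correct and follows essentially the same route as the paper's (much terser) proof: the first isomorphism is immediate from the definition $i^\flat M = M[I]$, and the second from $I\, i_*N = 0$, hence $(i_*N)[I] = i_*N$. The additional bookkeeping you supply for the Cartier structures via $i_* \circ \Fr_* \cong \Fr_* \circ i_*$ is accurate and simply makes explicit what the paper leaves implicit.
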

\begin{proof}
The first equality is clear from the definition, and the second follows since $I i_*N=0$ and hence $(i_*N)[I] = i_*N$.
\end{proof}

\subsection{Nilpotence}
\label{s.nilpotence}
In our treatment of Cartier modules the notion of nilpotence is critical. It means that some power of the structural map acts as zero.
\begin{defn}
\label{d.nilopetence}
A Cartier module $(M,\Ca)$ is called \emph{nilpotent} if $\Ca^e(M)=0$ for some $M$. The smallest $e$ such that $\Ca^e(M)=0$ is called the \emph{order of nilpotence} of $M$ and denoted by $\nilord(M)$. A Cartier module is called \emph{locally nilpotent} if it is the union of its nilpotent Cartier submodules.
\end{defn}
One might expect that local nilpotence for a Cartier module $M$ is equivalent to the annihilation of every section of $M$ by some power of $C$. As the example below shows, this is not the case. However, local nilpotence is equivalent to requesting that for every local section $m \in \CM (U)$ there is $e >0$ such that $\Ca^e(\CO_X(U) \cdot m)=0$, \ie~the whole $\CO_X$-submodule generated by $m$ has to be nilpotent.\footnote{To check the equivalence, let $M_e$ be the subsheaf whose sections $M_e(U)$ are precisely the sections of $M(U)$ such that $\Ca^e(\CO_X(U)\cdot m)=0$. One immediately verifies that each $M_e$ is a Cartier submodule of $M$, which is nilpotent, and any nilpotent Cartier submodule $N$ of $M$, say with $\nilord(M)=e$, is contained in $M_e$; see also the proof of \autoref{t.nil-part}.} The discrepancy between the annihilation of a section and of the sub-sheaf generated by this section is explained by the observation that the kernel of $\Ca$ is generally not an $\CO_X$-submodule of $M$; it is only a $\CO_X$-submodule of $\Fr_*M$. Hence, a Cartier module might have no nilpotent submodules even if $\Ca$ has a nontrivial kernel:
\begin{ex}
Let $\Ca:\Primefield[x] \to[x^i \mapsto x^{((i+1)/q)-1)}] \Primefield[x]$ (where $x^{((i+1)/q)-1)}=0$ whenever the exponent is not integral). Then $\Primefield[x]$ has no submodule on which $\Ca$ is zero and hence none on which $C$ is nilpotent, even though every element of $\Primefield[x]$ is annihilated by some power of $\Ca$.
\end{ex}
In this article we are mainly concerned with \emph{coherent} Cartier modules. In this case the notion of nilpotence and local-nilpotence agree.
\begin{lem}
\label{t.nil-is-locnil-for-coherent}
If $(M,\Ca)$ is coherent then $M$ is nilpotent if and only if $M$ is locally nilpotent.
\end{lem}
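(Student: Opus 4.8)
The plan is to dispose of the trivial implication first and then reduce the converse to the affine Noetherian situation, where the ascending chain condition supplies the uniform bound. The implication ``nilpotent $\Rightarrow$ locally nilpotent'' is immediate: if $\Ca^e(M)=0$ then $M$ is itself a nilpotent Cartier submodule, so $M$ is trivially the union of its nilpotent Cartier submodules.

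For the converse I would work with the ascending family of Cartier submodules $M_e$ introduced in the footnote preceding the statement, where $M_e(U)=\{\,m\in M(U)\mid \Ca^e(\CO_X(U)\cdot m)=0\,\}$. As indicated there, each $M_e$ is a Cartier submodule satisfying $\Ca^e(M_e)=0$, and these form an ascending chain $M_1\subseteq M_2\subseteq\cdots$; the point is that $\Ca$ is $q^{-e}$-linear, so the condition $\Ca^e(\CO_X\cdot m)=0$ is preserved under multiplication by sections and under applying $\Ca$. Local nilpotence says precisely that every local section of $M$ lies in some $M_e$, that is, $M=\bigcup_{e}M_e$. The task is then to show this exhausting chain stabilizes, for then $M=M_e$ for some $e$ and hence $\Ca^e(M)=0$, which is exactly nilpotence with $\nilord(M)\le e$. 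Since nilpotence of a prescribed order can be tested on an affine cover, I would reduce to $X=\Spec R$ with $R$ Noetherian; there $M$ is a finitely generated, hence Noetherian, $R$-module, so the chain $M_1\subseteq M_2\subseteq\cdots$ of submodules stabilizes after finitely many steps, and combined with $M=\bigcup_e M_e$ this forces $M=M_e$ for $e\gg0$.

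The one step needing care, and the main obstacle, is globalizing the order of nilpotence: the argument just sketched produces on each chart of an affine cover a bound $e_i$ with $\Ca^{e_i}=0$, and one must pass to a single $e$ valid on all of $X$. This is where quasi-compactness of (the relevant part of) $X$ enters, allowing one to take $e=\max_i e_i$ over a finite affine cover. Apart from this, the argument is routine: the only genuine content is the verification that the locally defined $M_e$ are Cartier submodules together with the Noetherian hypothesis that turns the exhausting union $M=\bigcup_e M_e$ into a finite one.
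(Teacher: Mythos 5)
Your argument is correct and follows essentially the same route as the paper: both proofs convert local nilpotence into an exhausting ascending family of nilpotent Cartier submodules (the paper via the observation that $N_1+N_2$ is nilpotent of order $\max\{e_1,e_2\}$, you via the cofinal chain $M_1\subseteq M_2\subseteq\cdots$ from the footnote, which carries the same content) and then invoke Noetherianness of $M$ to terminate it. Your explicit passage to a finite affine cover to get a single order of nilpotence is exactly the quasi-compactness point the paper leaves implicit in the phrase ``it now easily follows.''
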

\begin{proof}
If a Cartier sheaf is nilpotent, it clearly is locally nilpotent. For the converse, observe that if $N_1$ and $N_2$ are nilpotent Cartier subsheaves of $M$ of nilpotency oders $e_1$ and $e_2$, respectively, then $N_1+N_2$ is nilpotent of order $\max\{e_1,e_2\}$. Using the noetherianess of $M$, it now easily follows that locally nilpotent implies nilpotent.
\end{proof}
\begin{lem}
\label{t.nil_localizes}
A coherent Cartier module $M$ is nilpotent if and only if for all (closed) points $x \in X$ the localization $M_x$ is nilpotent.
\end{lem}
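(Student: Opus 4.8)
The plan is to dispatch the forward (``only if'') implication immediately and concentrate on the converse. If $C^e(M)=0$ globally, then since localization is an exact functor on Cartier modules whose effect on the structural map is recorded in \autoref{t.CartierLocalizes}, we get $(C_x)^e(M_x)=(C^e(M))_x=0$ for every $x\in X$, so each $M_x$ is nilpotent.

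For the converse the essential difficulty is \emph{uniformity}: the pointwise hypothesis only supplies, for each closed point $x$, an exponent $e_x$ with $(C_x)^{e_x}(M_x)=0$, with no a priori bound on the $e_x$. I would extract one global exponent from the descending chain of images. Because $M$ is coherent and $X$ is Noetherian, each image $C^i(M)$ is a quasi-coherent subsheaf of the coherent sheaf $M$, hence coherent, and it is a Cartier submodule since $C(C^i M)=C^{i+1}M\subseteq C^i M$. Thus we obtain a descending chain $M\supseteq C(M)\supseteq C^2(M)\supseteq\cdots$ of coherent Cartier submodules, and nilpotence of $M$ is exactly the assertion that some term of this chain vanishes.

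The key step is to test vanishing on supports. Set $Z_i=\supp(C^i(M))$; this is a descending chain of closed subsets of the Noetherian space $X$, so it stabilizes, say $Z_i=Z$ for all $i\geq e_0$. I claim $Z=\emptyset$, which gives $C^{e_0}(M)=0$ and finishes the proof. Suppose instead $Z\neq\emptyset$. Then the nonempty closed subset $Z$ of the Noetherian scheme $X$ contains a point $z$ that is closed in $Z$; as $Z$ is closed in $X$, the point $z$ is then closed in $X$ as well. By hypothesis $M_z$ is nilpotent, and using again that localization commutes with the Cartier structure and with images (\autoref{t.CartierLocalizes}) we get $(C^e(M))_z=(C_z)^e(M_z)=0$ for $e$ large. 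But $(C^e(M))_z=0$ means $z\notin\supp(C^e(M))=Z$ for such $e$, contradicting $z\in Z$. Hence $Z=\emptyset$, as claimed.

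The only genuine subtlety I anticipate is the passage from ``locally Noetherian'' to a \emph{stabilizing} chain of closed sets and to the existence of a closed point inside $Z$; both are unproblematic once $\supp(M)$ is treated as a Noetherian (quasi-compact) space, so I would either assume $X$ Noetherian from the start or reduce to an affine neighborhood, exactly as the Noetherianity of $M$ is invoked in \autoref{t.nil-is-locnil-for-coherent}. Everything else is formal, resting on the compatibility of localization with the structural map recorded in \autoref{t.CartierLocalizes} together with the exactness of localization.
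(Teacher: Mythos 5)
Your proof is correct, but it packages the Noetherian input differently from the paper, so it is worth comparing the two. The paper's proof goes point by point: if $(C^{e})_x=0$ then, because $C^{e}(M)$ is a coherent subsheaf of $M$ with closed support, $C^{e}$ vanishes on an entire open neighbourhood of $x$; finitely many such neighbourhoods cover the quasi-compact $X$, and the maximum of the finitely many exponents kills $M$ globally. You instead run the descending chain $Z_i=\supp(C^i(M))$ of closed subsets, let it stabilize to $Z$, and rule out $Z\neq\emptyset$ by producing a point of $Z$ that is closed in $X$ and applying the hypothesis there. The two arguments are close cousins: your $Z_i$ are exactly the complements of the paper's neighbourhoods, and the descending chain condition on closed sets versus quasi-compactness of an open cover are the same Noetherian fact in two guises. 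What your version makes explicit, and the paper leaves implicit, is the small extra fact that a nonempty closed subset of a Noetherian scheme contains a point closed in $X$ --- needed because the hypothesis is only assumed at \emph{closed} points (the paper's covering step needs the dual observation that the open complements of the supports, taken over closed points only, still cover $X$). Your route also has the mild advantage of previewing the support-chain technique that the paper deploys in earnest in \autoref{t.ImagesStabilize}. Both proofs share the same implicit reduction from ``locally Noetherian'' to Noetherian, which you rightly flag, and both rest on the compatibility of images of $C$ with localization from \autoref{t.CartierLocalizes}; so the proposal is sound as written.
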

\begin{proof}
If $M$ is nilpotent then so are all of its localizations. Conversely, if for $x \in X$ the map $C^e$ is zero, then $C^e$ is zero on some open neighborhood of $x$. Covering $X$ by finitely many such neighborhoods (for various (closed) points $x \in X$) and taking the maximum of the appearing $e$'s it follows that $C^e(M)=0$ as claimed.
\end{proof}
\begin{lem}
\label{t.NilSerreSub}
The category of nilpotent Cartier modules on $X$ is a Serre subcategory of the category of coherent Cartier modules on $X$, \ie~it is an abelian subcategory which is closed under extensions and subquotients.
\end{lem}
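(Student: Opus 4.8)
The plan is to check directly the conditions defining a Serre subcategory: that the nilpotent Cartier modules form a full subcategory of $\CohCart(X)$ closed under passing to Cartier submodules, to quotient Cartier modules, and under extensions. Fullness is automatic, since a morphism between nilpotent Cartier modules is nothing but a morphism of the underlying Cartier modules. The argument throughout amounts to tracking the order of nilpotence $\nilord$ under these operations; the only step requiring a small idea is the extension case. Granting closure under sub- and quotient objects, the subcategory is in particular closed under subquotients, and since kernels and cokernels of maps of Cartier modules are computed as the kernel and cokernel of the underlying $\CO_X$-modules equipped with the induced structural map, these again lie in the subcategory; hence the inclusion is exact and the subcategory is abelian.

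For submodules and quotients I would simply observe that $\nilord$ cannot increase. If $N \subseteq M$ is a Cartier submodule, its structural map is the restriction of $\Ca_M$, so $\Ca^e(N) \subseteq \Ca^e(M)$ and thus $\nilord(N) \leq \nilord(M)$. Dually, if $Q = M/N$ is a quotient with induced structural map $\bar\Ca$, then $\bar\Ca^e(\bar m) = \overline{\Ca^e(m)}$, so $\Ca^e(M) = 0$ forces $\bar\Ca^e(Q) = 0$ and $\nilord(Q) \leq \nilord(M)$. In both cases nilpotence of $M$ passes to $N$ and to $Q$.

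The main point is stability under extensions, where I expect the genuine (though mild) obstacle to lie. Consider a short exact sequence $0 \to M' \to M \to[\pi] M'' \to 0$ of Cartier modules with $M'$ and $M''$ nilpotent, and set $e' = \nilord(M')$, $e'' = \nilord(M'')$. For a local section $m$ of $M$, Cartier linearity of $\pi$ gives $\pi(\Ca^{e''}(m)) = \bar\Ca^{e''}(\pi(m)) = 0$, so $\Ca^{e''}(m)$ lies in the Cartier submodule $M'$; applying $\Ca^{e'}$ and using $\Ca^{e'}(M') = 0$ yields $\Ca^{e'+e''}(m) = 0$. Hence $\Ca^{e'+e''}(M) = 0$, so $M$ is nilpotent with $\nilord(M) \leq \nilord(M') + \nilord(M'')$. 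Note that here $M''$ sits inside $M$ only as a quotient, which is why one obtains the sum rather than the maximum appearing in the analogous estimate for a sum of two nilpotent submodules used in the proof of \autoref{t.nil-is-locnil-for-coherent}. This establishes all three closure properties and completes the verification that nilpotent Cartier modules form a Serre subcategory.
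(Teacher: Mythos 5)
Your proof is correct and follows essentially the same route as the paper: the sub/quotient cases are handled by the trivial observation that $\nilord$ does not increase, and the extension case is settled by first applying $\Ca^{e''}$ to push a section into the nilpotent kernel $M'$ and then applying $\Ca^{e'}$, yielding the bound $\nilord(M)\leq \nilord(M')+\nilord(M'')$ that the paper also records after its proof. No gaps.
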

\begin{proof}
Everything is clear except the closedness under extensions. For this we consider a short exact sequence $0\to N' \to N \to[\pi] N''\to0$ and assume that both $N'$ and $N''$ are nilpotent, \ie, there are  $e$ and $f$ such that $\Ca_{N'}^{e}=\Ca_{N''}^{f}=0$. Given an arbitrary $n \in N$ we have $\pi(C^f(n))=C^f(\pi(n))=0$ since $N''$ is nilpotent of order $\leq f$. Hence, $C^f(n)$ is in the kernel of $\pi$ which is equal to $N'$. Since $N'$ is nilpotent of order $e$ it follows that $C^e(C^f(n))=0$. Since $e$ and $f$ do not depend on $n \in N$ we have $C^{e+f}(N)=0$, so $N$ is nilpotent.
\end{proof}
The proof shows that $\max\{\nilord(N'),\nilord(N'')\} \leq \nilord(N) \leq \nilord(N') + \nilord(N'')$.

The following simple observation shows that each Cartier module has a largest (locally) nilpotent submodule.
\begin{prop}
\label{t.nil-part}
Let $(M,\Ca)$ be a coherent Cartier module. Then there exists a unique Cartier submodule $M_{\nil}$ such that
\begin{enumerate}
\item $M_{\nil}$ is nilpotent.
\item $\overline{M} \defeq M/M_{\nil}$ contains no non-zero nilpotent Cartier submodule.
\end{enumerate}
\end{prop}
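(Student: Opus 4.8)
The plan is to let $M_{\nil}$ be the largest nilpotent Cartier submodule of $M$ and then to check that it satisfies (a) and (b). First I would establish that such a largest submodule exists. The set of nilpotent Cartier submodules of $M$ is nonempty, and, as observed in the proof of \autoref{t.nil-is-locnil-for-coherent}, the sum of two nilpotent Cartier submodules is again nilpotent. Since $M$ is coherent over a locally Noetherian scheme, the ascending chain condition supplies a maximal nilpotent Cartier submodule $M_{\nil}$; for any nilpotent submodule $N$, the sum $N+M_{\nil}$ is nilpotent and contains $M_{\nil}$, so $N+M_{\nil}=M_{\nil}$ and hence $N\subseteq M_{\nil}$. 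Thus $M_{\nil}$ is the unique largest nilpotent Cartier submodule, which is exactly (a); concretely it coincides with the locally nilpotent part $\bigcup_e M_e$ of the footnote, which is nilpotent precisely because $M$ is coherent (\autoref{t.nil-is-locnil-for-coherent}).

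For (b), suppose $\overline{N}\subseteq\overline{M}=M/M_{\nil}$ were a nonzero nilpotent Cartier submodule, and let $N\subseteq M$ be its preimage under the quotient map $\pi:M\to\overline{M}$. Then $N$ fits into a short exact sequence $0\to M_{\nil}\to N\to\overline{N}\to0$ whose outer terms are nilpotent. By the closure of nilpotent Cartier modules under extensions (\autoref{t.NilSerreSub}), $N$ is nilpotent, so $N\subseteq M_{\nil}$ by maximality; but then $\overline{N}=\pi(N)=0$, contradicting the choice of $\overline{N}$. Hence $\overline{M}$ contains no nonzero nilpotent Cartier submodule.

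For uniqueness, suppose $M'$ is any Cartier submodule satisfying (a) and (b). Since $M'$ is nilpotent, $M'\subseteq M_{\nil}$ by maximality. The image of $M_{\nil}$ in $M/M'$ is a homomorphic image of a nilpotent module, hence nilpotent (\autoref{t.NilSerreSub}), and therefore vanishes by property (b) for $M'$; this forces $M_{\nil}\subseteq M'$, so $M'=M_{\nil}$.

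The argument is essentially formal once the Serre-subcategory properties of \autoref{t.NilSerreSub} are available, so I do not expect a genuine obstacle. The only place where coherence is really used is in producing the largest nilpotent submodule, where it is needed both to invoke the ascending chain condition and to pass from local nilpotence back to nilpotence; everything after that is diagram chasing in the abelian category of coherent Cartier modules.
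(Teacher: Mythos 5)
Your proof is correct and follows essentially the same route as the paper: both constructions produce the largest nilpotent Cartier submodule (the paper sums all locally nilpotent subsheaves and invokes \autoref{t.nil-is-locnil-for-coherent}, you use the ACC plus the fact that sums of nilpotent submodules are nilpotent), and both deduce (b) from closure under extensions exactly as in \autoref{t.NilSerreSub}. Your explicit uniqueness argument is a welcome addition that the paper leaves implicit.
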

\begin{proof}
Define $M_{\nil}$ as the sum over all locally nilpotent Cartier sub-sheaves of $M$. By the proof of \autoref{t.nil-is-locnil-for-coherent} this is a locally nilpotent Cartier subsheaf of $M$. Since $M$ is noetherian, \autoref{t.nil-is-locnil-for-coherent} asserts that $M_{\nil}$ is nilpotent.  Suppose now that $N/M_{\nil}$ is a nilpotent Cartier subsheaf of $M/M_{\nil}$ for some $N\supset M_{\nil}$. Then by \autoref{t.NilSerreSub}, the Cartier submodule $N$ of $M$ is nilpotent. By the definition of $M_{\nil}$ we have $N\subset M_{\nil}$. Hence $M_{\nil}$ satisfies both properties asserted in the proposition.
\end{proof}

\begin{rem}
If the scheme $X$ is $F$-finite, it is shown in \cite{BliBoe.CartierCrys} that one obtains analogs of the above statements for \emph{locally nilpotent} Cartier modules. Namely we show that locally nilpotent Cartier modules form a Serre subcategory, and that a quasi-coherent Cartier module $M$ has a maximal locally nilpotent submodule $M_{\nil}$ such that the quotient $M/M_{\nil}$ has no nilpotent submodules. One can construct an example which shows that both statements fail if $X$ is not $F$-finite. These generalizations are extremely useful in the construction of certain functors on the category of Cartier modules carried out in \cite{BliBoe.CartierCrys}. However, since we do not need this added generality here, we only included the basic nilpotent case.
\end{rem}

The following observation is at the heart of what is to follow. It roughly says that up to nilpotence one can replace a coherent Cartier module by one with a surjective structural map. For a Cartier module, to have a surjective structural map is equivalent to not having nilpotent Cartier quotients. If $X=\Spec R$ is affine and $R$ is local and $\Fr$-finite, this statement is dual to a condition in \cite{HaSp} on the nilpotence of locally nilpotent co-finite modules with a \emph{left} Frobenius action. A global version of it in the regular $\Fr$-finite case can be found in \cite{Bli.MinimalGamma}. The following slick proof is due to Gabber \cite{Gabber.tStruc}. Note that we do not assume $X$ to be $\Fr$-finite.
\begin{prop}
\label{t.ImagesStabilize}
Let $(M,\Ca)$ be a coherent Cartier module on $X$. Then the descending sequence of images $\Ca^i(M)$
\[
    M \supseteq \Ca(M) \supseteq \Ca^2(M) \supseteq \ldots
\]
stabilizes.
\end{prop}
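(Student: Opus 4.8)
The plan is to run a Noetherian induction on $\dim\supp M$, with the \emph{contracting} behaviour of a $q^{-1}$-linear map as the engine. First I would record that each $\Ca^i(M)$ is a \emph{coherent Cartier submodule} of $M$: the relation $r\Ca(m)=\Ca(r^qm)$ shows that the image of a $q^{-1}$-linear map is an $\CO_X$-submodule (unlike the kernel, which is only an $\Fr_*\CO_X$-submodule), and it is $\Ca$-stable since $\Ca(\Ca^i(M))=\Ca^{i+1}(M)\subseteq\Ca^i(M)$. Since the equality $\Ca^i(M)=\Ca^{i+1}(M)$ may be tested on stalks and the formation of images commutes with localization (\autoref{t.CartierLocalizes}), I would reduce to $X=\Spec R$ with $R$ Noetherian; then the underlying space is Noetherian and the closed subsets of $\supp M$ satisfy the descending chain condition.

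Let $Z=\supp M$ and $d=\dim Z$. When $d=0$ the module $M$ has finite length, so the \emph{nested} submodules $\Ca^i(M)$ have non-increasing length, and containment together with equal length forces $\Ca^i(M)=\Ca^{i+1}(M)$ eventually. (It is essential here to compare the $\Ca^i(M)$ directly rather than to count the successive quotients, since $\Ca$ is only Frobenius-semilinear and need not preserve length.) For the inductive step, localizing at the finitely many generic points $\eta_j$ of the $d$-dimensional components of $Z$ gives finite-length modules $M_{\eta_j}$ on which the chain stabilizes by the $d=0$ case. Choosing $N$ uniformly and replacing $M$ by $\Ca^N(M)$ (legitimate, as $\Ca^i(\Ca^N M)=\Ca^{N+i}(M)$), I may assume $\Ca$ is surjective at every $\eta_j$. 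Then every successive quotient $Q_i:=\Ca^i(M)/\Ca^{i+1}(M)$ is supported on the proper closed subset $W:=\supp\bigl(M/\Ca(M)\bigr)\subsetneq Z$ with $\dim W<d$, the supports being nested ($\supp Q_{i+1}\subseteq\supp Q_i$) because $\Fr$ is the identity on the underlying space.

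The heart of the matter is the contracting estimate: for any ideal $\mathfrak a$ and any Cartier submodule $N$ one has $\Ca(\mathfrak a^{[q]}N)=\mathfrak a\,\Ca(N)$, while $\mathfrak a^{[q]}\subseteq\mathfrak a^{q}$. Writing $I$ for the radical ideal of $W$ and choosing $s$ with $I^{s}M\subseteq\Ca(M)$, I would apply $\Ca$ repeatedly: if $I^{e}\,\Ca^i(M)\subseteq\Ca^{i+1}(M)$ then $I^{[q]\lceil e/q\rceil}\Ca^i(M)\subseteq I^{e}\Ca^i(M)\subseteq\Ca^{i+1}(M)$, and applying $\Ca$ gives $I^{\lceil e/q\rceil}\Ca^{i+1}(M)\subseteq\Ca^{i+2}(M)$. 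Thus the exponent is replaced by $\lceil e/q\rceil$ at each step and collapses to $1$; for $i\gg0$ one obtains $I\cdot Q_i=0$, so the successive quotients are eventually annihilated by the \emph{radical} ideal of $W$. After a further truncation I may assume $I\cdot Q_i=0$ for all $i$, so that the residual part of the chain is controlled by a Cartier module supported on the lower-dimensional subscheme $W$, and the inductive hypothesis finishes the argument.

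The hard part will be exactly this last descent. Turning ``each $Q_i$ is killed by $I$'' into an honest chain of Cartier modules on $W$ is delicate: the cumulative quotients $M/\Ca^i(M)$ are only annihilated by the growing power $I^{i}$, and $IM$ need not be $\Ca$-stable, so one must pass to the smallest Cartier submodule containing $IM$ and verify that the comparison of chains is faithful — all the while contending with the failure of Frobenius-semilinear surjections to preserve length. This is the technical core of Gabber's argument, and the conceptual reason it succeeds is the contracting property of $\Ca$ (as emphasized by Anderson): each application of $\Ca$ divides the relevant $I$-adic exponents by $q$, which is what forces the descending chain to terminate rather than merely to decelerate.
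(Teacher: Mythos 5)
Your reductions, the base case, and the contracting computation $I^{e}\Ca^{i}(M)\subseteq\Ca^{i+1}(M)\Rightarrow I^{\lceil e/q\rceil}\Ca^{i+1}(M)\subseteq\Ca^{i+2}(M)$ are all correct, but the proof stops exactly where it has to deliver: you end with ``$I\cdot Q_i=0$ for all $i\gg0$'' and then assert that the chain is now ``controlled by a Cartier module supported on $W$.'' As you yourself observe, this does not follow. Knowing that each successive quotient is killed by $I$ only gives $I^{a}M\subseteq\Ca^{a}(M)$, a growing power, so no fixed submodule of $M$ is trapped inside all the images; consequently $M/\Ca^{a}(M)$ is not a chain in any single coherent sheaf on $W$, and the smallest Cartier submodule $M'$ containing $IM$ need not lie in $\Ca^{a}(M)$ for $a\geq2$ (one only gets $M'\subseteq\Ca(M)$). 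Comparing your chain with the chain $\Ca^{a}(M/M')$ then still leaves the uncontrolled chain $\Ca^{a}(M)\cap M'$, which is not an image chain, so the inductive hypothesis does not apply to it. This is a genuine gap, not a routine verification.

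The missing idea is the squaring trick, which is the actual engine of the paper's argument. From $IM\subseteq\Ca(M)$ and $q\geq2$ one gets, for $x\in I$, that $x^{2}M\subseteq x\Ca(M)=\Ca(x^{q}M)\subseteq\Ca(x^{2}M)$, and iterating gives $x^{2}M\subseteq\Ca^{a}(M)$ for \emph{every} $a$; summing over a generating set of $b$ elements yields a \emph{fixed} power with $I^{b+1}M\subseteq(I)^{[2]}M\subseteq\Ca^{a}(M)$ for all $a$. That fixed submodule is what makes any descent work: in your setup one could now take $M''$ to be the smallest Cartier submodule containing $I^{b+1}M$, note $M''\subseteq\Ca^{a}(M)$ for all $a$ because each $\Ca^{a}(M)$ is a Cartier submodule containing $I^{b+1}M$, and apply the inductive hypothesis to $\Ca^{a}(M/M'')=\Ca^{a}(M)/M''$ on the lower-dimensional subscheme $V(I^{b+1})$. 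The paper itself avoids the induction on dimension altogether: it shows the loci $Y_n=\supp Q_n$ form a descending chain of closed sets, hence stabilize to some $Y$, assumes $Y\neq\emptyset$, localizes at a generic point $\frm$ of $Y$ so that $\frm^{k}M\subseteq\Ca(M)$, runs the squaring trick with $x\in\frm^{k}$ to get $\frm^{k(b+1)}M\subseteq\Ca^{a}(M)$ for all $a$, and concludes by the finite length of $M/\frm^{k(b+1)}M$ over the local ring --- a contradiction. Either route requires the step you omitted; your $\lceil e/q\rceil$ iteration alone cannot replace it.
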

\begin{proof}
We can and will assume that $X=\Spec R$ is affine. Note that $\Ca^i(M)$ is not only an $R$-submodule of $M$ (since $r\Ca^i(m)=\Ca^i(r^q m)$), but even a coherent Cartier submodule of $M$. Furthermore, $\Ca(S^{-1}M)=\Ca(S^{-q}M)=S^{-1}\Ca(M)$ hence the formation of the image of the structural map $\Ca$ commutes with localization. Let
\[
    Y_n=\supp(\Ca^n(M)/\Ca^{n+1}(M))=\{ x \in \Spec R | \Ca(\Ca^{n-1}(M))_x \neq \Ca^{n-1}(M_x) \}
\]
be the closed subset of $X$ where the $n$\th step of the chain is \emph{not} an equality. The second equality in the displayed equation uses the fact that the formation of the image of $C$ commutes with localization as explained above. If $x \not\in Y_n$, then $\Ca(\Ca^{n-1}(M_x)) = \Ca^{n-1}(M_x)$ and applying $C$ yields $\Ca(\Ca^{n}(M_x)) = \Ca^n(M_x)$ such that $x \not\in Y_{n+1}$. Hence $\{Y_n\}_{n\geq 0}$ forms a descending sequence of closed subsets of $X$. Since $X$ is Noetherian, and hence compact, this sequence must stabilize. Hence there exists $n\geq 0$ such that for all $m \geq n$ we have $Y_n=Y_m(\defeq Y)$. By replacing $M$ by $\Ca^n(M)$ we may assume that for all $n$ we have $Y=\supp(\Ca^n(M)/\Ca^{n+1}(M))$.

The statement that the chain $\Ca^n(M)$ stabilizes means precisely that $Y$ is empty. So let us assume otherwise and let $\frm$ be a generic point of $Y$ (\ie~the generic point of an irreducible component $Z$ of $Y$). Localizing at $\frm$ (and replacing $X$ by $\Spec R_{\frm}$ and $M$ by $M_{\frm}$) we may hence assume that $X=\Spec R$, where $(R,\frm)$ is a local ring and $\supp(\Ca^n(M)/\Ca^{n+1}(M))=\{\frm\}$ for all $n$. In particular, for $n=0$ we get that there is $k > 0$ such that $\frm^k M \subseteq C(M)$. Then for $x \in \frm^k$
\[
    x^2M \subseteq x\frm^k M \subseteq x \Ca(M) = \Ca(x^pM) \subseteq \Ca(x^2M)
\]
and by iterating we get for all $x\in \frm^k$ and all $a \in \mathbb{N}$ that $x^2M \subseteq \Ca^a(M)$. Hence
\[
    \frm^{k(b+1)}M \subseteq (\frm^k)^{[2]}M \subseteq \Ca^a(M) \text{ for all } a \in \mathbb{N}
\]
where $b$ is the number of generators of $\frm^k$ and $(\frm^k)^{[2]}$ is the ideal generated by the squares of the elements of $\frm^k$ (and it is easy to check that there is an inclusion $\frm^{k(b+1)} \subseteq (\frm^k)^{[2]}$). Hence the chain $\Ca^a(M)$ stabilizes if and only if the chain $\Ca^a(M)/\frm^{k(b+1)}M$ does. But this is a chain in $M/\frm^{k(b+1)}M$, which has finite length. This contradicts our assumption that $\supp(\Ca^n(M)/\Ca^{n+1}(M)) \neq \emptyset$ for all~$n$.
\end{proof}

\begin{cor}
\label{t.QuotSurjStruct}
Let $(M,\Ca)$ be a coherent Cartier module. Then there is a unique Cartier subsheaf $(\underline{M},\underline{C})$ such that
\begin{enumerate}
\item the quotient $M/\underline{M}$ is nilpotent, and
\item the structural map $\underline{\Ca}: \underline{M} \to \underline{M}$ is surjective (equiv. $\underline{M}$ has no non-zero nilpotent quotient).
\end{enumerate}
\end{cor}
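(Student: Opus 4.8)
The plan is to realize $\underline{M}$ as the stabilized image of the structural map. By \autoref{t.ImagesStabilize} the descending chain $M \supseteq \Ca(M) \supseteq \Ca^2(M) \supseteq \ldots$ stabilizes, so there is an $n \geq 0$ with $\Ca^n(M) = \Ca^{n+1}(M) = \ldots$, and I would set $\underline{M} \defeq \Ca^n(M)$ with $\underline{\Ca}$ the restriction of $\Ca$. As already observed inside the proof of \autoref{t.ImagesStabilize}, each $\Ca^i(M)$ is a coherent Cartier submodule of $M$, so $\underline{M}$ is a legitimate candidate and no extra work is needed to see that it is a Cartier module.

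For property (ii), stabilization gives $\Ca(\underline{M}) = \Ca(\Ca^n(M)) = \Ca^{n+1}(M) = \Ca^n(M) = \underline{M}$, so $\underline{\Ca}$ is surjective. I would then settle the parenthetical equivalence separately: if $\underline{\Ca}$ is surjective, so is the induced structural map on any Cartier quotient $Q$ of $\underline{M}$ (the quotient map is Cartier linear and surjective), and a coherent Cartier module with surjective structural map cannot be nonzero and nilpotent since $\Ca_Q^e(Q) = Q$ for every $e$; conversely, if $\underline{M}$ has no nonzero nilpotent quotient, then applying this to $\underline{M}/\Ca(\underline{M})$, on which the induced structural map is visibly zero, forces $\Ca(\underline{M}) = \underline{M}$. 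For property (i), the equality $\Ca^n(M) = \underline{M}$ says that the $n$-fold iterate of the induced structural map on $M/\underline{M}$ vanishes, so $M/\underline{M}$ is nilpotent of order $\leq n$.

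The real content is uniqueness, and this is where I would concentrate. Suppose $(\underline{M}',\underline{\Ca}')$ also satisfies (i) and (ii). From surjectivity of $\underline{\Ca}'$ I get $\Ca^i(\underline{M}') = \underline{M}'$ for all $i$, and since $\underline{M}' \subseteq M$ this yields $\underline{M}' = \Ca^i(\underline{M}') \subseteq \Ca^i(M)$ for every $i$; taking $i$ in the stable range gives $\underline{M}' \subseteq \underline{M}$. Conversely, nilpotence of $M/\underline{M}'$, say of order $n'$, gives $\Ca^{n'}(M) \subseteq \underline{M}'$, and choosing $i \geq n'$ in the stable range we obtain $\underline{M} = \Ca^i(M) \subseteq \Ca^{n'}(M) \subseteq \underline{M}'$. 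The two inclusions force $\underline{M} = \underline{M}'$. The same sandwich shows that every subsheaf satisfying (i) and (ii) coincides with the stabilized image, which is the cleanest way to obtain both uniqueness and a concrete description of $\underline{M}$.

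The main obstacle I anticipate is essentially bookkeeping rather than a deep idea: keeping the stabilization index $n$ and the nilpotency order $n'$ straight so that both inclusions can be read off within a common stable range, and checking throughout that every map in sight -- the restriction of $\Ca$, the induced maps on quotients, and the localization-compatible formation of images used in \autoref{t.ImagesStabilize} -- is genuinely Cartier linear, so that the submodules and quotients involved are Cartier modules and not merely $\CO_X$-modules.
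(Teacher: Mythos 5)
Your proposal is correct and follows exactly the paper's route: the paper also defines $\underline{M}$ as the stable image $\Ca^e(M)$ for $e \gg 0$ furnished by \autoref{t.ImagesStabilize} and simply asserts it has the desired properties. Your write-up merely makes explicit the verification of (i), (ii), and the uniqueness sandwich that the paper leaves to the reader, and all of those details check out.
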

\begin{proof}
The stable image $\underline{M}\defeq \Ca^e(M)$ for $e \gg 0$ which exists by \autoref{t.ImagesStabilize} has all the desired properties.
\end{proof}
\begin{rem}
The canonically assigned modules $\underline{M}$ and $\overline{M}$ may also be characterized by the following conditions, more resembling a universal property.
\begin{enumerate}
\item $\underline{M}$ is the smallest Cartier submodule $N$ of $M$ such that the quotient $M/N$ is nilpotent.
\item $\overline{M}$ is the smallest  Cartier quotient $N$ of $M$ such that the kernel of $M \onto N$ is nilpotent.
\end{enumerate}
\end{rem}

\subsection{Duality for finite morphisms}
\label{s.dualityFinite}
In this section we briefly recall some parts of the duality theory for finite morphisms as explained in \cite[Chapter III.6]{HartshorneRD}. We want to apply this in particular to the Frobenius morphism $\Fr$, hence we shall assume now that $\Fr$ is a finite map, \ie~we assume that $X$ is an $\Fr$-finite scheme. One consequence of this will be an interpretation of the Cartier module structure, \ie~the map $\Ca: \Fr_*M \to M$, via its adjoint under the duality of the finite Frobenius map, \ie~a map $\kappa: M \to \Fr^\flat M \cong \SHom_{\CO_X}(\Fr_*\CO_X,M)$. Here, and from here onward, we denote by $\SHom_{\CO_X}(M,N)$ the sheaf of local homomorphisms from $M$ to $N$, \ie~the sheaf associated to the presheaf
$$U\mapsto 
\Hom_{\CO_X(U)}(M(U),N(U)).$$

Let $f: Y \to X$ be a finite morphism of (locally Noetherian) schemes. Then the functor
\[
    f^\flat: \CO_X\text{--modules} \to \CO_Y\text{--modules}
\]
is defined by $f^\flat M= f^{-1}\SHom_{\CO_X}(f_*\CO_Y,M)$ viewed as an $\CO_Y$-module. Locally, for $Y=\Spec S$ and $X=\Spec R$, both affine, it is just given by $f^\flat M=\SHom_R(S,M)$ viewed as an $S$-module via its action on the first entry of the $\Hom$. As one easily checks, in the case of a closed embedding $i: Y \into X$ this definition agrees with $i^\flat$ given above in \autoref{t.ibDefinition}. In the case that $f: Y \to X$ is \'etale, one has that $f^\flat \cong f^*$. Furthermore $(\usc)^\flat$ is compatible with composition.

The duality for a finite morphism \cite[Theorem III.6.7]{HartshorneRD} states that the functor $f^\flat$ is a right adjoint to the functor $f_*$. More precisely
\begin{thm}[Duality for a finite morphism]
\label{t.dualityfinite}
Let $f: Y \to X$ be a finite morphism of locally Noetherian schemes. Then the trace map $\Tr_f: f_*f^\flat M \to M$ (given by evaluation at $1$) induces an isomorphism
\[
    f_*\SHom_{\CO_Y}(M,f^\flat N) \to[\cong] \SHom_{\CO_X}(f_*M,N)
\]
for every quasi-coherent $\CO_Y$-module $M$, and $\CO_X$-module $N$.
\end{thm}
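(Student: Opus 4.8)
The plan is to reduce the statement to a purely ring-theoretic adjunction and then upgrade it to sheaves. Since $f$ is finite, it is in particular affine, and the assertion that $\Tr_f$ induces an isomorphism of sheaves may be checked locally on $X$. So I would cover $X$ by affine opens $\Spec R$ and write $f^{-1}(\Spec R)=\Spec S$ with $S$ a finite $R$-algebra; over such an open $f_*$ becomes restriction of scalars $\Res^S_R$ along $R\to S$, and $f^\flat N=\Hom_R(S,N)$ carries the $S$-module structure coming from its first argument, via $(s\cdot g)(t)=g(ts)$.

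The heart of the argument is then the classical adjunction between restriction and coinduction of scalars: a natural isomorphism
\[
    \Hom_S\bigl(M,\Hom_R(S,N)\bigr) \to[\cong] \Hom_R\bigl(\Res^S_R M,N\bigr).
\]
I would construct it explicitly by sending $\phi$ to the map $m\mapsto \phi(m)(1)$, with inverse sending $\psi$ to $m\mapsto \bigl(s\mapsto \psi(sm)\bigr)$. The routine verifications are that the first formula indeed produces an $R$-linear map, that the second produces an $S$-linear map, and that the two assignments are mutually inverse; all three follow directly from the $S$-action formula above together with $R$-linearity. Crucially, the forward map is nothing but postcomposition with evaluation at $1$, which is exactly how $\Tr_f$ is defined, so this isomorphism is precisely the map appearing in the statement.

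Finally I would pass from this module-level isomorphism to the claimed isomorphism of sheaf-Homs. Because $\SHom$ is the sheafification of the presheaf $U\mapsto \Hom_{\CO_X(U)}(f_*M(U),N(U))$ and sheafification is determined on the basis of affine opens, it suffices that the adjunction be natural for the restriction maps $R\to R_h$ (equivalently $S\to S_h$). Since both the forward map $\phi\mapsto\bigl(m\mapsto\phi(m)(1)\bigr)$ and its inverse are given by formulas that visibly commute with localization, the presheaf isomorphism descends to an isomorphism of the two sheaves over each $\Spec R$, and naturality lets these glue over the affine cover.

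The step I expect to be the main obstacle is precisely this last sheaf-theoretic passage, for the subtle reason that $\SHom$ of quasi-coherent sheaves need not be quasi-coherent; one therefore cannot simply identify global sections with module homomorphisms and must instead argue at the level of the defining presheaves on affines, where the localization-compatibility of the explicit formulas does the work.
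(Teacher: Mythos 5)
Your proposal is correct and follows essentially the same route as the paper, which likewise reduces to the affine case of a finite ring map $R\to S$ and invokes the explicit adjunction $\Hom_S(M,\Hom_R(S,N))\cong\Hom_R(M,N)$ given by $\phi\mapsto(m\mapsto\phi(m)(1))$ with the same inverse formula you write down (the paper simply cites \cite[Theorem III.6.7]{HartshorneRD} and records this affine translation rather than spelling out the localization-compatibility and gluing). Your extra care about the sheaf-theoretic passage is sound but not a point the paper dwells on.
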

In the affine case of a finite homomorphism of rings $R \to S$, an $R$-module $N$, and $S$-module $M$ (where everything can be reduced to) this comes down to nothing more than the well-known isomorphism
\[
    \Hom_S(M,\Hom_R(S,N)) \to[\cong] \Hom_R(M,N)
\]
given by sending $\phi$ to the map $m \mapsto (\phi(m))(1)$ (whose inverse is the map sending $\psi$ to the map $m \mapsto (s \mapsto  \psi(sm))$. From this we obtain the following characterization of Cartier modules:
\begin{prop}
\label{t.CartAdjoint}
Let $X$ be $\Fr$-finite and $M$ a quasi-coherent $\CO_X$-module. Then a Cartier module structure on $X$ is equivalently given by one of the following:
\begin{enumerate}
    \item A right $\CO_X[\Fr]$-module structure on $M$, compatible with the $\CO_X$-structure.
    \item An $\CO_X$-linear map $\Ca: \Fr_*M \to M$.
    \item An $\CO_X$-linear map $\Cadj: M \to \Fr^\flat M$.
\end{enumerate}
\end{prop}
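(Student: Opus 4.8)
The plan is to separate the three‑way equivalence into the purely definitional equivalence of (a) and (b) and the substantive equivalence of (b) and (c), the latter being a direct application of the finite duality \autoref{t.dualityfinite} to the Frobenius. The equivalence of (a) and (b) requires no work beyond the opening definition: a right $\CO_X[\Fr]$-module structure compatible with the $\CO_X$-action is, by the very construction of $\CO_X[\Fr]$, the same datum as a $q^{-1}$-linear map $\Ca$, which in turn is the same as an $\CO_X$-linear map $\Ca\colon \Fr_*M \to M$. So I would record this in one line and concentrate on (b) $\Leftrightarrow$ (c).

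For the passage between $\Ca\colon \Fr_*M \to M$ and $\Cadj\colon M \to \Fr^\flat M$, I would apply \autoref{t.dualityfinite} to the morphism $f=\Fr$, which is finite precisely because $X$ is $\Fr$-finite, taking both the $\CO_Y$-module and the $\CO_X$-module appearing there to be $M$. This yields an isomorphism of sheaves $\Fr_*\SHom_{\CO_X}(M,\Fr^\flat M)\cong \SHom_{\CO_X}(\Fr_*M,M)$. Since $\Fr$ is the identity on the underlying topological space, $\Fr_*$ leaves global sections unchanged, so passing to global sections gives a natural bijection $\Hom_{\CO_X}(M,\Fr^\flat M)\cong \Hom_{\CO_X}(\Fr_*M,M)$. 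Under this bijection the map $\Ca$ of (b) corresponds to the map $\Cadj$ of (c); as neither side imposes any condition on $M$ beyond $\CO_X$-linearity, the three kinds of data determine one another.

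To make the correspondence concrete, and to orient the reader, I would exhibit it in the affine model $R\xrightarrow{\Fr}R$, $r\mapsto r^q$, using the explicit $\Hom$-adjunction recalled just after \autoref{t.dualityfinite}: the adjoint of $\Ca$ is $\Cadj(m)=(s\mapsto \Ca(sm))$, while $\Ca$ is recovered from $\Cadj$ by evaluation at $1$, i.e.\ $\Ca=\Tr_\Fr\circ\Fr_*\Cadj$. The only real subtlety, and the point I would be most careful about, is the bookkeeping of the module structures across the adjunction: because the Frobenius is an endomorphism, the source and target copies of $X$ coincide as topological spaces yet differ in their $\CO_X$-action (one being twisted by $\Fr$), and one must keep these apart in order to see that the twist on $\Fr_*M$ on the left matches the twist built into $\Fr^\flat M$ on the right. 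Once this is tracked correctly, the statement is a formal consequence of finite duality.
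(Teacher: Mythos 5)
Your proposal matches the paper's own argument: the paper likewise dismisses (a)$\Leftrightarrow$(b) as the definitional observation made when $\CO_X[\Fr]$ was introduced, and obtains (b)$\Leftrightarrow$(c) by applying the duality for the finite morphism $\Fr$ (\autoref{t.dualityfinite}), with the same explicit formulas $\Cadj(m)=(s\mapsto\Ca(sm))$ and recovery of $\Ca$ by evaluation at $1$. The only content the paper adds beyond this is the remark that the adjoint of $\Ca^i$ is the iterated map $\kappa^i=\Fr^\flat(\kappa^{i-1})\circ\kappa$, which is not needed for the proposition itself.
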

The equivalence of the first two items was already observed above, and the equivalence of the second and third item is the just discussed adjointness. One easily (but tediously) verifies that the adjoint map to $\Ca^i$ is the map $\kappa^i$ defined inductively via $\kappa^1=\kappa$ and $\kappa^i=\Fr^\flat(\kappa^{i-1})\circ \kappa = \Fr^{(i-1)\flat}\kappa \circ \kappa^{i-1}$. As a corollary we get:
\begin{cor}
\label{t.nilkappa}
Let $X$ be $\Fr$-finite, and $(M,\Ca,\kappa)$ a Cartier module on $X$. Then the kernel of $\kappa^i$ is the maximal nilpotent Cartier submodule of order $\leq i$, and $M_{\nil}=\bigcup_i \ker \kappa^i$.
\end{cor}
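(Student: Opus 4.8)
The plan is to give a concrete description of $\ker \kappa^i$ as a subsheaf of $M$ and then match it against the maximal nilpotent submodule of bounded order already isolated in the footnote following \autoref{d.nilopetence}. The key computation is to unwind the adjunction of \autoref{t.dualityfinite} applied to the finite morphism $\Fr^i$. Since $\kappa^i$ is the adjoint of $\Ca^i\colon \Fr^i_*M \to M$, the explicit inverse displayed after \autoref{t.dualityfinite} sends $\Ca^i$ to the map carrying a local section $m$ to the homomorphism $(s \mapsto \Ca^i(sm)) \in \Fr^{i\flat}M = \SHom_{\CO_X}(\Fr^i_*\CO_X, M)$. Consequently $\kappa^i(m)=0$ if and only if $\Ca^i(sm)=0$ for every local section $s$ of $\CO_X$, that is, if and only if $\Ca^i$ annihilates the entire $\CO_X$-submodule $\CO_X\cdot m$ generated by $m$. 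Thus $\ker\kappa^i$ is exactly the sheaf whose sections are those $m$ with $\Ca^i(\CO_X\cdot m)=0$.

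With this description in hand, the first assertion is immediate. Writing $M_i \defeq \{m : \Ca^i(\CO_X\cdot m)=0\}$, the discussion in the footnote following \autoref{d.nilopetence} (and the argument in the proof of \autoref{t.nil-part}) shows that $M_i$ is a Cartier submodule; it is nilpotent of order $\leq i$ because $m \in \CO_X\cdot m$ forces $\Ca^i(m)=0$; and any Cartier submodule $N$ with $\Ca^i(N)=0$ satisfies $\CO_X\cdot m \subseteq N$ for each of its sections $m$, hence $\Ca^i(\CO_X\cdot m)=0$ and $N \subseteq M_i$. Therefore $\ker\kappa^i = M_i$ is the maximal nilpotent Cartier submodule of order $\leq i$. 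I would stress here the subtlety already flagged in \autoref{s.nilpotence}: the naive kernel of $\Ca$ is only an $\CO_X$-submodule of $\Fr_*M$ and need not be a Cartier submodule, whereas passing to the adjoint $\kappa$ automatically imposes the condition on the whole generated submodule, which is precisely what makes $\ker\kappa^i$ a genuine Cartier submodule.

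For the second assertion, observe that $\ker\kappa^i \subseteq \ker\kappa^{i+1}$, since order $\leq i$ implies order $\leq i+1$, so $\bigcup_i \ker\kappa^i$ is an ascending union of Cartier submodules. Because $M$ is coherent and $X$ is Noetherian, this chain stabilises at some $\ker\kappa^{i_0}$, so the union equals $\ker\kappa^{i_0}$, which is nilpotent of order $\leq i_0$; by \autoref{t.nil-part} it is contained in $M_{\nil}$. Conversely, $M_{\nil}$ is nilpotent of some order $e$ by \autoref{t.nil-is-locnil-for-coherent}, so $\Ca^e(M_{\nil})=0$ exhibits it as a nilpotent Cartier submodule of order $\leq e$, whence $M_{\nil} \subseteq \ker\kappa^e \subseteq \bigcup_i \ker\kappa^i$. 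The two inclusions give $M_{\nil}=\bigcup_i \ker\kappa^i$.

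The main obstacle is the first step: correctly identifying the adjoint of $\Ca^i$ through the twisted $\CO_X$-module structures on $\Fr^i_*M$ and on $\Fr^{i\flat}M$, and confirming that it coincides with the iterate $\kappa^i$ defined before \autoref{t.nilkappa}. Once the formula $\kappa^i(m) = (s \mapsto \Ca^i(sm))$ is secured, everything else is formal bookkeeping with nilpotence orders together with the Noetherian stabilisation of an ascending chain.
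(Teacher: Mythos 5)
Your argument is correct and rests on the same pivot as the paper's own proof, namely the adjointness of $\Ca^i$ and $\kappa^i$ under duality for the finite Frobenius: you unwind the adjunction to the explicit formula $\kappa^i(m)=(s\mapsto\Ca^i(sm))$ and match $\ker\kappa^i$ with the submodule $M_i$ from the footnote, whereas the paper argues slightly more formally that $\ker\kappa^i$ is a Cartier submodule via $\kappa(\ker\kappa^i)\subseteq\Fr^\flat(\ker\kappa^{i-1})$ and then invokes adjointness once to translate ``$\kappa^i=0$'' into ``$\Ca^i=0$''. The remaining bookkeeping for $M_{\nil}=\bigcup_i\ker\kappa^i$ is handled the same way in both.
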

\begin{proof}
Clearly, the $K_i=\ker \kappa^i_M$ form an increasing sequence of $\CO_X$-submodule. Since $\kappa(K_i) \subseteq \Fr^\flat(K_{i-1}) \subseteq \Fr^\flat(K_i)$, it follows that $K_i$ is a Cartier submodule of $M$. So $K_i$ is clearly the largest Cartier submodule of $M$ on which $\kappa^i$ acts as zero. But $\kappa_i$ being zero is equivalent to $\Ca^i$ being zero since $\kappa^i$ and $\Ca^i$ are adjoint morphisms.
\end{proof}

\subsection{Dualizing complexes and the Cartier isomorphism}
\label{s.dualizingsheaf}

We recall some parts of Residues and Duality \cite{HartshorneRD} on the existence and uniqueness of dualizing complexes. A \emph{dualizing complex} $\omega_X^\bullet$ is a bounded complex of finite injective dimension such that for all complexes of sheaves with coherent cohomology $M^\bullet$ on $X$, the natural double-dualizing map
\[
    M^\bullet \to \RSHom_{\CO_X}(\RSHom_{\CO_X}(M^\bullet,\omega_X^\bullet),\omega_X^\bullet)
\]
is an isomorphism. It is shown in \cite[Prop.~V.2.1]{HartshorneRD}  that this condition on $\omega_X^\bullet$ is equivalent to the validity of the same condition for all {\em bounded} complexes of sheaves with coherent cohomology $M^\bullet$. We only consider noetherian schemes of finite dimension. Then dualizing complexes exist in many circumstances, \cf~\cite[V.10]{HartshorneRD}:x
\begin{enumerate}
\item If $X$ is regular, or more generally if $X$ is Gorenstein, then $\CO_X$ itself is a dualizing complex.
\item If $X$ is essentially of finite type over a scheme that has a dualizing complex, then $X$ has a dualizing complex.
\item In particular, if $X$ is of finite type over a field, or over a Gorenstein local ring, then $X$ has a dualizing complex.
\end{enumerate}
As one readily verifies from the definition of a dualizing complex, if $\omega_X^\bullet$ is dualizing, then so is $\omega_X^\bullet \tensor \mathcal{L}[n]$ for any integer $n$ and any invertible sheaf $\mathcal{L}$ on $X$. By \cite[Theorem V.3.1]{HartshorneRD} this is all that can happen, \ie~any two dualizing complexes on $X$ differ only by a shift and tensorization with an invertible sheaf. If $X$ is Cohen-Macaulay, then a dualizing complex has cohomology concentrated in a single degree. More generally, if $X$ is normal, then we denote by the \emph{dualizing sheaf} $\omega_X$ the unique reflexive sheaf on $X$ which agrees with the dualizing complex $\omega_X^\bullet$ on the Cohen-Macaulay locus. Hence, given two dualizing complexes $\omega^\bullet_X$ and $\omega^{\prime\bullet}_X$, on sufficiently small Zariski open subsets $U \subseteq X$ (trivializing $\mathcal{L}$), the restrictions of $\omega^\bullet_X$ and $\omega^{\prime\bullet}_X[n]$ to $U$ are isomorphic. Since in a normal scheme the non-Cohen-Macaulay locus has codimension $\geq 3$, this isomorphism extends to an isomorphism of the corresponding dualizing sheaves. In particular, if $X$ is the spectrum of a local ring, then dualizing complexes are unique (up to shift), and so are dualizing sheaves.

One is interested in the behavior of dualizing complexes under pullback by a (finite) morphism $f: Y \to X$. More precisely, we consider the functor
\[
    f^!M^\bullet = \RSHom_{\CO_X}(f_*\CO_Y,M^\bullet)
\]
on the derived category of complexes of sheaves on $X$ with bounded coherent cohomology.\footnote{generally, this functor is denoted by $f^\flat$, however, we have reserved $f^\flat$ for the zeroth cohomology of this functor throughout the rest of this paper, see \autoref{s.dualityFinite}} By \cite[Proposition V.2.24]{HartshorneRD}, if $\omega_X^\bullet$ is a dualizing complex on $X$, then $f^!\omega_X^\bullet$ is dualizing on $Y$, hence if one has fixed dualizing complexes $\omega_X^\bullet$ on $X$, and $\omega_Y^\bullet$ on $Y$, then one has Zariski locally an isomorphism $\omega_Y^\bullet \cong f^!\omega_X^\bullet[n]$.
One of the main points of duality theory in \cite{HartshorneRD} is that in many cases this last statement also holds globally. That is every essentially of finite type scheme $X$ over some fixed scheme $S$ which has a dualizing complex $\omega_S^\bullet$, can be equipped with a dualizing complex $\omega_X^\bullet$ (namely $\omega_X^\bullet=\eta^! \omega_X^\bullet$, where $\eta: X \to S$ is the morphism exhibiting the $S$-structure of $X$), such that if $f: Y \to X$ is a morphism of $S$-schemes, then $f^!(\omega^\bullet_X) \cong \omega^\bullet_Y$. Applying this to the Frobenius we obtain:
\begin{prop}
\label{t.CartierFiniteOverGor}
Let $X$ be $F$-finite scheme over a local Gorenstein scheme $S=\Spec R$. Then there is a dualizing complex $\omega_X^\bullet$ such that $\omega_X^\bullet \cong \Fr^!\omega_X^\bullet$. If $X$ is also normal, one obtains an isomorphism $\omega_X \cong \Fr^\flat \omega_X$ of dualizing sheaves.
\end{prop}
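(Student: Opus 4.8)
The plan is to manufacture the required dualizing complex from the base $S$ via the twisted inverse image and then to reduce Frobenius-invariance to the analogous statement downstairs on $S$. Since $S=\Spec R$ is local and Gorenstein, $\CO_S$ is a dualizing complex, so I set $\omega_S^\bullet\defeq\CO_S$ and $\omega_X^\bullet\defeq\eta^!\omega_S^\bullet$, where $\eta\colon X\to S$ is the structure morphism; by the results of \cite{HartshorneRD} recalled above this is a dualizing complex on $X$. The essential difficulty is that $\Fr$ is \emph{not} a morphism of $S$-schemes: it only fits into the commutative square $\eta\circ\Fr=\Fr_S\circ\eta$ lying over the Frobenius $\Fr_S$ of the base, so one cannot invoke the $S$-functoriality of $\omega_X^\bullet$ directly. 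Instead, using compatibility of $(\usc)^!$ with composition, I would compute
\[
  \Fr^!\omega_X^\bullet=\Fr^!\eta^!\omega_S^\bullet\cong(\eta\circ\Fr)^!\omega_S^\bullet=(\Fr_S\circ\eta)^!\omega_S^\bullet\cong\eta^!\bigl(\Fr_S^!\omega_S^\bullet\bigr),
\]
which reduces the first assertion to the single claim $\Fr_S^!\omega_S^\bullet\cong\omega_S^\bullet$ on the base.

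To settle this base case, note that $S$ being $F$-finite makes $\Fr_S\colon S\to S$ a finite morphism, so $\Fr_S^!\omega_S^\bullet$ is again a dualizing complex on the \emph{local} scheme $S$; as dualizing complexes on a local scheme are unique up to a shift, we get $\Fr_S^!\omega_S^\bullet\cong\omega_S^\bullet[n]$ for some integer $n$. The one genuinely delicate point of the whole argument is to check that $n=0$. Here I would argue that $\Fr_S$ is the identity on the underlying topological space and is finite, hence of relative dimension $0$, so it preserves the codimension function that normalizes the dualizing complex; thus no shift occurs. Feeding $\Fr_S^!\omega_S^\bullet\cong\omega_S^\bullet$ back into the displayed chain yields $\Fr^!\omega_X^\bullet\cong\omega_X^\bullet$, which is the first statement.

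For the second statement assume $X$ is moreover normal, and restrict the isomorphism $\omega_X^\bullet\cong\Fr^!\omega_X^\bullet$ to the Cohen--Macaulay locus $U$ (which is $\Fr$-stable, $\Fr$ being a homeomorphism). On $U$ the dualizing complex sits in a single degree, namely $\omega_X|_U$ up to a fixed shift, and $\Fr_*\CO_U$ is a maximal Cohen--Macaulay $\CO_U$-module; therefore the higher cohomology of $\RSHom_{\CO_U}(\Fr_*\CO_U,\omega_X|_U)$ vanishes and $\Fr^!\omega_X|_U$ reduces to its zeroth cohomology $\Fr^\flat\omega_X|_U=\SHom_{\CO_U}(\Fr_*\CO_U,\omega_X|_U)$. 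Hence the restricted isomorphism reads $\omega_X|_U\cong\Fr^\flat\omega_X|_U$. It remains to extend this across $X$: both $\omega_X$ and $\Fr^\flat\omega_X=\SHom_{\CO_X}(\Fr_*\CO_X,\omega_X)$ are reflexive --- the latter because a sheaf-$\SHom$ into a reflexive sheaf on a normal scheme is again reflexive --- and $X\setminus U$, being the non-Cohen--Macaulay locus, has codimension $\geq 3$. Since a reflexive sheaf is recovered as the pushforward of its restriction to any open set whose complement has codimension $\geq 2$, the isomorphism on $U$ extends uniquely to the desired isomorphism $\omega_X\cong\Fr^\flat\omega_X$. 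The crux of the proof is thus the vanishing of the shift $n$ in the base case; the reflexive extension in the normal case is routine once the Cohen--Macaulay-locus comparison is in place.
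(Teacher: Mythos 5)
Your proof is correct and follows essentially the same route as the paper: set $\omega_X^\bullet=\eta^!\CO_S$, use pseudofunctoriality of $(\usc)^!$ together with $\Fr_S^!\CO_S\cong\CO_S$ on the local Gorenstein base, and extend from the Cohen--Macaulay locus by reflexivity in the normal case. The only (cosmetic) difference is that you apply the composition compatibility directly to the two factorizations of $\eta\circ\Fr=\Fr_S\circ\eta$, where the paper routes the same computation through the relative Frobenius diagram; you also usefully make explicit the two points the paper leaves implicit, namely why no shift occurs in $\Fr_S^!\CO_S\cong\CO_S$ and why the isomorphism of sheaves extends across the non-Cohen--Macaulay locus.
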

\begin{proof}
Since $X$ is an $S$-scheme we have a morphism $\eta: X \to S$. Since $S$ is Gorenstein local, $R=\CO_S$ itself is a dualizing sheaf on $X$. Let $\omega_X^\bullet = \eta^!R$. Since $R$ is local we have an isomorphism $\Fr_S^!R \cong R$. Consider now the relative Frobenius diagram of $X$ over $S$:
\[
\xymatrix{ X \ar@/^{1pc}/[rrd]^{\Fr_X=\Fr}\ar[rd]^{\Fr_{X/S}}\ar@/_{1pc}/[rdd]_{\eta} \\
&  X' \ar[r]^{\Fr^\prime_S}\ar[d]^{\eta'} & X \ar[d]^{\eta} \\
                                    &      S \ar[r]^{\Fr_S}& S
}
\]
Applying $\eta^{\prime !}$ and combining with the identities $\eta^! \cong \Fr_{X/S}^! \circ \eta^{\prime !}$ and $\eta^{\prime !}\circ \Fr_S^! \cong \Fr^{\prime !}\circ \eta^!$ we get
\[
    \omega_X^\bullet \cong \eta^!R \cong \Fr_{X/S}^!\eta{\prime !}R \cong \Fr_{X/S}^! \eta^{\prime !}\Fr_S^!R \cong \Fr_{X/S}^! \Fr_S^{\prime !}\eta^!R \cong \Fr_X^!\omega_X^\bullet.
\]
Since the non-Cohen-Macaulay locus in a normal scheme has codimension $\geq 3$, the induced isomorphism $\omega_X \cong \Fr^!\omega_X$ on the Cohen-Macaulay locus extends to all of $X$.
\end{proof}
We point out that the proof in fact shows that \autoref{t.CartierFiniteOverGor} holds whenever the base scheme $S$ has a dualizing complex $\omega^\bullet_S$ such that $\Fr^!\omega^\bullet_S \cong \omega^\bullet_S$.

We need to extend this result slightly to a case that is not directly stated in \cite{HartshorneRD}.
\begin{prop}
Let $X$ be $\Fr$-finite and affine. Then $X$ has a dualizing complex $\omega_X^\bullet$. Furthermore, for all sufficiently small affine open subsets $U \subseteq X$, one has (quasi-)isomorphisms of dualizing complexes
\[
    \omega_X^\bullet|_U = \omega_U^\bullet \cong \Fr^!\omega_U^\bullet
\]
which, in the case that $X$ is also normal, induce isomorphisms $\omega_U \cong \Fr^\flat\omega_U$ of dualizing sheaves.
\end{prop}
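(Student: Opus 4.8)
The plan is to obtain the dualizing complex from the general existence theory, to pin down its behaviour under the (finite) Frobenius on small opens by the uniqueness of dualizing complexes, and finally to pass from complexes to sheaves in the normal case exactly as at the end of \autoref{t.CartierFiniteOverGor}.

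First I would settle existence. Since $X=\Spec R$ is $\Fr$-finite, $R$ is excellent by \cite{Kunz.NoethCharP}, and by \cite{Gabber.tStruc} (whose affine case is precisely what is at issue here) it carries a dualizing complex $\omega_X^\bullet$. Concretely one realises $R$ as a quotient of a regular $\Fr$-finite ring --- for instance, after completion, via Cohen's structure theorem, the power series ring over the $\Fr$-finite residue field being regular and $\Fr$-finite --- and then invokes that a regular ring is its own dualizing complex together with the stability of dualizing complexes under finite (quotient) maps. Fix such an $\omega_X^\bullet$. Because dualizing complexes restrict to dualizing complexes along localizations (\cite[V.2]{HartshorneRD}), its restriction to any affine open $U\subseteq X$ is a dualizing complex $\omega_U^\bullet=\omega_X^\bullet|_U$, which is the first asserted (quasi-)isomorphism.

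Next I would analyse $\Fr^!\omega_X^\bullet=\RSHom_{\CO_X}(\Fr_*\CO_X,\omega_X^\bullet)$. As $\Fr$ is finite, $\Fr^!\omega_X^\bullet$ is again a dualizing complex on $X$ by \cite[Proposition V.2.24]{HartshorneRD}. By the uniqueness of dualizing complexes \cite[Theorem V.3.1]{HartshorneRD} there are an invertible sheaf $\mathcal{L}$ and a (locally constant) integer $n$ with $\Fr^!\omega_X^\bullet\cong\omega_X^\bullet\otimes\mathcal{L}[n]$. Here $n=0$: the shift is read off from the codimension function attached to a dualizing complex, and since $\Fr$ is a finite morphism inducing the identity on the underlying topological space, it preserves this function. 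Restricting to a sufficiently small affine open $U$ on which the line bundle $\mathcal{L}$ becomes trivial, and using that open restriction commutes with $\Fr^!$ (so that $(\Fr^!\omega_X^\bullet)|_U\cong\Fr^!\omega_U^\bullet$), we obtain the desired isomorphism $\Fr^!\omega_U^\bullet\cong\omega_U^\bullet$.

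Finally, for $X$ normal I would descend from complexes to the dualizing sheaf exactly as at the end of the proof of \autoref{t.CartierFiniteOverGor}: on the Cohen--Macaulay locus the dualizing complex is concentrated in a single degree, so the isomorphism above restricts there to an isomorphism of sheaves $\omega_U\cong\Fr^\flat\omega_U$; both sheaves are reflexive, and since the non-Cohen--Macaulay locus of the normal scheme $U$ has codimension $\geq 3$, this isomorphism extends uniquely over that locus to all of $U$. The main obstacle is the passage from ``unique up to shift and twist'' to an honest isomorphism, that is, verifying $n=0$ via the codimension function (using that Frobenius is a homeomorphism) and shrinking $U$ to trivialize $\mathcal{L}$; the existence statement itself rests on the nontrivial input of \cite{Gabber.tStruc}, while everything beyond it is the formal duality bookkeeping of \cite{HartshorneRD} together with the reflexive-extension argument already used for \autoref{t.CartierFiniteOverGor}.
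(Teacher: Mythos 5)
Your proof follows essentially the same route as the paper's: existence is delegated to Gabber's \cite[Remark 13.6]{Gabber.tStruc}, the local isomorphism $\omega_U^\bullet\cong\Fr^!\omega_U^\bullet$ comes from the uniqueness of dualizing complexes up to shift and twist (with the shift vanishing because $\Fr$ has relative dimension zero, which your codimension-function remark makes explicit) after shrinking $U$ to trivialize the invertible sheaf, and the passage to dualizing sheaves is the reflexive-extension argument over the codimension~$\geq 3$ non-Cohen--Macaulay locus already used for \autoref{t.CartierFiniteOverGor}. The only inaccuracy is your parenthetical sketch of Gabber's existence argument via completion and Cohen's structure theorem --- a dualizing complex on $\widehat R$ does not descend to $R$ --- but this is inessential since the existence step rests on the citation itself, exactly as in the paper.
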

\begin{proof}
The existence of a dualizing complex for any noetherian, affine, $F$-finite scheme is shown in \cite[Remark 13.6]{Gabber.tStruc}. The local isomorphisms exist due to the uniqueness of dualizing complexes discussed above (the shift is irrelevant since $\Fr$ has relative dimension zero). The final statement about the dualizing sheaves follows formally.
\end{proof}
Summarizing, we see that if $X$ is normal and $\Fr$-finite and either (a) $X$ is essentially of finite type over a Gorenstein local ring, or (b) $X$ is sufficiently affine, then the there is a dualizing sheaf $\omega_X$ such that $\omega_X \cong \Fr^\flat \omega_X$; in other words $\omega_X$ carries the structure of a Cartier module.
\begin{rem}
An interesting question seems to be the existence of a dualizing complex for an arbitrary $F$-finite scheme $X$. Gabber states in \cite[Remark 13.6]{Gabber.tStruc} that this is the case but only provides a proof in the case that $X$ is affine. Even more interestingly one might ask if any $\Fr$-finite scheme $X$ has a dualizing complex such that $\omega_X^\bullet \cong \Fr^! \omega_X^\bullet$ globally. This is not clear to us, even in the case that $X$ is regular. This would of course imply, that on an $\Fr$-finite scheme, there is a dualizing sheaf equipped with a Cartier module structure $\Fr_* \omega_X \to \omega_X$ such that its adjoint $\omega_X \to \Fr^\flat \omega_X$ is an isomorphism.
\end{rem}
We finish by pointing out an explicit example of the above general construction, namely the classical Cartier operator \cite{Cartier57} on a smooth and finite type over a perfect field scheme.

\begin{ex}
For simplicity let $X=\Spec k[x_1,\ldots,x_n]$ and $k=\Primefield_p$. Let
\[
    \Omega_X = k[x_1,\ldots,x_n]\langle dx_1,\ldots,dx_n \rangle
\]
be the module of Kähler differentials and $\Omega^\bullet_X$ the resulting de~Rham complex. The \emph{inverse Cartier operator} is the unique map
\[
    C^{-1} \colon \Omega_X^1 \to H^1(\Fr_*\Omega_X^\bullet)
\]
which for a local section $x \in \CO_X$ sends $dx$ to the class of $x^{p-1}dx$ (or, for easier memorization it sends $x^i \frac{dx}{x} \to x^{pi} \frac{dx}{x}$). The Cartier isomorphism states that the induced map $\Omega_X^i \to H^i(\Fr_*\Omega_X^\bullet)$ is an isomorphism, which is checked by an explicit calculation, see \cite{DI}. In particular, on top differential forms $\omega_X = \Omega^n_X$ one obtains the \emph{Cartier operator}
\[
    C \colon \Fr_*\omega_X \to \omega_X
\]
as the composition of the natural surjection $\Fr_*\omega_X \to H^n(\Fr_*\Omega^\bullet_X)$ and the inverse of the inverse Cartier operator $C^{-1} \colon \omega_X \to[\cong] H^n(\Fr_*\Omega^\bullet_X)$. Explicitly, this map is given by
\[
\textstyle{
    x_1^{k_1}\cdot\ldots\cdot x_n^{k_n}(\frac{dx_1}{x_1}\wedge \ldots \wedge \frac{dx_n}{x_n}) \mapsto x_1^{k_1/p}\cdot\ldots\cdot x_n^{k_n/p}(\frac{dx_1}{x_1}\wedge \ldots \wedge \frac{dx_n}{x_n})}
\]
where we set $x_j^{k_j/p}=0$ if $k_j/p$ is not integral. By \autoref{t.dualityfinite}, the adjoint of the Cartier operator $C: \Fr_* \omega_X \to \omega_X$ is the $\CO_X$--linear map
\[
    \kappa: \omega_X \to \Fr^\flat \omega_X.
\]
It can easily check by hand that $\kappa: \omega_X \to \Fr^! \omega_X$ is an isomorphism.

More generally, for any regular scheme $X$, essentially of finite type over a perfect field $k$, one has, as above, a canonical isomorphism $\Omega_{X}^i \to \mathcal{H}^i(\Fr_*\Omega_{X}^\bullet)$ which induces the \emph{Cartier operator} $C: \Fr_{*}\omega_{X} \mapsto \omega_{X}$. If in addition $X/k$ is normal, and denoting by $\omega_X$ the unique reflexive sheaf on $X$ which agrees with the top differential forms on the regular locus $X_{reg}$, then the Cartier operator on the smooth locus induces a Cartier linear map $C: \Fr_*\omega_X \to \omega_X$.
\end{ex}

\subsection{Cartier modules over $F$-finite fields}
\label{s.CartierFields}

In this section we investigate the case of $X=\Spec k$ where $k$ is a field. That is we study vector spaces over $k$ equipped with a $q^{-1}$-linear endomorphism, or equivalently, with a right action of $k[\Fr]$. The point we make here is that if $[k:k^q]<\infty$ then duality $\Hom_k(\blank,k)$ can be used to reduce the study of \emph{right} $k[\Fr]$ vector-spaces to that of $k$-vectors-spaces with a \emph{left} $k[\Fr]$-action. This is slightly more involved than one might suspect, owing to the just mentioned subtlety concerning the uniqueness of dualizing modules. The problem is, that there is no canonical identification of a $k$-vector-space with its $k$-dual.

We assume that the field $k$ is $\Fr$-finite, that is, $\Fr_*k$ is a finite dimensional $k$-vector-space. Then its $k$-dual $(\Fr_*k)^\vee=\Hom_k(\Fr_*k,k)$ is (non-canonically) isomorphic to $\Fr_*k$ as a $k$-vectorspace. But $\Hom_k(\Fr_*k,k)$ is also a $\Fr_*k$-vector-space via the action on the first factor of the $\Hom$, and, for dimension reasons, necessarily one-dimensional as a $\Fr_*k$-vector-space. As fields, $\Fr_*k$ is simply isomorphic to $k$. We write $\Fr^\flat k$ for the one dimensional $k \cong \Fr_* k$ vector-space $(\Fr_* k)^\vee$. The choice we make now, is the choice of a generator of $\Fr^\flat k$ as a $k$-vector-space, that is the choice of a $k$-isomorphism $\Fr^\flat k \cong k$. Clearly, two such choices will differ by multiplication with an element in $k^\times$. Since all that will follow depends on the choice of an isomorphism $\Fr^\flat k \cong k$, we will fix one from now on. \autoref{t.dualityfinite}  shows that we have made the choice of a Cartier-module structure on $\Fr_*k \to k$ such that its adjoint is the isomorphism $\Fr^\flat k \cong k$. Also note, that in the case where $k$ is perfect, this problem does not arise since then the Frobenius is an isomorphism $k \cong \Fr_*k$ (canonically).
\begin{lem}
Let $k$ be an $\Fr$-finite field, then there is an isomorphism of functors $\Fr_* \circ (\blank)^\vee \cong (\blank)^\vee \circ \Fr_*$ on $k$-vector-spaces.
\end{lem}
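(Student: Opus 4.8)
The plan is to recognize the desired isomorphism as a special case of duality for the finite morphism $\Fr$. First I would record what the two functors do concretely on a $k$-vector-space $V$. Since $\Fr\colon\Spec k\to\Spec k$ corresponds to the $q$-th power map on $k$, the space $\Fr_* V$ is $V$ with the twisted action $r\cdot v=r^q v$; hence $\Fr_*(V^\vee)$ is $\Hom_k(V,k)$ with $(r\cdot\phi)(v)=r^q\phi(v)$, while $(\Fr_* V)^\vee=\Hom_k(\Fr_* V,k)$ is the space of $q^{-1}$-linear maps $\psi\colon V\to k$ (those with $\psi(r^q v)=r\psi(v)$), carrying the untwisted action $(r\cdot\psi)(v)=r\psi(v)$. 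Written this way the two sides look quite different as naive sets of maps, which already signals that the identification must use the datum $\Fr^\flat k\cong k$ fixed in the preceding paragraph.

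Next I would apply \autoref{t.dualityfinite} to $f=\Fr$ and $N=k$. On $\Spec k$ the sheaf-$\SHom$'s become ordinary $\Hom_k$'s, and the theorem gives a natural isomorphism
\[
    \Fr_*\Hom_k(V,\Fr^\flat k) \to[\cong] \Hom_k(\Fr_* V,k)=(\Fr_* V)^\vee .
\]
Composing with the fixed $k$-isomorphism $\Fr^\flat k\cong k$ identifies $\Hom_k(V,\Fr^\flat k)$ with $\Hom_k(V,k)=V^\vee$, which yields $\Fr_*(V^\vee)\cong(\Fr_* V)^\vee$, the assertion. Unwinding the trace map makes the isomorphism explicit: if $C_k\colon\Fr_* k\to k$ denotes the chosen Cartier map whose adjoint $\Cadj$ realizes $\Fr^\flat k\cong k$, then it sends $\phi\in\Fr_*(V^\vee)$ to the composite $C_k\circ\phi$, which satisfies $(C_k\circ\phi)(r^q v)=C_k(r^q\phi(v))=r\,C_k(\phi(v))$ and so indeed lies in $(\Fr_* V)^\vee$. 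I would then verify — routinely — that $\phi\mapsto C_k\circ\phi$ is $k$-linear for the twisted source and untwisted target actions above, and natural in $V$, naturality being inherited from that of the trace isomorphism in \autoref{t.dualityfinite}.

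The genuine difficulty here is conceptual rather than computational: the identification $\Fr^\flat k\cong k$ is non-canonical — precisely the dualizing-module ambiguity flagged in the text — so the resulting isomorphism of functors depends on the choice of a generator of $\Fr^\flat k$, which must be pinned down once and for all beforehand. If one prefers to check bijectivity by hand rather than quote \autoref{t.dualityfinite}, the key input is that $\Fr^\flat k=\Hom_k(\Fr_* k,k)$ is free of rank one over $\Fr_* k$ on the generator $C_k$: injectivity of $\phi\mapsto C_k\circ\phi$ follows because if $\phi(v_0)=c\neq 0$ then $C_k(\lambda c)=0$ for all $\lambda\in k$ forces $C_k\equiv 0$ (as $\lambda c$ runs over all of $k$), contradicting that $C_k$ generates $\Fr^\flat k$; surjectivity is then immediate from \autoref{t.dualityfinite}, which applies to arbitrary quasi-coherent $M$, or from a dimension count when $V$ is finite-dimensional. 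Either way the content is the clean duality statement, and I expect the only step needing care is the bookkeeping of the two different $k$-actions together with the fixed choice of $\Fr^\flat k\cong k$.
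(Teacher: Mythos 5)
Your argument is correct and is essentially the paper's own proof: both apply duality for the finite morphism (\autoref{t.dualityfinite}) with $f=\Fr$ and $N=k$ to get $\Hom_k(\Fr_*V,k)\cong \Fr_*\Hom_k(V,\Fr^\flat k)$ and then invoke the previously fixed isomorphism $\Fr^\flat k\cong k$. The explicit unwinding via $\phi\mapsto C_k\circ\phi$ and the remark on the $k^\times$-ambiguity of the choice are consistent with, and slightly more detailed than, what the paper writes.
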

\begin{proof}
We just compute for a $k$-vector-space $V$
\[
\Hom_k(\Fr_*V,k) \overset{\ref{t.dualityfinite}}{\cong}  \Fr_*\Hom_k(V,\Fr^\flat k) {\cong} \Fr_*\Hom_k(V,k)
\]
where the first isomorphism is the duality of the finite morphism (in the simple case of a field) in \autoref{t.dualityfinite}, and the second one is induced by our fixed isomorphism $F^\flat k \cong k$. Note that a different choice of such isomorphism only changes everything by multiplication of an element in $k^\times$.
\end{proof}

In the simple case of $k$-vectorspaces considered here we had to fix an isomorphism between the dualizing module $f^\flat k$ and the dualizing module $k$. Once that has been fixed it induces for every finitely generated field extension $i:k \into K$ an isomorphism of the $K$-dualizing modules $(i^\flat k)$ (non-canonically isomorphic to $K$) with $F_K^\flat (i^\flat k)$ via the functoriality of $(\blank)^\flat$. It is in this sense that the constructions that follow are compatible with finitely generated field extensions. In particular for any finite extension field $K$ of $k$ we will from now on denote by $(\blank)^\vee_K$ the functor $\Hom_K(V,i^\flat k)$ and hence get a natural isomorphism $F_{K*}(\blank)^\vee_K \cong (F_{K*}\blank)^\vee_K$ of functors on $K$-vector-spaces.

\begin{prop}\label{t.duality.for.fields}
Let $K/k$ be a finitely generated extension of an $\Fr$-finite field $k$. Then the duality functor $(\blank)^\vee_K$ induces an equivalence between the categories of coherent Cartier modules over $K$ (\ie~finite dimensional $K$-vector-spaces with a \emph{right} action of the Frobenius) and finite dimensional $K$-vector-spaces with a \emph{left} action of the Frobenius. The equivalence $(\blank)^\vee_K$ furthermore preserves nilpotence.
\end{prop}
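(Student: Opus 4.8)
The plan is to realize the stated correspondence by transporting the structural map through the contravariant duality functor $(\blank)^\vee_K=\Hom_K(\blank,i^\flat k)$ together with the natural isomorphism established above for $(\blank)^\vee_K$, namely $\Fr_*\circ(\blank)^\vee_K\cong(\blank)^\vee_K\circ\Fr_*$; I will denote the resulting component isomorphism by $\theta_M\colon(\Fr_*M)^\vee\to[\cong]\Fr_*(M^\vee)$. Since $i^\flat k$ is one-dimensional over $K$, the functor $(\blank)^\vee_K$ is exact and contravariant on finite-dimensional $K$-vector-spaces, its biduality map $\delta_V\colon V\to[\cong] V^{\vee\vee}$ is a natural isomorphism, and in particular $(\blank)^\vee_K$ is faithful. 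First I would record how the two structures match up: a coherent Cartier module is a $K$-linear map $\Ca\colon\Fr_*M\to M$, whereas a left Frobenius action is a $K$-linear map $\psi\colon V\to\Fr_*V$ (the relation $\psi(rv)=r^q\psi(v)$ being exactly $\CO_X$-linearity with target $\Fr_*V$). Dualizing $\Ca$ and composing with $\theta_M$ gives
\[
 M^\vee\to[\Ca^\vee](\Fr_*M)^\vee\to[\theta_M]\Fr_*(M^\vee),
\]
which is precisely a left Frobenius action on $M^\vee$; symmetrically, dualizing a left action $\psi\colon V\to\Fr_*V$ and precomposing with $\theta_V^{-1}$ produces a Cartier structure $\Fr_*(V^\vee)\to V^\vee$ on $V^\vee$. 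No separate check of the semilinearity relations is required, as both $\Ca^\vee$ and $\theta$ are $K$-linear maps with the correct source and target.

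The second step is to see that these two assignments are quasi-inverse, so that the contravariant $(\blank)^\vee_K$ is its own quasi-inverse and yields the asserted (anti-)equivalence. Applying the construction twice to $(M,\Ca)$ produces a Cartier structure on $M^{\vee\vee}$, and I would verify that under biduality $\delta_M$ this agrees with $\Ca$. Expanding $(\theta_M\circ\Ca^\vee)^\vee=\Ca^{\vee\vee}\circ\theta_M^\vee$ and comparing with the precomposition by $\theta_{M^\vee}^{-1}$, the identity reduces to two compatibilities: that $\Ca^{\vee\vee}$ corresponds to $\Ca$ under $\delta$ (naturality of biduality), and that $\theta$ is compatible with biduality, i.e.\ the square relating $\theta_M^\vee$, $\theta_{M^\vee}$ and the biduality maps of $M$ and $\Fr_*M$ commutes. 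The latter is a coherence property of the duality of the finite Frobenius morphism from \autoref{t.dualityfinite}; once it is in hand, exactness and faithfulness of $(\blank)^\vee_K$ promote the pointwise identity to a natural isomorphism of functors in both directions.

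For the preservation of nilpotence, I would use that the $e$-fold iterate of the structural map is the composite
\[
 \Ca^e\colon\Fr^e_*M\to[\Fr^{e-1}_*\Ca]\Fr^{e-1}_*M\to\cdots\to[\Fr_*\Ca]\Fr_*M\to[\Ca]M,
\]
and dually that the $e$-fold iterate of a left action $\psi$ is the analogous composite $V\to\Fr^e_*V$. Because $(\usc)^\flat$, and hence $\theta$, is compatible with composition of the finite morphism $\Fr$ (so that the iterated isomorphism $(\Fr^e_*M)^\vee\cong\Fr^e_*(M^\vee)$ is assembled from $e$ copies of $\theta$), dualizing $\Ca^e$ returns exactly the $e$-fold iterate of the left action attached to $M^\vee$. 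Since $(\blank)^\vee_K$ is faithful and exact, $\Ca^e=0$ holds if and only if its dual vanishes, i.e.\ if and only if $\psi^e=0$ on $M^\vee$; thus nilpotence is preserved in both directions and the orders of nilpotence coincide.

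The main obstacle is not any single computation but the coherence bookkeeping for $\theta$: checking that the natural isomorphism $\Fr_*\circ(\blank)^\vee_K\cong(\blank)^\vee_K\circ\Fr_*$ interacts correctly both with biduality (needed for the quasi-inverse) and with iteration of $\Fr_*$ (needed for nilpotence). Both trace back to the adjunction isomorphism of \autoref{t.dualityfinite} together with the fixed choice of isomorphism $\Fr^\flat k\cong k$; the compatibility of $(\usc)^\flat$ with composition and the uniqueness of the dualizing module up to $K^\times$ are what force these diagrams to commute, but they must be unwound explicitly rather than invoked as black boxes.
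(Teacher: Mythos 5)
Your proposal follows essentially the same route as the paper: dualize the structural map $\Ca$, compose with the natural isomorphism $(\Fr_*M)^\vee\cong\Fr_*(M^\vee)$ coming from duality for the finite Frobenius and the fixed identification $\Fr^\flat k\cong k$, and observe that the contravariant, exact functor $(\blank)^\vee_K$ is its own quasi-inverse and carries $\Ca^e$ to the $e$-fold iterate of the dual left action. The paper dismisses the quasi-inverse and nilpotence checks as immediate from the construction, whereas you correctly identify and spell out the coherence compatibilities (of $\theta$ with biduality and with iteration of $\Fr_*$) that make them work; this is the same proof with the implicit bookkeeping made explicit.
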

\begin{proof}
Given a coherent Cartier module $(V,C)$ over $K$, that is a finite dimensional $K$-vector-space $V$ with a $K$-linear map $C: \Fr_*V \to V$. Applying $(\blank)^\vee_K=(\blank)^\vee$ we get
\[
    C^\vee: V^\vee \to[C^\vee] (\Fr_*V)^\vee \cong \Fr_*V^\vee
\]
with the second isomorphism being the transformation of functors from the preceding lemma. Clearly, the map $C^\vee$ gives $V^\vee$ the structure of a $K$-vector-space with a left action of the Frobenius, \ie~a left $K[\Fr]$-module. Conversely, the same observation applies and it is easy to verify that these functors induce inverse equivalences.

The fact that nilpotence is preserved follows immediately from the construction.
\end{proof}

\autoref{t.duality.for.fields} enables us to fall back on the much better studied theory of left $k[\Fr]$ modules concerning questions of right $k[\Fr]$-modules. However, this appears to be only possible in the case that $k$ is $F$-finite, since otherwise $\Fr_*k$ is infinite dimensional over $k$, and hence $(\Fr_*k)^\vee$ is not isomorphic to $\Fr_* k$, and in particular $\Fr^\flat k$ (as defined here) cannot be isomorphic to $k$. This also shows that the assumption of $\Fr$-finiteness we make on our schemes is essential to most methods in this paper and quite likely to many of the results as well.

Since we took great care to set up everything compatibly with respect to finitely generated field extensions, we get the following result.

\begin{prop}
Let $i: k \to K$ be a finitely generated extension of an $\Fr$-finite field $k$. Then there are natural functorial isomorphisms
\begin{align*}
     (i_*V)^{\vee}_k &\cong i_*(V^{\vee}_K) \qquad \qquad (i_*W)^{\vee}_k \cong i_*(W^{\vee}_K)\\
     (i^\flat U)^{\vee}_K &\cong i^*(U^{\vee}_k) \qquad \qquad (i^*H)^{\vee}_K \cong i^\flat (H^{\vee}_k)
\end{align*}
for all finite-dimensional right/left $k[\Fr]$-vector-spaces $V,W$, and right/left $K[\Fr]$-vector-spaces $U,H$.

In other words, the duality functor $(\blank)^{\vee}_K$ induces an equivalence of categories between finite dimensional right and left $k[\Fr]$-vector-spaces which commutes with $i_*$ and interchanges $i^\flat$ and $i^*$.
\end{prop}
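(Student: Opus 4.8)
The plan is to prove the four displayed isomorphisms first as isomorphisms of the underlying (non-equivariant) vector spaces, and only afterwards to upgrade them to isomorphisms of $\Fr$-modules. The equivariant upgrade will be essentially formal once the underlying maps are available, because every functor entering the statement --- namely $i_*$, $i^*$, $i^\flat$ and $(\blank)^\vee$ --- is compatible with $\Fr_*$ in a way we already have at our disposal: $i_*$ commutes with $\Fr_*$ (as in the proof of \autoref{t.CartierPushforward}), $i^*=(\blank)\tensor_k K$ and $i^\flat=\Hom_k(K,\blank)$ commute with $\Fr_*$ by the same computations used in \autoref{t.CartierLocalizes} and \autoref{t.ibDefinition}, and $(\blank)^\vee$ intertwines $\Fr_*$ with itself by the functorial isomorphism $\Fr_*\circ(\blank)^\vee\cong(\blank)^\vee\circ\Fr_*$ proved just above. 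Combined with \autoref{t.duality.for.fields}, which says that $(\blank)^\vee$ exchanges right and left $\Fr$-actions, this is exactly what is needed to transport the Frobenius structures across each isomorphism.

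For the underlying isomorphisms I would argue as follows. The first pair $(i_*V)^\vee_k\cong i_*(V^\vee_K)$ is nothing but duality for a finite morphism, \autoref{t.dualityfinite}, applied to $i$: unwinding $V^\vee_K=\Hom_K(V,i^\flat k)$ and $(\blank)^\vee_k=\Hom_k(\blank,k)$, the asserted isomorphism is precisely the trace adjunction $\Hom_k(i_*V,k)\cong i_*\Hom_K(V,i^\flat k)$ of \autoref{t.dualityfinite}. For the second pair I would feed the tensor--$\Hom$ adjunctions for the ring map $i\colon k\to K$ into the same duality: writing $i^*=(\blank)\tensor_k K$ and $i^\flat=\Hom_k(K,\blank)$, the isomorphism $(i^*H)^\vee_K\cong i^\flat(H^\vee_k)$ reduces to the chain $\Hom_K(H\tensor_k K,i^\flat k)\cong\Hom_k(H,i^\flat k)\cong\Hom_k(K,\Hom_k(H,k))$, using hom--tensor adjunction and the symmetry of $\Hom_k(H\tensor_k K,k)$, while $(i^\flat U)^\vee_K\cong i^*(U^\vee_k)$ follows by the same manipulations together with the finite-duality identification. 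All of these isomorphisms are assembled from natural transformations, so naturality in $V,W,U,H$ is automatic.

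It then remains to check $\Fr$-equivariance, and here the whole argument is a diagram chase. For each isomorphism I would apply the relevant functor to the structural map (a right action $\Ca\colon\Fr_*M\to M$ or its adjoint $\kappa$), use the commutations of $i_*,i^*,i^\flat$ with $\Fr_*$ to move $\Fr_*$ past these functors, and then invoke $\Fr_*\circ(\blank)^\vee\cong(\blank)^\vee\circ\Fr_*$ to see that the dual of a right action is a left action on the same underlying space; naturality of the underlying isomorphism then forces it to intertwine the two transported actions. The ``in other words'' reformulation is immediate: the first pair says $(\blank)^\vee$ commutes with $i_*$ and the second pair says it interchanges $i^*$ and $i^\flat$, the equivalence property itself being \autoref{t.duality.for.fields} for $k$ and for $K$.

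I expect the main obstacle to be purely the bookkeeping of $\Fr$-equivariance rather than any essential difficulty: one must keep straight which action is turned from right to left by $(\blank)^\vee$, and must verify that every identification is compatible with the single chosen isomorphism $\Fr^\flat k\cong k$ that pins down $(\blank)^\vee_k$ and, through functoriality of $(\blank)^\flat$, the functors $(\blank)^\vee_K$; it is precisely here, through the commutation $\Fr_*(\blank)^\vee\cong(\blank)^\vee\Fr_*$ and the preservation of finite dimensionality, that the $\Fr$-finiteness of $k$ is essential. A secondary point requiring care is the passage from finite to merely finitely generated extensions: for non-finite $i$ the symbol $i^\flat$ must be read as the dualizing-module functor rather than the naive $\Hom_k(K,\blank)$, so I would first establish everything in the finite case and then extend using the compatibility of the chosen dualizing-module isomorphisms with field extensions set up immediately before the proposition.
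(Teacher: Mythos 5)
Your proof is correct, and its first half coincides with the paper's: the isomorphisms in the first row are exactly the trace adjunction of \autoref{t.dualityfinite} applied to $i$, namely $\Hom_k(i_*V,k)\cong i_*\Hom_K(V,i^\flat k)=i_*(V^\vee_K)$. You diverge in the second row. You obtain $(i^*H)^\vee_K\cong i^\flat(H^\vee_k)$ by an explicit chain of hom--tensor adjunctions (which is valid) and sketch the remaining isomorphism by similar manipulations, whereas the paper disposes of the entire second row in one line by a purely formal argument: since $(i^*,i_*)$ and $(i_*,i^\flat)$ are adjoint pairs and $(\blank)^\vee$ is a (contravariant) equivalence, the commutation of $(\blank)^\vee$ with $i_*$ proved in the first row forces it to interchange the left adjoint $i^*$ with the right adjoint $i^\flat$. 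The formal route has two advantages you might note: it takes place entirely inside the categories of $\Fr$-modules (the adjunctions and the duality \autoref{t.duality.for.fields} are already equivariant statements), so the $\Fr$-equivariance bookkeeping that you correctly identify as the main burden of your approach simply never arises; and it avoids having to verify that the $K$-module structures match up across the hom--tensor identifications. Your closing remark about finitely generated versus finite extensions is a fair observation --- the paper's conventions for $(\blank)^\vee_K$ are set up explicitly only for finite $K/k$, and your proposed reduction to the finite case via the compatibility of the chosen dualizing-module isomorphisms is the right way to read the statement.
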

\begin{proof}
It is sufficient to show the isomorphisms in the first row as this will imply the ones in the second row since $(i_*,i^!)$ and $(i^*,i_*)$ are adjoint pairs and $(\blank)^\vee$ is a duality. For this just observe that $(i_*V)^{\vee}_k = \Hom_k(i_*V,k) \cong i_*\Hom_k(V,i^\flat k) = i_*(V^{\vee}_K)$ where the last equality is due to our convention concerning duality. The second isomorphism follows analogously.
\end{proof}

With this at hand, we can restate some facts from the theory of left $k[\Fr]$-modules in terms of Cartier modules over $k$.

\begin{prop}
\label{t.CartierFieldBasics}
Let $k$ be $F$-finite, and $V,W$ be Cartier modules. Then
\begin{enumerate}
\item There is a direct sum decomposition $V=V_{\nil} \oplus \underline{V}$ where $V_{\nil}$ is nilpotent and $\underline{V}$ does not have a nilpotent subspace.
\item Let $V=\underline{V}$, then there is a finite separable extension $K$ of $k$ such that $V_K=\Hom_k(K,V)\cong K \tensor_k V$ is isomorphic to $\oplus_n \omega_K$ where $\omega_K$ denotes the Cartier module $i^\flat k \to[\cong] F^\flat(i^\flat k)$ corresponding to the standard Frobenius action on $K$ via the duality $(\blank)^\vee_K$.
\item The fixed points of $C_V$ form a $\Primefield_q$-vector-space of dimension less or equal to $\dim_k \underline{V}$ with equality after a finite separable extension.
\item If $V=\underline{V}$ and $W=\underline{W}$, then $\Hom_{\Cart(k)}(V,W)$ is a finite dimensional vector-space over $\Primefield_q$.
\item If $V=\underline{V}$, then there are only finitely many Cartier submodules of $V$.
\end{enumerate}
\end{prop}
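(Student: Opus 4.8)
The guiding principle is to transport every assertion across the duality equivalence of \autoref{t.duality.for.fields} to the category of finite-dimensional $k$-vector-spaces with a \emph{left} Frobenius action, where all the required statements are instances of the classical semilinear structure theory. Recall that $(\blank)^\vee_k$ is exact, contravariant, and interchanges nilpotent objects with nilpotent objects; moreover, by the base-change compatibilities $(i^\flat U)^\vee_K\cong i^*(U^\vee_k)$ and $(i^*H)^\vee_K\cong i^\flat(H^\vee_k)$ established in the preceding proposition, it turns the Cartier base change $i^\flat(\blank)$ along a finite extension $i\colon k\to K$ into the ordinary base change $i^*(\blank)=K\tensor_k(\blank)$ on the left-module side, and it sends the standard Cartier module $\omega_K$ to $K$ with its standard $q$-linear Frobenius $x\mapsto x^q$. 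The two analytic inputs I will use on the left side are: the Fitting-type decomposition of a finite-dimensional left $k[\Fr]$-module into a \emph{unit} part (on which the linearized Frobenius $\beta\colon\Fr^*(\blank)\to(\blank)$ is bijective) and a nilpotent part; and Lang's theorem, equivalently the equivalence between unit left $k[\Fr]$-modules and finite continuous $\BF_q$-representations of $\mathrm{Gal}(k^{\mathrm{sep}}/k)$, which says that a unit module becomes a direct sum of trivial modules after a finite separable extension, its $\Fr$-fixed points then spanning it.

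For (a), I dualize the Fitting decomposition $V^\vee_k=U\oplus(V^\vee_k)_{\nil}$ and translate back, obtaining $V=U^\vee\oplus((V^\vee_k)_{\nil})^\vee$. The second summand is nilpotent, and the first has surjective structural map (being dual to a unit module, it has no nonzero nilpotent Cartier submodule, since a unit module has no nonzero nilpotent quotient). Hence by the uniqueness in \autoref{t.nil-part} and \autoref{t.QuotSurjStruct} these summands are exactly $V_{\nil}$ and $\underline V$. For (b), assume $V=\underline V$, so $V^\vee_k$ is a unit left $k[\Fr]$-module; by Lang's theorem there is a finite separable $K/k$ with $i^*V^\vee_k\cong K^{\oplus n}$ (trivial Frobenius), $n=\dim_k V$. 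Dualizing over $K$ and using that $(\blank)^\vee_K$ sends $i^*$ to $i^\flat$ and $K$ to $\omega_K$ yields $V_K=i^\flat V\cong\bigoplus_n\omega_K$; the identification $i^\flat V\cong K\tensor_k V$ is the finite-extension case of \autoref{t.dualityfinite}.

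For (c), the set $\{v\in V\mid \Ca(v)=v\}$ is an $\BF_q$-subspace since $\Ca$ is $\BF_q$-linear, and it lies in $\underline V$ because a fixed vector killed by a power of $\Ca$ vanishes. The bound $\dim_{\BF_q}V^{\Ca}\le\dim_k\underline V$ is the semilinear independence lemma: $\Ca$-fixed vectors that are $\BF_q$-linearly independent stay $k$-linearly independent (shortest-relation argument, applying $\Ca$). Equality after a finite separable extension is read off from (b), as each $\omega_K$ contributes the one-dimensional fixed space $\BF_q$, whence $(V_K)^{\Ca}\cong\BF_q^{\,n}$. For (d), with $V=\underline V$ and $W=\underline W$ I pass to the unit modules $V^\vee,W^\vee$ and equip the finite-dimensional $k$-space $\Hom_k(W^\vee,V^\vee)$ with the $q$-linear operator $\psi\mapsto \beta_{V^\vee}\circ\Fr^*(\psi)\circ\beta_{W^\vee}^{-1}$, using the bijectivity of the linearized Frobenii on unit modules. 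This makes $\Hom_k(W^\vee,V^\vee)$ a unit module whose $\Fr$-fixed points are precisely the $k[\Fr]$-morphisms $W^\vee\to V^\vee$, i.e.\ $\Hom_{\Cart(k)}(V,W)$; by (c) this is a finite-dimensional $\BF_q$-vector-space.

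Finally, for (e) I use the equivalence above between unit left $k[\Fr]$-modules and finite continuous $\BF_q$-representations of $\mathrm{Gal}(k^{\mathrm{sep}}/k)$, obtained by passing to $\Fr$-fixed points over $k^{\mathrm{sep}}$. Under it, the submodules of a fixed unit module $V^\vee$ correspond bijectively to the Galois-stable $\BF_q$-subspaces of a fixed finite-dimensional $\BF_q$-vector-space, of which there are only finitely many; dualizing back gives the assertion for Cartier submodules of $V=\underline V$. The step requiring genuine care, rather than formal manipulation, is the non-perfect case: over a non-perfect $\Fr$-finite field the structural map of a unit module is never bijective on the underlying space, so throughout one must work with the linearized Frobenius $\Fr^*(\blank)\to(\blank)$ (equivalently, keep the adjoint $\kappa$ of \autoref{t.dualityfinite} in play) and make the Fitting decomposition, Lang's theorem, and the bookkeeping through $i^\flat$, $i^*$ and the fixed isomorphism $\Fr^\flat k\cong k$ all line up consistently.
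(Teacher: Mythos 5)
Your overall route is the one the paper takes: transport everything through the duality $(\blank)^\vee_K$ of \autoref{t.duality.for.fields} to left $k[\Fr]$-modules and invoke the classical semilinear structure theory --- the paper simply cites \cite{DieudonLie} for (b) and \cite[(4.2) Theorem]{Hochster.FinitenessFmod} for (c)--(e), and does (a) directly on $V$ via $V_{\nil}=\ker\kappa^i$ and $\underline{V}=\Ca^i(V)$ for $i\gg0$ plus a dimension count, which is the same computation as dualizing the Fitting decomposition as you do. Your (a), (b), (d) and (e) are sound modulo the sub/quotient bookkeeping under the contravariant duality (Cartier submodules of $V$ correspond to $\Fr$-stable \emph{kernels} of quotients of $V^\vee$, which is what your Galois-theoretic count in (e) actually enumerates).

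The genuine gap is in (c). The ``shortest-relation argument, applying $\Ca$'' is the Artin--Dedekind trick, and it works for $q$-linear operators precisely because applying $F$ to a relation $\sum a_jv_j=0$ among fixed vectors yields the new relation $\sum a_j^qv_j=0$. For a $q^{-1}$-linear $\Ca$ there is no such operation: $\Ca(a_jv_j)$ is uncontrolled unless $a_j$ is a $q$-th power, and over a non-perfect $\Fr$-finite field one cannot extract $q$-th roots of the coefficients. The step does not merely lack justification; the bound it is meant to prove fails for the operator $\Ca$ itself. Take $k=\BF_2(t)$, $q=2$, $V=k$, and let $\Ca$ be the $k$-linear map on $\Fr_*k$ with $\Ca(1)=1$ and $\Ca(t)=t$ on the basis $\{1,t\}$ of $\Fr_*k$ over $k$, i.e.\ $\Ca(a^2+b^2t)=a+bt$. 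Then $\Ca$ is surjective, so $\underline{V}=V$ and $\dim_k\underline{V}=1$, yet $1$, $t$ and $1+t$ are all fixed, so the $\BF_2$-space of fixed points has dimension at least $2$. Hence the fixed-point count must be carried out entirely on the left-module side (the $\Fr$-fixed points of the unit module $V^\vee$, where the classical independence argument is valid), or via the explicit decomposition of (b) after an honest computation of the fixed points of $\omega_K$; note that your argument for (d) in fact only ever uses the left-module fixed-point statement, so it survives once (c) is rerouted this way.
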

\begin{proof}
Since $\dim_k V = \dim_k \Fr^\flat V$, the structural map $\kappa: V \to \Fr^\flat V$ is injective if and only if it is surjective, if and only if it is bijective, and each instance is equivalent to $V$ not having a nilpotent subspace. For a finite field extension, so in particular for $\Fr: k \to k$, the functor $(\blank)^\flat$ is exact, and it follows that the surjectivity of $C$ is in fact equivalent to the surjectivity of its adjoint $\kappa$. Now let $V_{\nil}=\ker(\kappa^i)$ and $\underline{V}=C^i(V)$ for $i$ sufficiently large (as $k$-subspaces of a finite dimensional $k$-vectorspace these will stabilize, \cf~\autoref{s.nilpotence}). Then it is easy to see by dimension reasons that $V_{\nil}$ and $\underline{V}$ have complementary dimension and zero intersection, which shows the first part. The second part follows from the well-known corresponding statement for left $k[\Fr]$-modules using the preceding definition, see \cite{DieudonLie}. The finial three items follow again by the just proven duality from \cite[(4.2) Theorem]{Hochster.FinitenessFmod}
\end{proof}

\begin{rem}
The duality in the case of vector spaces we work out here is a very special case of the duality we describe in \cite{BliBoe.CartierCrys} where we show that Grothendieck-Serre duality for quasi-coherent $\CO_X$-modules can be extended to a duality between right and left $\CO_X[\Fr]$-modules. We decided to include the simple special case of fields here as an instructive example (already making apparent some of the difficulties in duality theory), and because it will be useful later in this paper, \cf~\autoref{t.simpleFiniteHom}.
\end{rem}

\section{Minimal Cartier modules and Cartier crystals}
\label{s.MinimalCartier}
In this section we consider Cartier modules \emph{up to nil-isomorphism}, leading to notion of a Cartier crystal. The main result in this section is the existence of minimal Cartier modules. This result allows one to pick in each nil-isomorphism class of Cartier modules (\ie~for each Cartier crystal) a canonical (minimal) representative. This minimal representative is characterized by having neither nilpotent submodules nor nilpotent quotient modules. In this section we do not assume that our schemes are $\Fr$-finite.

\subsection{Cartier Crystals}
\begin{defn}
A map of Cartier modules is called a \emph{nil-isomorphism} if kernel and cokernel are nilpotent.
\end{defn}
\begin{lem}
The composition of nil-isomorphisms is again a nil-isomorphism.
\end{lem}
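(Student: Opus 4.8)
The plan is to take two composable nil-isomorphisms $\phi\colon M \to N$ and $\psi\colon N \to P$ and show directly that the kernel and cokernel of $\psi\circ\phi$ are nilpotent, using only that nilpotent Cartier modules form a Serre subcategory (\autoref{t.NilSerreSub}), i.e.\ that this class is closed under subobjects, quotients, and extensions. Since kernels, cokernels and images of maps of Cartier modules are computed on the underlying $\CO_X$-modules and inherit a canonical Cartier structure, every object and arrow produced below is automatically a map of Cartier modules, so all of the short exact sequences I write down genuinely live in the abelian category of coherent Cartier modules.

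For the kernel I would use the short exact sequence
\[
    0 \to \Ker\phi \to \Ker(\psi\phi) \to \Ker(\psi\phi)/\Ker\phi \to 0 .
\]
The key observation is that $\phi$ carries $\Ker(\psi\phi)$ into $\Ker\psi$: if $\psi\phi(m)=0$ then $\phi(m)\in\Ker\psi$. Hence $\phi$ induces an injection $\Ker(\psi\phi)/\Ker\phi \hookrightarrow \Ker\psi$, exhibiting the quotient term as a Cartier submodule of the nilpotent module $\Ker\psi$; by \autoref{t.NilSerreSub} it is therefore nilpotent. As $\Ker\phi$ is nilpotent by hypothesis, $\Ker(\psi\phi)$ is an extension of two nilpotent Cartier modules and so is nilpotent, again by \autoref{t.NilSerreSub}.

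Dually, for the cokernel I would use the inclusion $\Image(\psi\phi)\subseteq\Image\psi$, which yields a surjection $\Coker(\psi\phi)\twoheadrightarrow\Coker\psi$ with kernel $\Image\psi/\Image(\psi\phi)$. This kernel is the image of the natural map $\Coker\phi = N/\Image\phi \to \Coker(\psi\phi)$ induced by $\psi$ (well defined since $\psi(\Image\phi)=\Image(\psi\phi)$), hence a quotient of the nilpotent module $\Coker\phi$ and therefore nilpotent. Since $\Coker\psi$ is nilpotent, $\Coker(\psi\phi)$ is once more an extension of nilpotent Cartier modules and so nilpotent. Thus $\psi\phi$ is a nil-isomorphism.

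There is no serious obstacle in this argument; the only points to keep honest are that each auxiliary module really is a Cartier submodule or quotient (immediate, since kernels, cokernels and images in $\Cart(X)$ are the underlying ones equipped with the induced structural map) and that the two nilpotence inputs used — closure under subquotients and closure under extensions — are precisely what \autoref{t.NilSerreSub} supplies.
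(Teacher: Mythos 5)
Your proof is correct and is essentially the argument the paper has in mind: the paper's proof simply asserts that $\ker(\psi\circ\phi)$ (resp.\ the cokernel) is nilpotent of order at most the sum of the orders for $\phi$ and $\psi$, and your extension arguments via \autoref{t.NilSerreSub} are exactly the details behind that assertion. The only cosmetic difference is that the paper tracks the nilpotence orders explicitly ($\leq 2e$) rather than invoking closure under extensions as a black box.
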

\begin{proof}
Let $M' \to[\phi] M \to[\psi] M''$ be a composition of nil-isomorphisms. The kernel of $\phi$ and $\psi$ are both nilpotent, say of order less or equal to $e$. Then it is easy to see that $\ker (\psi \circ \phi)$ is nilpotent of order less or equal to $2e$. Similarly for the cokernel of $(\psi \circ \phi)$.
\end{proof}
\begin{lem}
A map of coherent Cartier modules $M \to[\phi] N$ is a nil-isomorphism if and only if for all $x \in X$ the induced map $\phi_x: M_x \to N_x$ is a nil-isomorphism.
\end{lem}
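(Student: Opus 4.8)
The plan is to reduce the statement to the fact that nilpotence of a coherent Cartier module can be detected on stalks, which is precisely \autoref{t.nil_localizes}. First I would form the kernel $K \defeq \ker\phi$ and the cokernel $Q \defeq \operatorname{coker}\phi$. Since $M,N$ are coherent and $X$ is locally Noetherian, $K$ (a subsheaf of $M$) and $Q$ (a quotient of $N$) are again coherent Cartier modules, carrying the induced structural maps; recall that kernels and cokernels of maps of Cartier modules agree with those of the underlying $\CO_X$-modules. By definition, $\phi$ is a nil-isomorphism exactly when both $K$ and $Q$ are nilpotent.

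The second step is to compare $K$ and $Q$ with the kernel and cokernel of the localized map $\phi_x$. By \autoref{t.CartierLocalizes} localization is an exact functor on Cartier modules, so applying $(\blank)_x$ to the defining short exact sequences yields canonical identifications $\ker(\phi_x) \cong K_x$ and $\operatorname{coker}(\phi_x) \cong Q_x$ \emph{as Cartier modules} (the structural map of a localization being the localization of the structural map). Hence $\phi_x$ is a nil-isomorphism if and only if $K_x$ and $Q_x$ are nilpotent.

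With these identifications the equivalence is immediate in both directions. For the forward direction, if $\phi$ is a nil-isomorphism then $\Ca^e(K)=0=\Ca^e(Q)$ for some $e$; since the formation of the image of $\Ca$ commutes with localization (\autoref{t.CartierLocalizes}), we get $\Ca^e(K_x)=(\Ca^e(K))_x=0$ and likewise for $Q$, at \emph{every} point $x\in X$, not merely the closed ones. Thus every $\phi_x$ is a nil-isomorphism. For the converse, if $\phi_x$ is a nil-isomorphism for all $x\in X$, then in particular $K_x$ and $Q_x$ are nilpotent at every closed point $x$, and \autoref{t.nil_localizes} then forces $K$ and $Q$ to be nilpotent, so that $\phi$ is a nil-isomorphism.

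The only genuine content lies in \autoref{t.nil_localizes}, whose proof uses the quasi-compactness of the Noetherian scheme $X$ to turn a pointwise family of nilpotence exponents into a single uniform one; everything else is the bookkeeping, guaranteed by the exactness in \autoref{t.CartierLocalizes}, that the $\CO_X$-linear kernel and cokernel carry the induced Cartier structure compatibly with localization. I therefore do not expect a real obstacle. The one point to handle carefully is the asymmetry between the two implications: nilpotence of $K$ and $Q$ propagates to \emph{all} points, whereas recovering it from the stalks only requires the closed points, matching the slight discrepancy between the quantifier in the statement and the one in \autoref{t.nil_localizes}.
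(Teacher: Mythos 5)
Your proof is correct and takes essentially the same route as the paper, which simply observes that the forward direction is clear and the converse follows from \autoref{t.nil_localizes}; you have merely spelled out the reduction to the nilpotence of the kernel and cokernel and their compatibility with localization.
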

\begin{proof}
The only if part is clear and the rest follows from \autoref{t.nil_localizes}.
\end{proof}

\begin{defn}
The category of \emph{Cartier crystals} is the abelian category obtained from the category of coherent Cartier modules by localization with respect to nil-isomorphisms.
\end{defn}
This definition means, that the objects are just Cartier modules, but a homomorphism from $M$ to $N$ is a diagram (left fraction):
\[
    M \Longleftarrow M' \to N
\]
where $M' \Longrightarrow M$ is a nil-isomorphism (which we will denote for convenience by $\Longrightarrow$), and $M' \to N$ is a morphism of Cartier modules. The general theory of localization of an abelian category at a multiplicative system defined from a Serre subcategory yields that Cartier crystals form also an abelian category, \cf~\cite{BoPi.CohomCrys} as a reference for standard facts about localization of categories. We will denote the Cartier crystals associated with a Cartier module $M$ by the corresponding calligraphic letter $\CM$. Likewise, if $\CM$ is a crystal, $M$ denotes \emph{some} Cartier module giving rise to that crystal. Some immediate properties are:
\begin{lem}
\label{t.CrysNilp}
Let $M$, $N$ be two coherent Cartier modules and $\CM$, $\CN$ their associated crystals.
\begin{enumerate}
\item A map of Cartier modules $M \to N$ is a nil-isomorphism if and only if the induced map $\CM \to \CN$ of crystals is an isomorphism of Cartier crystals.
\item $\CM \cong \CN$ are isomorphic as crystals if and only if there is a Cartier module $M'$ and nil-isomorphisms $M \Longleftarrow M' \Longrightarrow N$ of Cartier modules.
\item $\CM \cong 0$ as a crystal if and only if $M$ is nilpotent.
\end{enumerate}
\end{lem}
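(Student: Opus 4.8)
The plan is to deduce all three statements from the general theory of the Serre quotient that has already been invoked to make $\Crys(X)$ abelian, using only two inputs: that the quotient functor $Q\colon \CohCart(X)\to \Crys(X)$ is exact, and that $Q$ inverts exactly the nil-isomorphisms. Write $\CS$ for the Serre subcategory of nilpotent coherent Cartier modules (\autoref{t.NilSerreSub}), so that $\Crys(X)=\CohCart(X)/\CS$ and, by definition, a nil-isomorphism is a map whose kernel and cokernel lie in $\CS$. The morphisms of $\Crys(X)$ are the left fractions $M \Longleftarrow M' \to N$ with left leg a nil-isomorphism, as recalled just before the statement.

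First I would isolate the foundational fact, which is precisely part (c):
\[
    \CM \cong 0 \iff M \text{ is nilpotent}.
\]
The forward direction (nilpotent $\Rightarrow \CM\cong 0$) is immediate, since by construction every object of $\CS$ is sent to $0$ by $Q$. The reverse direction --- that $\CS$ is the \emph{entire} kernel of $Q$ --- is the defining property of a Serre quotient; I would cite it from the localization theory referenced in \cite{BoPi.CohomCrys}, or re-derive it by unwinding in the calculus of fractions what it means for $\id_{\CM}$ to vanish. This is the one genuinely non-formal point in the argument.

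With (c) in hand, part (a) follows from exactness of $Q$. Factor $\phi\colon M\to N$ as $M \onto \Image\phi \into N$ and apply $Q$ to the two resulting short exact sequences. Then $Q(\phi)$ decomposes as an epimorphism followed by a monomorphism with kernel $Q(\Ker\phi)$ and cokernel $Q(\Coker\phi)$ respectively; in an abelian category such a composite is an isomorphism if and only if both factors are, that is, if and only if $Q(\Ker\phi)=Q(\Coker\phi)=0$. By part (c) this happens exactly when $\Ker\phi$ and $\Coker\phi$ are nilpotent, i.e.\ when $\phi$ is a nil-isomorphism.

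Finally, part (b) is a formal consequence of (a) together with the left-fraction description of morphisms. One direction is clear: given nil-isomorphisms $M \Longleftarrow M' \Longrightarrow N$, part (a) turns both into isomorphisms of crystals, so $\CM\cong\CM'\cong\CN$. Conversely, an isomorphism $\CM\to\CN$ is represented by a left fraction $M \overset{s}{\Longleftarrow} M' \overset{f}{\to} N$ with $s$ a nil-isomorphism; the represented morphism is $Q(f)\circ Q(s)^{-1}$, and since both it and $Q(s)$ are isomorphisms, so is $Q(f)$, whence $f$ is a nil-isomorphism by part (a). This produces the desired span $M \Longleftarrow M' \Longrightarrow N$ of nil-isomorphisms. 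I expect the main obstacle to be the reverse implication in (c) --- that a module becoming zero in $\Crys(X)$ is already nilpotent --- everything else being bookkeeping with the exact functor $Q$ and the fraction calculus.
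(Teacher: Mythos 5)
Your proposal is correct and matches the paper's (implicit) treatment: the paper gives no proof of this lemma, presenting it as an immediate consequence of the standard theory of localizing an abelian category at the multiplicative system attached to a Serre subcategory (citing \cite{BoPi.CohomCrys}), which is exactly what you spell out. You also correctly identify the only non-formal input, namely that the kernel of the quotient functor is precisely the Serre subcategory of nilpotent modules, with everything else following from exactness of $Q$ and the left-fraction calculus.
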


\begin{prop}
\label{t.basicFuncForCrys}
The previously defined functors $i^\flat$ (for closed embeddings), $j^*$ (open embeddings), and $f_*$ (in general) preserve nil-isomorphism, and hence induce well-defined functors on $\Crys$ whenever they preserve coherence.\footnote{Coherence is clearly preserved under $j^*$ and $i^\flat$, but generally not under $f_*$.}
\end{prop}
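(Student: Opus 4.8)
The plan is to prove the substantive assertion — that each of $j^*$, $i^\flat$ and $f_*$ carries a nil-isomorphism to a nil-isomorphism — and then deduce the descent to $\Crys$ formally. Once a functor on coherent Cartier modules sends nil-isomorphisms to nil-isomorphisms, post-composing with the localization $N\mapsto\CN$ yields a functor into crystals that inverts nil-isomorphisms, so by the universal property of the localization defining $\Crys$ it factors uniquely through a functor on crystals, provided coherence is preserved (always for $j^*$ and $i^\flat$, and for $f_*$ whenever $f_*$ preserves coherence). Thus everything reduces to controlling kernels and cokernels. Throughout I write $K=\ker\phi$ and $Q=\Coker\phi$ for a nil-isomorphism $\phi\colon M\to N$, with $C^e(K)=0$ and $C^f(Q)=0$; the useful reformulation of the latter is $C^f(N)\subseteq\Image\phi$.

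The common ingredient is that all these functors preserve nilpotence. For $j^*$ and $i^\flat$ this is immediate: $j^*L$ is a restriction of $L$ and $i^\flat L=L[I]$ is a Cartier submodule of $L$ (\autoref{t.ibDefinition}), so the power $C_L^e$ killing $L$ also kills its image. For a general $f$, the natural isomorphism $f_*\circ\Fr_*^e\cong\Fr_*^e\circ f_*$ of \autoref{t.CartierPushforward}, together with its derived version $R^if_*\circ\Fr_*^e\cong\Fr_*^e\circ R^if_*$, endows $R^if_*L$ with a structural map, and functoriality gives $(C_{R^if_*L})^e=R^if_*(C_L^e)$; hence $C_L^e=0$ forces every $R^if_*L$ nilpotent. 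The open-immersion case is then settled at once: $j^*$ is exact (\autoref{t.CartierLocalizes}, \autoref{t.RestrOpen}), so $\ker(j^*\phi)=j^*K$ and $\Coker(j^*\phi)=j^*Q$, both nilpotent.

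For $i^\flat$ I would argue directly, avoiding derived functors. Since $i_*i^\flat(-)=(-)[I]$ by \autoref{t.i*ibEquiv} and $i_*$ neither creates nor destroys nilpotence (the structural map is unchanged), it suffices to show that the restriction $M[I]\to N[I]$ of $\phi$ has nilpotent kernel and cokernel. The kernel is $K\cap M[I]=K[I]$, a Cartier submodule of the nilpotent $K$, hence nilpotent. For the cokernel the crux is the following local computation: given $n\in N[I]$ we have $C^f(n)\in C^f(N)\subseteq\Image\phi$, so locally $C^f(n)=\phi(m)$; from $I\cdot C^f(n)=0$ we get $Im\subseteq K$, and then for $r\in I$ the Cartier relation gives $rC^e(m)=C^e(r^{q^e}m)\in C^e(Im)\subseteq C^e(K)=0$, so that $C^e(m)\in M[I]$. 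Therefore $C^{e+f}(n)=\phi(C^e(m))\in\phi(M[I])$, which shows $C^{e+f}$ maps $N[I]$ into the image subsheaf and the cokernel is nilpotent of order $\le e+f$.

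The pushforward $f_*$ is the only genuinely delicate case, and this is where I expect the main obstacle: $f_*$ is merely left exact, so $\Image(f_*\phi)$ can be strictly smaller than $f_*(\Image\phi)$, and the discrepancy is governed by $R^1f_*K$. Factor $\phi$ as $M\twoheadrightarrow J\into N$ with $J=\Image\phi$; left exactness gives $\ker(f_*\phi)=f_*K$ (nilpotent) and identifies $f_*\phi$ with $f_*M\to f_*J\into f_*N$. The long exact sequence of $Rf_*$ on $0\to K\to M\to J\to0$ embeds $\Coker(f_*M\to f_*J)$ into $R^1f_*K$, while left exactness on $0\to J\to N\to Q\to0$ embeds $\Coker(f_*J\into f_*N)$ into $f_*Q$; both targets are nilpotent by the second paragraph. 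Since $\Coker(f_*\phi)$ is an extension of these two cokernels, and nilpotent Cartier modules are closed under extensions (\autoref{t.NilSerreSub}, whose extension argument is uniform in the section and so applies verbatim in the quasi-coherent setting), $\Coker(f_*\phi)$ is nilpotent. This finishes the proof that all three functors preserve nil-isomorphisms, and the descent to $\Crys$ follows as in the first paragraph.
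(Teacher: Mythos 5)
Your proof is correct and, where the paper is terse, genuinely more complete. The overall strategy coincides with the paper's --- treat the three functors separately and reduce everything to exactness properties plus preservation of nilpotence --- and for $j^*$ the two arguments are identical. For $i^\flat$ the paper argues in two steps: left exactness handles maps with nilpotent kernel, and the identity $N[I]=M[I]\cap N$ handles injections with nilpotent cokernel; this leaves implicit the cokernel of $i^\flat$ applied to the surjection $M\twoheadrightarrow\Image\phi$ (which has nilpotent kernel but need not remain surjective under the merely left-exact $i^\flat$). Your single computation --- $C^{e+f}(N[I])\subseteq\phi(M[I])$, obtained from $Im\subseteq K$ and $rC^e(m)=C^e(r^{q^e}m)=0$ for $r\in I$ --- treats an arbitrary nil-isomorphism in one stroke and closes that gap. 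For $f_*$ the paper offers only the remark that the claim ``follows easily using that $\Fr^e_*$ is exact''; your argument via the factorization through the image, the embeddings of the two cokernel pieces into $R^1f_*K$ and $f_*Q$, and the extension-closedness of nilpotent Cartier modules is a correct way to substantiate this, and your observation that the extension argument of \autoref{t.NilSerreSub} applies verbatim to not-necessarily-coherent modules is indeed needed there, since $R^1f_*K$ and $f_*Q$ may fail to be coherent. A variant worth knowing, which avoids higher direct images altogether: every nil-isomorphism $\phi$ admits a map $\beta\colon\Fr_*^{e+f}N\to M$ (built from the factorizations of $C_N^f$ through $\Image\phi\cong M/K$ and of $C_M^e$ through $\Fr_*^e(M/K)$) satisfying $\phi\circ\beta=C_N^{e+f}$ and $\beta\circ\Fr_*^{e+f}\phi=C_M^{e+f}$; any additive functor commuting with $\Fr_*$ transports these identities, which yields all three cases uniformly.
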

\begin{proof}
Since the functor $i^\flat(\blank)=\usc[I]$ is left exact, it preserves nil-injections. Now let $N \into M$ an injective map such that $M/N$ is nilpotent. Since $N[I]=M[I]\cap M$, it follows that the cokernel of $N[I] \into M[I]$ is a submodule of the nilpotent $M/N$ and hence also nilpotent. Hence $i^\flat$ preserves nil-isomorphisms.

Since $j^*$ is exact for open embeddings $j: U \to X$, it preserves nil-isomorphisms since it preserves nilpotence. The case of $f_*$ follows also easily using that $\Fr^e_*$ is exact, since $\Fr$ is affine.
\end{proof}

\subsection{Minimal Cartier modules}
In this section we show that for a Cartier crystal $\CM$ one can choose a representing Cartier module in a functorial fashion.
\begin{defn}\label{d.minimal}
A Cartier module $M$ is called \emph{minimal} if the following two conditions are satisfied:
\begin{enumerate}
\item \label{d.minimal.sub}$M$ has no nilpotent Cartier submodules. (equiv. (in the $\Fr$-finite case) the adjoint structural map $\kappa: M \to \Fr^\flat M$ is injective)
\item \label{d.minimal.quot}$M$ has no nilpotent Cartier quotients. (equiv. the structural map $\Ca: \Fr_*M \to M$ is surjective)
\end{enumerate}
\end{defn}

\begin{lem}
\label{t.NilIsoMinimalIsIso}
A nil-isomorphism of minimal Cartier modules is an isomorphism.
\end{lem}
\begin{proof}
Let $\phi: M \to N$ be a nil-isomorphism of Cartier modules. If $M$ satisfies \autoref{d.minimal} \autoref{d.minimal.sub} then $\phi$ cannot have a nilpotent kernel, hence $\phi$ is injective. If $N$ satisfies \autoref{d.minimal} \autoref{d.minimal.quot}, then $\phi$ cannot have nilpotent cokernel, so $\phi$ must be surjective.
\end{proof}

\begin{lem}
\label{t.MinimalLocalizes}
If $M$ is minimal, then any localization of $M$ is minimal.
\end{lem}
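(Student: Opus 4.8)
The plan is to verify directly that the localization $S^{-1}M$ satisfies both defining conditions of \autoref{d.minimal}, after reducing to an affine $X=\Spec R$ and writing $S^{-1}M$ for the localization at the sheaf of multiplicative sets $S$. Throughout I use \autoref{t.CartierLocalizes}, which equips $S^{-1}M$ with the structural map $\Ca_{S^{-1}M}(\tfrac{m}{s})=\tfrac{\Ca(ms^{q-1})}{s}$, together with the observation from the proof of \autoref{t.ImagesStabilize} that the image of the structural map commutes with localization, $\Ca(S^{-1}M)=S^{-1}\Ca(M)$. Condition \autoref{d.minimal} \autoref{d.minimal.quot} (surjective structural map) is then immediate: minimality of $M$ gives $\Ca(M)=M$, whence $\Ca(S^{-1}M)=S^{-1}\Ca(M)=S^{-1}M$, so the structural map of $S^{-1}M$ is again surjective.

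The substance lies in condition \autoref{d.minimal} \autoref{d.minimal.sub} (no nilpotent submodule). First I would reduce to nilpotence of order one: if $N\subseteq M$ is a nonzero nilpotent Cartier submodule of order $e\geq 1$, then $\Ca^{e-1}(N)$ is a nonzero Cartier submodule annihilated by $\Ca$. Hence, writing $M_1\defeq\{m\mid \Ca(rm)=0\text{ for all }r\in\CO_X\}$ for the largest $\CO_X$-submodule contained in $\ker\Ca$ (the order-one piece of the construction in the footnote to \autoref{d.nilopetence}), a Cartier module has a nonzero nilpotent submodule if and only if $M_1\neq 0$. It therefore suffices to show that forming $M_1$ commutes with localization, $(S^{-1}M)_1=S^{-1}(M_1)$; since $M$ is minimal we have $M_1=0$, and this equality then forces $(S^{-1}M)_1=0$.

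One inclusion is a direct computation from the localization formula: for $n\in M_1$, $r\in R$ and $s,s'\in S$ one finds $\Ca_{S^{-1}M}(\tfrac{r}{s'}\cdot\tfrac{n}{s})=\tfrac{\Ca(\tilde r\,n)}{s's}$ with $\tilde r=r(s's)^{q-1}\in R$, which vanishes because $n\in M_1$; hence $S^{-1}(M_1)\subseteq (S^{-1}M)_1$. The reverse inclusion is the heart of the matter and the step I expect to be the main obstacle. Starting from $t=\tfrac{m}{s}\in (S^{-1}M)_1$, the vanishing of $\Ca_{S^{-1}M}(\tfrac{rm}{s})=\tfrac{\Ca(rms^{q-1})}{s}$ for every $r\in R$ says only that each element $\Ca(rms^{q-1})$ is $S$-torsion, with an \emph{a priori} $r$-dependent annihilator. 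To clear denominators uniformly I would note that $\{\Ca(rms^{q-1})\mid r\in R\}$ generates the $R$-submodule $\Ca(R\cdot ms^{q-1})$ of $M$; here coherence of $M$ enters crucially, since Noetherianity makes this submodule finitely generated, so a single $u\in S$ annihilates all of it. Using $u\,\Ca(x)=\Ca(u^{q}x)$, the element $w\defeq u^{q}s^{q-1}\in S$ then satisfies $\Ca(r'wm)=0$ for all $r'\in R$, i.e. $wm\in M_1$, whence $t=\tfrac{wm}{ws}\in S^{-1}(M_1)$.

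This establishes $(S^{-1}M)_1=S^{-1}(M_1)$ and hence the claim. I note in passing that in the $\Fr$-finite case there is a quicker route, since then $\Fr^\flat(\usc)=\SHom_{\CO_X}(\Fr_*\CO_X,\usc)$ commutes with localization (the first argument being coherent), so exactness of $S^{-1}$ transports injectivity of the adjoint map $\kappa$ across the localization and \autoref{t.nilkappa} applies directly; but as the present section does not assume $\Fr$-finiteness, the argument above via $M_1$ is the one I would record.
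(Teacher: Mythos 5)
Your proof is correct. It follows the same overall strategy as the paper's -- transport surjectivity of the structural map through localization by exactness, and transport a putative nilpotent submodule of $S^{-1}M$ back to a nonzero nilpotent Cartier submodule of $M$ -- but your execution is genuinely more careful. The paper's proof consists of two sentences and asserts that a nilpotent submodule of $S^{-1}M$ has nonzero ``intersection (pre-image under the localization map)'' with $M$; read literally this is delicate, because if $N\subseteq S^{-1}M$ is nilpotent of order $e$ then its preimage $P\subseteq M$ only satisfies $\Ca^e(P)\subseteq T$, where $T$ is the $S$-torsion of $M$, so $P$ need not itself be nilpotent, and the annihilators of the individual elements $\Ca(rms^{q-1})$ depend a priori on $r$. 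Your detour through the order-one piece $M_1$, together with the use of Noetherianity to extract a single $u\in S$ killing the whole submodule $\Ca(R\,ms^{q-1})$ at once, is precisely what closes this gap and yields the clean identity $(S^{-1}M)_1=S^{-1}(M_1)$. The only cost is that your argument genuinely uses coherence of $M$, which is harmless here since the section applies minimality only to coherent Cartier modules; your closing observation that in the $\Fr$-finite case one can instead transport injectivity of $\kappa$ through the exact functor $S^{-1}$ and invoke \autoref{t.nilkappa} is also correct, and correctly set aside given that \autoref{s.MinimalCartier} avoids the $\Fr$-finiteness hypothesis.
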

\begin{proof}
Since localization is exact, the structural map on any localization is also surjective. Since any nilpotent submodule of a localization of $M$ has nonzero intersection (pre-image under the localization map) with $M$ it follows that a localization cannot have nilpotent submodules, unless $M$ did.
\end{proof}

\begin{thm}
\label{t.ExistenceMinimal}
The assignment $M \mapsto M_{min}$ which maps a coherent Cartier module $M$ to the nil-isomorphic Cartier module $M_{min}$ defines a functor. The Cartier module $M_{min}$ can be obtained as a sub-quotient of $M$, and any minimal Cartier module $N$ which is nil-isomorphic to $M$ is isomorphic to $M_{min}$.
\end{thm}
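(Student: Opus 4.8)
The plan is to build $M_{min}$ by composing the two canonical operations already available: first pass to $\overline{M}=M/M_{\nil}$ from \autoref{t.nil-part}, which has no nonzero nilpotent Cartier submodule, and then take the stable image $\underline{\overline{M}}=C^e(\overline{M})$ for $e\gg0$ from \autoref{t.QuotSurjStruct}, whose structural map is surjective. So I set $M_{min}\defeq\underline{\overline{M}}$.

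First I would verify that $M_{min}$ is minimal in the sense of \autoref{d.minimal}. Property \autoref{d.minimal}\,\autoref{d.minimal.quot} is immediate, since $\underline{\overline{M}}$ is a stable image and hence has surjective structural map. For \autoref{d.minimal}\,\autoref{d.minimal.sub}, observe that $M_{min}\subseteq\overline{M}$, so any nilpotent Cartier submodule of $M_{min}$ would be a nilpotent Cartier submodule of $\overline{M}$, of which there are none by \autoref{t.nil-part}. In particular $M_{min}$ is a subquotient of $M$ (a submodule of a quotient), and it is nil-isomorphic to $M$: the surjection $M\onto\overline{M}$ has nilpotent kernel $M_{\nil}$, and the inclusion $M_{min}\into\overline{M}$ has nilpotent cokernel $\overline{M}/\underline{\overline{M}}$, so both are nil-isomorphisms and $\CM\cong\overline{\CM}\cong\mathcal{M}_{min}$ by \autoref{t.CrysNilp}.

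The functoriality is the technical heart, and is where I expect the only real friction. Given a morphism $f\colon M\to N$, the image $f(M_{\nil})$ is a nilpotent Cartier submodule of $N$, hence contained in $N_{\nil}$ by the maximality in \autoref{t.nil-part}; thus $f$ descends to $\overline{f}\colon\overline{M}\to\overline{N}$, and this assignment is visibly functorial. Choosing a single $e$ larger than the stabilization indices of both $\overline{M}$ and $\overline{N}$ (these exist by \autoref{t.ImagesStabilize}), one has
\[
    \overline{f}\bigl(C^e(\overline{M})\bigr)=C^e\bigl(\overline{f}(\overline{M})\bigr)\subseteq C^e(\overline{N}),
\]
so $\overline{f}$ restricts to a map $f_{min}\colon M_{min}\to N_{min}$. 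Compatibility with composition and identities follows from that of $\overline{(\usc)}$ and of $C^e$, yielding a functor $M\mapsto M_{min}$. The subtle points to check carefully are precisely that the uniform choice of $e$ makes the restriction land in the \emph{target's} stable image, and that minimal modules are fixed: if $N$ is minimal then $N_{\nil}=0$ gives $\overline{N}=N$, while surjectivity of its structural map gives $C^e(N)=N$, so $N_{min}=N$.

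Finally I would deduce that the functor sends nil-isomorphisms to isomorphisms and conclude the uniqueness clause. If $f$ is a nil-isomorphism, it induces an isomorphism of crystals by \autoref{t.CrysNilp}; since $M_{min}$ and $N_{min}$ carry the crystals $\CM$ and $\CN$, the induced map $f_{min}$ again induces an isomorphism of crystals, hence is itself a nil-isomorphism, and being a morphism of minimal modules it is an isomorphism by \autoref{t.NilIsoMinimalIsIso}. For uniqueness, let $N$ be any minimal Cartier module nil-isomorphic to $M$. By \autoref{t.CrysNilp} there is a span $N\Longleftarrow P\Longrightarrow M_{min}$ of nil-isomorphisms; applying the functor and using $N_{min}=N$ together with $(M_{min})_{min}=M_{min}$ produces isomorphisms $N\cong P_{min}\cong M_{min}$, which is the asserted uniqueness.
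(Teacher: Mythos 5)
Your proposal is correct and follows essentially the same route as the paper: it defines $M_{min}=\underline{\overline{M}}$, derives functoriality from that of $\overline{(\usc)}$ and $\underline{(\usc)}$, and deduces uniqueness by applying the functor to a span of nil-isomorphisms and invoking \autoref{t.NilIsoMinimalIsIso}. You merely spell out in more detail the steps the paper asserts as clear (the descent of morphisms, the uniform choice of $e$, and that minimal modules are fixed points).
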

\begin{proof}
Define
\[
    M_{min} \defeq \underline{(\overline{M})}
\]
where $\overline{\blank}$ and $\underline{\blank}$ are as in \autoref{t.nil-part} and \autoref{t.ImagesStabilize}. Clearly $M_{min}$ is minimal, a sub-quotient of $M$, and nil-isomorphic to $M$. Since $\overline{\blank}$ and $\underline{\blank}$ are both functorial, so is their composition. If $N \Longleftarrow N' \Longrightarrow M$ is another sequence of nil-isomorphisms with $N$ minimal, the functoriality implies that we have nil-isomorphisms $N=N_{min} \Longleftarrow N'_{min} \Longrightarrow M_{min}$. Since nil-isomorphisms of minimal Cartier modules are isomorphisms by \autoref{t.NilIsoMinimalIsIso} it follows that $N \cong M_{min}$ which implies uniqueness.
\end{proof}
Note that the uniqueness part of \autoref{t.ExistenceMinimal} shows that the order in which $\overline{\blank}$ and $\underline{\blank}$ are applied to arrive at a minimal Cartier module is irrelevant.
\begin{lem}
\label{t.HomCrysMinEqual}
If $M,N$ are minimal Cartier sheaves and $\CM,\CN$ denote their associated crystals, then $\Hom_{\Cart}(M,N)=\Hom_{\Crys}(\CM,\CN)$.
\end{lem}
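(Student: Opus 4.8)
The plan is to show that the localization functor from coherent Cartier modules to Cartier crystals induces a bijection on the $\Hom$-sets in question. There is always a natural map $\Hom_{\Cart}(M,N)\to\Hom_{\Crys}(\CM,\CN)$ sending a morphism $\phi$ to the crystal morphism it represents, namely the left fraction with identity denominator $M\Longleftarrow M\to[\phi]N$. I would prove this map is injective using only that $N$ is minimal, and surjective using that \emph{both} $M$ and $N$ are minimal; the content of the lemma is exactly that minimality removes all the freedom present in the fraction calculus.

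For injectivity, suppose $\phi\colon M\to N$ induces the zero morphism of crystals. Since the localization functor is exact (a standard fact for localization at a Serre subcategory, as invoked after the definition of $\Crys$), it preserves images, so the crystal associated to the Cartier submodule $\Image\phi\subseteq N$ is the image of the zero morphism, i.e.\ is zero. By \autoref{t.CrysNilp} this forces $\Image\phi$ to be nilpotent. But $\Image\phi$ is a Cartier submodule of the minimal module $N$, which by \autoref{d.minimal}\,\autoref{d.minimal.sub} has no nonzero nilpotent submodules; hence $\Image\phi=0$ and $\phi=0$. (One can also argue directly: $\phi\equiv 0$ in $\Crys$ means there is a nil-isomorphism $t\colon Q\to M$ with $\phi\circ t=0$, so $\Image t\subseteq\ker\phi$; as $\Coker t$ is nilpotent, $\Image\phi\cong M/\ker\phi$ is a quotient of the nilpotent $M/\Image t$, hence nilpotent, and again vanishes by minimality of $N$.)

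For surjectivity, let a crystal morphism be represented by a left fraction $M\Longleftarrow M'\to[\psi]N$, and write $s\colon M'\to M$ for the leftward nil-isomorphism. The two minimality hypotheses pin down $s$ and $\psi$. First, $M/\Image s=\Coker s$ is nilpotent, and since $M$ is minimal it has no nilpotent quotients by \autoref{d.minimal}\,\autoref{d.minimal.quot}; therefore $s$ is surjective. Its kernel $K=\ker s$ is then nilpotent because $s$ is a nil-isomorphism. Now $\psi(K)$ is a homomorphic image of the nilpotent module $K$, hence a nilpotent Cartier submodule of $N$, and minimality of $N$ forces $\psi(K)=0$. Thus $\psi$ factors through the surjection $s$, yielding a genuine morphism $\phi\colon M\to N$ with $\phi\circ s=\psi$. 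The map $s$ itself exhibits the equivalence between the fraction $(s,\psi)$ and the fraction $(\id_M,\phi)$, so $\phi$ represents the given crystal morphism, proving surjectivity.

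The main obstacle is surjectivity, and it is precisely here that minimality is indispensable: one must convert an abstract left fraction into an honest morphism of Cartier modules. The condition of having no nilpotent quotient forces the denominator $s$ to be an honest surjection, while the condition of having no nilpotent submodule forces the numerator $\psi$ to kill $\ker s$; together they leave no room for any genuinely new morphisms in $\Crys$. Everything else is routine bookkeeping in the calculus of fractions, for which I would rely on the standard localization facts already cited in the definition of Cartier crystals.
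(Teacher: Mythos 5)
Your proof is correct. It differs from the paper's mainly in the surjectivity step: the paper simply applies the minimalization functor $(\blank)_{min}$ of \autoref{t.ExistenceMinimal} to the representing fraction $M \Longleftarrow M' \to N$, obtaining $M\cong M'_{min}\to N_{min}=N$ in one stroke, whereas you unwind the fraction by hand --- using the absence of nilpotent quotients of $M$ to force the denominator $s$ to be surjective, and the absence of nilpotent submodules of $N$ to force $\psi$ to kill the nilpotent kernel of $s$, so that $\psi$ factors through $M$. The two arguments are really the same mechanism seen from two sides (your factorization is essentially what functoriality of $(\blank)_{min}$ packages), but yours is more self-contained: it does not rely on \autoref{t.ExistenceMinimal}, and it makes explicit exactly which of the two minimality conditions is responsible for which half of the statement. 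The paper's version buys brevity and reuses already-established machinery; yours buys transparency and would survive even if one had not yet proved the existence and functoriality of minimal models. Your injectivity argument (image of a crystal-zero morphism is nilpotent, hence zero in a minimal target) is also a correct and more explicit rendering of the paper's terse appeal to \autoref{t.CrysNilp}.
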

\begin{proof}
By \autoref{t.CrysNilp} the map $\Hom_{\Cart}(M,N) \to \Hom_{\Crys}(\CM,\CN)$ induced by the functor $M \mapsto \CM$ is injective. For surjectivity, let $\phi \in \Hom_{\Crys}(\CM,\CN)$ be represented by a map of Cartier modules
\[ M \Longleftarrow M' \to[\phi'] N. \]
The functoriality of $(\blank)_{min}$ induces a map $M \cong M'_{min} \to[\phi'_{min}] N$ which also represents $\phi$, but is a honest map of Cartier modules from $M \to N$.
\end{proof}

\begin{thm}
The category of Cartier crystals is equivalent to the category of minimal Cartier modules.
\end{thm}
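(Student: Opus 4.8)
The plan is to construct an explicit equivalence between the category $\Crys$ of Cartier crystals and the full subcategory $\MinCart$ of minimal Cartier modules, using the minimalization functor $(\blank)_{min}$ from \autoref{t.ExistenceMinimal} together with the localization functor $M \mapsto \CM$ sending a coherent Cartier module to its associated crystal. First I would define a functor $F\colon \Crys \to \MinCart$ on objects by $F(\CM)=M_{min}$, where $M$ is any Cartier module representing $\CM$; this is well-defined on objects because nil-isomorphic modules have isomorphic minimalizations (uniqueness in \autoref{t.ExistenceMinimal}). In the other direction, the inclusion $\MinCart \hookrightarrow \CohCart$ composed with localization gives a functor $G\colon \MinCart \to \Crys$, $G(M)=\CM$.

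The key step is to check that $F$ is actually a functor on morphisms and that $F$ and $G$ are mutually inverse. On morphisms, a map $\phi\colon \CM \to \CN$ in $\Crys$ is a left fraction $M \Longleftarrow M' \to[\phi'] N$; applying functoriality of $(\blank)_{min}$ and using $M_{min}\cong (M')_{min}$ yields an honest Cartier-module map $M_{min}\to N_{min}$, exactly as in the proof of \autoref{t.HomCrysMinEqual}. For the composition $G\circ F$, given a crystal $\CM$ we have $F(\CM)=M_{min}$, and since $M_{min}$ is nil-isomorphic to $M$, \autoref{t.CrysNilp}\autoref{} gives $\mathcal{M}_{min}\cong \CM$ in $\Crys$; so $G\circ F \cong \id_{\Crys}$. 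For $F\circ G$, given a minimal module $M$ we have $G(M)=\CM$ and $F(\CM)=M_{min}$, and since $M$ is already minimal, $M_{min}\cong M$; hence $F\circ G\cong \id_{\MinCart}$.

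The main technical point, which I expect to be the principal obstacle, is the bijectivity of $F$ and $G$ on Hom-sets, i.e.\ fully faithfulness. This is precisely the content of \autoref{t.HomCrysMinEqual}: for minimal $M,N$ one has $\Hom_{\Cart}(M,N)=\Hom_{\Crys}(\CM,\CN)$, the injectivity coming from \autoref{t.CrysNilp} and the surjectivity from lifting a left fraction through $(\blank)_{min}$ and invoking \autoref{t.NilIsoMinimalIsIso} to see that the nil-isomorphism $M\cong (M')_{min}$ is an isomorphism of minimal modules. Combined with essential surjectivity of $G$ (every crystal $\CM$ is $G(M_{min})$, again by the nil-isomorphism $M_{min}\Longrightarrow M$ and \autoref{t.CrysNilp}), this establishes the equivalence.

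I would conclude by assembling these observations: $F$ and $G$ are well-defined functors, they are quasi-inverse by the two natural isomorphisms above, and fully faithfulness is subsumed in \autoref{t.HomCrysMinEqual}. In effect, nearly all the work has already been done in establishing the existence, functoriality, and uniqueness of minimalization together with \autoref{t.NilIsoMinimalIsIso} and \autoref{t.HomCrysMinEqual}, so the proof is a matter of packaging these results into the statement of an equivalence of categories rather than proving anything genuinely new.
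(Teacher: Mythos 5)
Your proposal is correct and follows essentially the same route as the paper: both directions of the equivalence are given by the localization functor restricted to $\MinCart$ and by $\CM \mapsto M_{min}$, with well-definedness coming from the uniqueness statement in \autoref{t.ExistenceMinimal} and fully faithfulness supplied by \autoref{t.HomCrysMinEqual}. The paper's proof is merely a terser packaging of the same observations.
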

\begin{proof}
One has the natural functors
\[
    \MinCart(X) \into \Cart(X) \to \Crys(X)\, .
\]
Conversely, the map that assigns to each Cartier crystal $\CM$ represented by some Cartier module $M$ the minimal Cartier module $M_{min}$ is a well defined functor from Cartier crystals to minimal Cartier modules. Well defined-ness follows since if $M'$ also represents $\CM$, then $M$ and $M'$ are nil-isomorphic and hence $M_{min} \cong M'_{min}$. Now the preceding lemma shows that these two functors are inverse equivalences.
\end{proof}

\subsection{Crystalline support and Kashiwara equivalence}
A simple, however crucial, property of Cartier modules is captured by the following observation
\begin{lem}
\label{t.AnnRadical}
Let $(M,\Ca)$ be a Cartier module with \emph{surjective} structural map, \ie~$C(M)=M$. Then $\Ann_{\CO_X} M$ is a sheaf of radical ideals.
\end{lem}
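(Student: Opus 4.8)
The plan is to reduce the statement to a one-line computation exploiting the $q^{-1}$-linearity of $\Ca$. Since being a sheaf of radical ideals is a local property on $X$, I may work on an affine open $U=\Spec R$; writing $M|_U = \widetilde{N}$ for the $R$-module $N=M(U)$, it then suffices to show that $I \defeq \Ann_R N$ is a radical ideal of $R$. Here I use the hypothesis $\Ca(M)=M$: surjectivity of a map of quasi-coherent sheaves translates on the affine $U$ into surjectivity of $\Ca$ on sections, since the kernel is again quasi-coherent and the global-sections functor is exact on an affine scheme, so the sequence of sections over $U$ stays exact. Noting that $(\Fr_*M)(U)$ and $M(U)$ have the same underlying abelian group, this makes $\Ca\colon N \to N$ a surjective $q^{-1}$-linear endomorphism.

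The heart of the argument is the implication $r^q \in I \Rightarrow r \in I$. Suppose $r^q N = 0$ and let $n \in N$ be arbitrary. By surjectivity of $\Ca$ I may write $n = \Ca(n')$ for some $n' \in N$, and then the defining relation $\Ca(r^q n') = r\,\Ca(n')$ gives
\[
    rn = r\,\Ca(n') = \Ca(r^q n') = \Ca(0) = 0,
\]
where $r^q n' = 0$ precisely because $r^q \in I$. Hence $rN = 0$, that is, $r \in I$.

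Finally I would upgrade this to full radicality of $I$. If $r^m \in I$ for some $m \geq 1$, choose $k$ with $q^k \geq m$; then $r^{q^k} = r^{q^k - m} r^m \in I$, and applying the implication just proved to $s = r^{q^{k-1}}$ (so that $s^q = r^{q^k} \in I$) yields $r^{q^{k-1}} \in I$. Iterating $k$ times gives $r \in I$, so $I = \sqrt{I}$ is radical, as desired. Running this over every affine open shows that $\Ann_{\CO_X} M$ is a sheaf of radical ideals.

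I expect no serious obstacle: the $q^{-1}$-linearity does essentially all the work, and surjectivity of $\Ca$ is exactly what permits pulling a section back to $n'$ before applying the relation. The only points needing mild care are the reduction to the affine-local algebraic statement---in particular verifying that surjectivity of the sheaf map descends to surjectivity on sections over an affine, which uses quasi-coherence of the kernel---and the elementary passage from the $q$-th power closure property to genuine radicality.
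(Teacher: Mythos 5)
Your proof is correct and follows essentially the same route as the paper's: the paper picks $e$ with $q^e\geq k$ and computes $0=\Ca^e(x^{q^e}M)=x\Ca^e(M)=xM$ in one step, whereas you prove the single-step implication $r^q\in I\Rightarrow r\in I$ and iterate, which is the same computation factored differently. The extra care you take with the affine-local reduction and the surjectivity on sections is sound but not essential to the argument.
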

\begin{proof}
Let $x \in \CO_X$ be a local section such that $x^kM=0$. Then for $q^e \geq k$ we have $0=x^kM=x^{q^e}M=\Ca^e(x^{q^e}M)=xC^e(M)=xM$.
\end{proof}

\begin{cor}
\label{thm.finitelentghlenth1}
If $X=\Spec R$ is affine, $\frp$ is a prime ideal, and $M$ a Cartier module with \emph{surjective} structural map, then $\frp^k M=0$ implies $\frp M=0$.
\end{cor}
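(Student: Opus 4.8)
The plan is to deduce this immediately from \autoref{t.AnnRadical}, of which it is a direct corollary. Since $X=\Spec R$ is affine, a Cartier module $M$ is simply an $R$-module with a $q^{-1}$-linear endomorphism, and the sheaf-theoretic annihilator $\Ann_{\CO_X} M$ corresponds to the ideal $\Ann_R M = \{r \in R \mid rM = 0\}$. The hypothesis $\frp^k M = 0$ is then nothing but the inclusion $\frp^k \subseteq \Ann_R M$, and the goal $\frp M = 0$ is the inclusion $\frp \subseteq \Ann_R M$.

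First I would invoke \autoref{t.AnnRadical}: because $M$ has surjective structural map, $\Ann_R M$ is a radical ideal. Taking radicals in $\frp^k \subseteq \Ann_R M$ and using radicality of the right-hand side gives $\sqrt{\frp^k} \subseteq \sqrt{\Ann_R M} = \Ann_R M$. Now the elementary point is that $\sqrt{\frp^k} = \sqrt{\frp} = \frp$ since $\frp$ is prime (hence already radical). Combining these yields $\frp \subseteq \Ann_R M$, i.e.\ $\frp M = 0$, as desired.

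Alternatively, and in closer parallel with the proof of \autoref{t.AnnRadical}, I could argue element-wise: for each $x \in \frp$ one has $x^k M \subseteq \frp^k M = 0$, so by the radicality conclusion of \autoref{t.AnnRadical} (applied to the single element $x$) we get $xM = 0$; letting $x$ range over all of $\frp$ then gives $\frp M = 0$. There is essentially no obstacle in either route — the entire mathematical content sits in the preceding lemma, and the only thing that needs checking is the trivial identity $\sqrt{\frp^k} = \frp$ for a prime ideal $\frp$. I would therefore expect the write-up to be a one-line deduction rather than a genuine argument.
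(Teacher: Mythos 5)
Your deduction is correct and matches the paper's intent exactly: the corollary is stated without proof as an immediate consequence of \autoref{t.AnnRadical}, and your argument (radicality of $\Ann_R M$ plus $\sqrt{\frp^k}=\frp$, or equivalently the element-wise version) is precisely the intended one-line deduction.
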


\begin{defn}
Let $M$ be a coherent Cartier module on $X$. We define $\suppcrys(M)$, the \emph{crystalline support} of $M$, as the set of all $x \in X$ such that $M_x$ is not nilpotent.
\end{defn}
\begin{lem}
\label{l.nilSuppWellDef}
\begin{enumerate}
\item \label{z.crysSuppA}If the structural map of $M$ is surjective, then $\suppcrys(M) = \supp(M)$.
\item If $M$ and $N$ are nil-isomorphic, then $\suppcrys(M) = \suppcrys(N)$, In particular, using \autoref{z.crysSuppA} we have $\suppcrys(M)=\supp(\underline{M})$.
\item \label{z.crysSuppC} $\suppcrys(M)$ is a closed reduced subscheme.
\end{enumerate}
\end{lem}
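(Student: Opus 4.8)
The plan is to treat the three parts in order, reducing parts (b) and (c) to part (a) together with the canonically associated, surjectively-structured submodule $\underline{M}$ from \autoref{t.QuotSurjStruct}, which is nil-isomorphic to $M$.

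For part (a), the inclusion $\suppcrys(M)\subseteq\supp(M)$ is immediate, since a non-nilpotent localization is in particular non-zero. For the reverse inclusion I would use that the formation of the image of the structural map commutes with localization (as in \autoref{t.CartierLocalizes}), so that surjectivity of $\Ca$ on $M$ passes to surjectivity of $\Ca$ on each stalk $M_x$. A non-zero Cartier module with surjective structural map then satisfies $\Ca^e(M_x)=M_x\neq 0$ for every $e$ and is therefore not nilpotent; hence $M_x\neq 0$ forces $x\in\suppcrys(M)$, giving $\supp(M)\subseteq\suppcrys(M)$.

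For part (b), the key point is that nilpotence at a point is a nil-isomorphism invariant. Writing a zig-zag $M\Longleftarrow M'\Longrightarrow N$ as in \autoref{t.CrysNilp}, I would localize at an arbitrary $x\in X$; since kernels and cokernels commute with localization and localizations of nilpotent modules are nilpotent, the localized maps $M_x\Longleftarrow M'_x\Longrightarrow N_x$ are again nil-isomorphisms of coherent Cartier modules over $\CO_{X,x}$. By \autoref{t.CrysNilp} a coherent Cartier module is nilpotent exactly when its associated crystal vanishes, and nil-isomorphic modules have isomorphic crystals; hence $M_x$, $M'_x$ and $N_x$ are simultaneously nilpotent or not, yielding $\suppcrys(M)=\suppcrys(N)$ pointwise. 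Applying this to the nil-isomorphism $\underline{M}\hookrightarrow M$ (whose cokernel $M/\underline{M}$ is nilpotent by \autoref{t.QuotSurjStruct}) together with part (a) gives $\suppcrys(M)=\suppcrys(\underline{M})=\supp(\underline{M})$.

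Part (c) then follows by reading off a scheme structure from $\underline{M}$. By part (b), $\suppcrys(M)$ is, as a set, the support of the coherent sheaf $\underline{M}$, which is closed. Equipping it with the subscheme structure $V(\Ann_{\CO_X}\underline{M})$ in the sense of the footnote convention, reducedness is exactly the assertion that $\Ann_{\CO_X}\underline{M}$ is a radical ideal sheaf, which holds by \autoref{t.AnnRadical} since $\underline{M}$ has surjective structural map. The one thing to be careful about is the localization bookkeeping in part (b): one must confirm that forming kernels, cokernels and images, and detecting nilpotence, all commute with passage to the stalks $M_x$. Once that is in place every step is formal, and I expect no genuinely hard estimate to be needed.
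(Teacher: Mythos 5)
Your argument is correct and follows essentially the same route as the paper: surjectivity of the structural map passes to stalks, a nonzero Cartier module with surjective structural map cannot be nilpotent, nil-isomorphisms localize, and reducedness comes from \autoref{t.AnnRadical} applied to $\underline{M}$. The only cosmetic difference is that you spell out the localization bookkeeping for the zig-zag of nil-isomorphisms, which the paper leaves implicit.
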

\begin{proof}
Since surjectivity of the structural map passes to localization, every $M_x$ for $x \in X$ has this property. But clearly, a Cartier module with surjective structural map is nilpotent if and only it is zero, proving the first part. Similarly, if $M$ and $N$ are nil-isomorphic, then so are $M_x$ and $N_x$ for all $x \in X$. Hence $M_x$ is nilpotent if and only if $N_x$ is nilpotent. Part \autoref{z.crysSuppA} shows that $\suppcrys(M)$ is a closed set, equal to $\supp(\underline{M})$. But also the scheme structure given by $\Ann_{\CO_X} \underline{M}$ is reduced since by \autoref{t.AnnRadical} $\Ann_{\CO_X} \underline{M}$ is radical, since $\underline{M}$ has surjective structural map.
\end{proof}
These observations guarantee the well defined-ness of the following notion of support for a Cartier crystal.
\begin{defn}
Let $\CM$ be a Cartier crystal represented by the Cartier module $M$. Then the \emph{support} of $\CM$ is defined as $\supp(\CM)=\suppcrys(M)$, which by \autoref{z.crysSuppC} above is a closed and reduced subscheme.
\end{defn}

Let $Y \subseteq X$ be a closed embedding, then the Kashiwara-type equivalence we will derive states that the category of Cartier crystals $\Crys(Y)$ on $Y$ is equivalent to the category of Cartier crystals on $X$ which are supported in $Y$. This latter subcategory of $\Crys(X)$ we denote by $\Crys_Y(X)$.
\begin{lem}
Let $Y \subseteq X$ be a closed embedding and let $(M,\Ca)$ be a coherent Cartier module on $X$ with surjective structural map. Then $M$ is supported in $Y$, if and only if $M=M[I]=\{m \in M | Im=0 \}$.
\end{lem}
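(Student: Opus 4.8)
The claim is a clean equivalence: for a coherent Cartier module $(M,\Ca)$ on $X$ with surjective structural map $\Ca$, being supported in the closed subscheme $Y$ (defined by an ideal sheaf $I$) is equivalent to $M = M[I]$, i.e.\ $I$ annihilates $M$. The plan is to prove the two implications separately, and the real content lies in the forward direction; the reverse is formal.

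\textbf{The easy direction.} Suppose $M = M[I]$, so $IM = 0$. Then $I \subseteq \Ann_{\CO_X} M$, which means $M$ is annihilated by the ideal cutting out $Y$, and hence $\supp(M) \subseteq Y$ as schemes. Combined with the hypothesis that $\Ca$ is surjective, \autoref{l.nilSuppWellDef}\autoref{z.crysSuppA} gives $\suppcrys(M) = \supp(M) \subseteq Y$, so $M$ is supported in $Y$. This requires no real work beyond unwinding the definition of the closed subscheme structure via the annihilator.

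\textbf{The substantive direction.} Now assume $M$ is supported in $Y$, so $\supp(M) \subseteq Y$, and I must deduce $IM = 0$. The subtlety is that scheme-theoretic containment of supports only gives $I \subseteq \sqrt{\Ann_{\CO_X} M}$, i.e.\ a \emph{power} $I^k$ of $I$ lies in $\Ann_{\CO_X}M$ locally, not $I$ itself. This is where surjectivity of $\Ca$ is essential. The key tool is \autoref{t.AnnRadical}: since $\Ca$ is surjective, $\Ann_{\CO_X} M$ is a \emph{radical} ideal sheaf. Therefore the scheme structure on $\supp(M)$ is reduced, and $\supp(M) \subseteq Y$ as sets forces the radical ideal $\Ann_{\CO_X} M = \sqrt{\Ann_{\CO_X}M}$ to contain the \emph{reduced} ideal of $Y$ — but more directly, since $Y$ need not be reduced, the cleanest argument is local and pointwise. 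Working locally on an affine $\Spec R$ with $I = (f_1,\dots,f_r)$, each $f_j$ vanishes on $\supp(M)$, so $f_j \in \sqrt{\Ann_R M}$, and since $\Ann_R M$ is radical by \autoref{t.AnnRadical}, we get $f_j \in \Ann_R M$, i.e.\ $f_j M = 0$. Hence $IM = 0$, which is exactly $M = M[I]$.

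\textbf{Anticipated obstacle.} The main point to handle carefully is the interplay between set-theoretic and scheme-theoretic support: the hypothesis ``supported in $Y$'' a priori only constrains the underlying set of $\supp(M)$, so I must be sure that $I \subseteq \sqrt{\Ann_{\CO_X} M}$ follows (each local generator of $I$ lies in every prime in $\supp(M)$, hence in the radical), and then invoke radicality of $\Ann_{\CO_X} M$ to upgrade this to $I \subseteq \Ann_{\CO_X}M$. I would also note that since all the modules and ideals are quasi-coherent and $X$ is locally Noetherian, it suffices to check $IM = 0$ on an affine cover and on stalks, and that surjectivity of $\Ca$ localizes (as recorded in the proof of \autoref{t.ImagesStabilize} and in \autoref{t.MinimalLocalizes}), so the reduction to the local radical-ideal argument is legitimate.
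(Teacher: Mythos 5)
Your proof is correct and follows essentially the same route as the paper: both directions hinge on the observation that coherence reduces ``supported in $Y$'' to $I\subseteq\sqrt{\Ann_{\CO_X}M}$, and then \autoref{t.AnnRadical} (radicality of the annihilator, using surjectivity of $\Ca$) upgrades this to $IM=0$. The paper states this more tersely, but the content is identical.
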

\begin{proof}
Let $I$ be the sheaf of ideals defining $Y \subseteq X$. Then a coherent $M$ is supported in $Y$ if and only if some power $I^n$ annihilates $M$, \ie~$I^nM=0$. Since we assumed that the structural map of $M$ is surjective this is, by \autoref{t.AnnRadical}, equivalent to $IM=0$, and hence $M[I]=M$.
\end{proof}
\begin{prop}
\label{t.Kashiwara}
Let $i:Y \to X$ be a closed embedding. Then the functors $i_*$ and $i^\flat$ induce an equivalence of categories of Cartier crystals $\Crys(Y)$ on $Y$ and $\Crys_Y(X)$, the Cartier crystals on $X$ with support in $Y$.
\end{prop}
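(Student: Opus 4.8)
The plan is to establish the Kashiwara equivalence by exhibiting $i_*$ and $i^\flat$ as mutually inverse functors on the level of crystals, restricted to the appropriate subcategories. First I would verify that both functors are well defined on crystals: by \autoref{t.basicFuncForCrys} both $i_*$ and $i^\flat$ preserve nil-isomorphisms and coherence (for closed embeddings), so they descend to functors on Cartier crystals. I would then check that $i_*$ actually lands in $\Crys_Y(X)$, which is immediate since any Cartier module of the form $i_*N$ is annihilated by the ideal sheaf $I$ defining $Y$, hence supported on $Y$; and conversely that $i^\flat$ sends crystals supported on $Y$ back to $\Crys(Y)$, which needs no constraint since $i^\flat$ always produces a module on $Y$.

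The core of the argument is to produce the two natural isomorphisms of crystals $i^\flat i_* \CN \cong \CN$ and $i_* i^\flat \CM \cong \CM$. The first is already essentially done at the level of Cartier modules: \autoref{t.i*ibEquiv} gives $i^\flat i_* N \cong N$ as an honest isomorphism of Cartier modules, so it descends directly to a crystal isomorphism. The second direction is where the passage to crystals is genuinely used. \autoref{t.i*ibEquiv} only yields $i_* i^\flat M \cong M[I]$, and the inclusion $M[I] \hookrightarrow M$ is typically \emph{not} an isomorphism of Cartier modules. The plan is to show it is nonetheless a \emph{nil-isomorphism} whenever $\CM$ is supported on $Y$, which by \autoref{t.CrysNilp} suffices to identify $i_* i^\flat \CM$ with $\CM$.

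To carry out this second step I would reduce to the minimal representative. By \autoref{t.ExistenceMinimal} I may assume $M$ is minimal, in particular that its structural map $C$ is surjective. Then the hypothesis that $\CM$ is supported on $Y$ means $\supp(M) = \suppcrys(M) \subseteq Y$, so by the preceding lemma (using surjectivity of $C$ and \autoref{t.AnnRadical}) we get $M = M[I]$ exactly. Hence for the minimal representative the inclusion $M[I] \hookrightarrow M$ is already an equality, and \autoref{t.i*ibEquiv} gives $i_* i^\flat M \cong M[I] = M$ on the nose; descending to crystals yields the desired isomorphism. Since $i^\flat$ and $i_*$ are functorial and the two unit/counit isomorphisms are compatible (both derived from the single identity of \autoref{t.i*ibEquiv}), this exhibits the claimed equivalence of categories.

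\textbf{The main obstacle} I anticipate is the second natural isomorphism: controlling the discrepancy between $i_* i^\flat M = M[I]$ and $M$ for a general representative. The clean way around this is precisely to pass to minimal (equivalently surjective-structural-map) representatives, where \autoref{t.AnnRadical} forces the annihilator to be radical and thus collapses $M[I]$ onto all of $M$; the subtlety to handle with care is the verification that these isomorphisms are natural in $\CM$ and $\CN$, so that one genuinely obtains an equivalence rather than merely a bijection on isomorphism classes.
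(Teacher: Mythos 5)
Your proposal is correct and takes essentially the same route as the paper: the paper likewise invokes \autoref{t.i*ibEquiv} for $i^\flat i_* = \id$ and $i_*i^\flat M = M[I]$, then replaces the representing module by $\underline{M}$ (surjective structural map) so that the preceding lemma forces $M = M[I]$, giving $i_*i^\flat \cong \id$ on crystals. Your only deviation is passing to the full minimal representative via \autoref{t.ExistenceMinimal} rather than just $\underline{M}$, which is harmless since minimality already implies surjectivity of the structural map.
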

\begin{proof}
In \autoref{t.i*ibEquiv} we have seen that $i^\flat i_* = \id$ and $i_*i^\flat M = M[I]$ on Cartier modules. But the two functors on Crystals are defined by evaluating them on representing Cartier modules, \cf~\autoref{t.basicFuncForCrys}. We may hence choose the Cartier module $M$ representing a crystal $\CM$ to have surjective structural map, by replacing $M$ by $\underline{M}$. Then the support of $\CM$ is equal to the support of $M$, and now the preceding lemma shows $i_*i^\flat \cong \id$ on Cartier crystals.
\end{proof}

\begin{rem}
It is useful to point out that the property of minimality for coherent Cartier modules is preserved by the local cohomology functor $H^0_I(\blank)$. Furthermore, if $M$ has surjective structural map, so in particular if $M$ is minimal, \autoref{t.AnnRadical} shows that $H^0_I(M)=M[I]$. To check minimality of $H^0_I(M)$ we only need to show that $C(H^0_I(M))=H^0_I(M)$. But if $I^n m= 0$, and $m=C^e(m')$ for some $m' \in M$ (since, by minimality $C^e(M)=M$) then $C^e((I^n)^{[q^e]})=I^n C^e(m')=I^n m= 0$. This means that the $\CO_X$-submodule $(I^n)^{[q^e]}m'$ of $M$ is killed by $C^e$. Again by minimality of $M$ this implies that $(I^n)^{[q^e]}m'=0$, hence $m' \in H^0_I(M)$. This implies that for a closed immersion $i: Y \to X$ the functor $i^\flat$ preserves minimality. Furthermore, since $i$ is affine, $i_*$ is exact and it follows that $i_*$ preserves minimality. Hence the preceding equivalence for Cartier crystals may also be thought of as an explicit equivalence on minimal coherent Cartier modules.
\end{rem}

\section{Finite length for Cartier crystals}
\label{s.FiniteLength}

In this section we prove our main result about coherent Cartier modules. Namely that every Cartier module has \emph{up to nilpotence} finite length. From this result we will later derive a series of consequences, recovering and extending other finiteness-results of modules under the presence of an action of the Frobenius.

The following result of Gabber is the key technical ingredient to showing that, up to nilpotence, coherent Cartier modules have finite length.

\begin{prop}[\protect{\cite[Lemma 13.2]{Gabber.tStruc}}]
\label{thm.opennonilquot}
Let $X$ be irreducible and $\Fr$-finite, and let $M$ be a coherent Cartier module on $X$. Then there is an open subset $U \subseteq X$ such that for all $x \in U$ non generic\footnote{we call $x$ non-generic in $U$ if $x$ is not the generic point of an irreducible component of $U$.} we have:
\begin{equation}\tag{$\star$}
\label{cond.finitelengthnil}
    \text{All finite length Cartier quotients of $M_x$ are nilpotent.}
\end{equation}
\end{prop}
\begin{proof}
We may replace $M$ by $\underline{M}$ and assume that the structural map of $M$ is surjective.\footnote{For this, one has to observe that \autoref{cond.finitelengthnil} for $\underline{M}$ at a point $x$ implies \autoref{cond.finitelengthnil} for $M$ at $x$. To ease notation we temporarily replace $M$ by $M_x$. If $M/N$ is a finite length quotient, then the submodule $\underline{M}/(N \cap \underline{M})$ has finite length. By \autoref{cond.finitelengthnil} for $\underline{M}$ it is hence nilpotent. As a quotient of the nilpotent $M/\underline{M}$, $M/(\underline{M}+N)$ is nilpotent. But then, since nilpotence is preserved under extensions by \autoref{t.NilSerreSub}, it follows that $M/N$ is nilpotent as desired.} The surjectivity of the structural map guarantees that the nil-radical $I$ of $\CO_X$ annihilates $M$ by \autoref{t.AnnRadical}, since $I^n=0$ for $n \gg 0$. Hence we may replace $X$ by $X_{red}$ and assume that $X$ is reduced, and hence integral.

We may further replace $M$ by $\overline{M}=M/M_{\nil}$ and hence assume that there are no nilpotent Cartier submodules.\footnote{For this, one has to check that for a point $x \in X$ condition \autoref{cond.finitelengthnil} for $\overline{M}_x$ implies \autoref{cond.finitelengthnil} for $M_x$. To simplify notation we may temporarily replace $M$ by $M_x$. Take $N \subseteq M$ such that the quotient $M/N$ has finite length. Then $M/(M_{\nil}+N)=\overline{M}/(N/(M_{\nil}\cap N))$ has finite length, and is hence nilpotent by assumption that \autoref{cond.finitelengthnil} holds for $\overline{M}$. But clearly, $M_{\nil} \cap N$ is nilpotent as well, and hence so is $M/N$ as an extension of nilpotents by \autoref{t.NilSerreSub}.} Since we assume that $X$ is $F$-finite, this condition means that the adjoint to the structural map
\[
    M \to[\kappa] \Fr^\flat M = \SHom_{\CO_X}(\Fr_*\CO_X,M)
\]
is injective, \cf~\autoref{t.CartAdjoint} and \autoref{t.nilkappa}. Let $U=X_{reg}$ be the dense open (since $X$ is reduced and $\Fr$-finite implies excellent) regular locus of $M$. By a result of Kunz \cite{Kunz}, $\Fr$ is finite flat on $U$. Replacing $X$ by $U$, $\Fr_*\CO_X$ is a locally free $\CO_X$-module of finite rank. Hence,  $\Fr^\flat=\SHom_{\CO_X}(\Fr_*\CO_X,\blank)$ is an exact functor. Further shrinking $U$ we may assume that $M$ is (locally) free of rank $r$, and hence so is $\Fr^\flat M$. Shrinking $U$ once again we may assume that $\kappa$ is surjective, hence an isomorphism (we already arranged that $\kappa$ is injective by killing $M_{\nil}$). For $x \in U$ not the generic point of $U$, let $M_x \onto N$ be a Cartier module quotient of finite length on $\Spec \CO_{X,x}$. Since $x$ is not the generic point, $\CO_{X,x}=:(R,\frm)$ is a regular local ring of dimension $\geq 1$. The exactness of $\Fr^\flat$ implies that $N \to \Fr^{\flat}N$ is still surjective, hence $\length{N} \geq \length(\Fr^\flat N)$. But since $\dim R \geq 1$, $\Fr^\flat = \Hom_R(\Fr_*R,\blank)$ multiplies the length of finite length modules by a power of $q$.\footnote{Since $\Fr^\flat$ is exact it is enough to show that $\length(\Fr^\flat(R/\frm)) = \length(R/\frm^{[q]})$. For this we note that $\Hom_R(\Fr_*R,R/\frm)=\Hom_{R/\frm}(\Fr_*(R/\frm^{[q]}),R/\frm)$ since for $x \in \frm$ we have $0=x \phi(r) = \phi(x^q r)$ for all $\phi \in \Hom_R(\Fr_*R,R/\frm)$. But $R/\frm^{[q]}$ is free of dimension equal to $\length(R/\frm^{[q]})$ over $R/\frm$, hence so is $\Hom_{R/\frm}(\Fr_*(R/\frm^{[q]}),R/\frm)$ over $\Fr_*(R/m)$. This shows the claim.} But this implies that $N=0$, so, in particular, $N$ is nilpotent as desired.
\end{proof}
\begin{rem}
The $F$-finiteness was crucially used in this proof when we employed the adjointness of $\Fr^\flat$ and $\Fr_*$ for the finite Frobenius $\Fr$. However, we expect that the result itself holds much more generally. At least when $R$ is local, or essentially of finite type over a regular local ring. Faithfully flatness of completion can be used to reduce the local case to the complete local case. Then our Kashiwara equivalence \autoref{t.Kashiwara} reduces to the regular case.
\end{rem}

\begin{cor}
\label{t.surjnilquotzero}
Let $X$ be $\Fr$-finite and $(M,\Ca)$ a coherent Cartier module with surjective structural map. Then there is a dense open subset $U \subseteq X$ such that for each non-generic $x \in U$, the localization $M_x$ does not have finite length Cartier quotients.
\end{cor}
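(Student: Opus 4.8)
The plan is to read this off from \autoref{thm.opennonilquot}, the passage from ``nilpotent'' to ``zero'' being supplied by the surjectivity hypothesis. The decisive elementary remark is that a Cartier module $N$ with $\Ca(N)=N$ that is also nilpotent must vanish, since $N=\Ca(N)=\Ca^2(N)=\cdots=\Ca^e(N)=0$ as soon as $\Ca^e(N)=0$. Two stability facts feed into this: any Cartier quotient of a module with surjective structural map again has surjective structural map, and the formation of the image of $\Ca$ commutes with localization (as used in the proof of \autoref{t.ImagesStabilize}), so every localization $M_x$ still has $\Ca$ surjective. Consequently it suffices to exhibit a dense open $U$ for which every finite length Cartier quotient of $M_x$ is merely \emph{nilpotent} at each non-generic $x\in U$; surjectivity then upgrades each such quotient to $0$, which is exactly the assertion.

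To apply \autoref{thm.opennonilquot}, which requires $X$ irreducible, I would reduce to that case by treating the (finitely many) irreducible components $X_1,\dots,X_n$ of $X$ separately. Deleting the pairwise intersections, $W\defeq X\setminus\bigcup_{i<j}(X_i\cap X_j)$ is a dense open subset on which the components are disjoint, so each $W_i\defeq X_i\cap W$ is an irreducible, $\Fr$-finite open subscheme of $X$, and the restriction $M|_{W_i}$ is a coherent Cartier module on $W_i$ by \autoref{t.RestrOpen}, with surjective structural map. Applying \autoref{thm.opennonilquot} to $M|_{W_i}$ produces a nonempty, hence dense, open $V_i\subseteq W_i$ containing the generic point $\eta_{X_i}$ and such that all finite length Cartier quotients of $M_x$ are nilpotent for every non-generic $x\in V_i$. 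Setting $U\defeq\bigcup_i V_i$ gives an open subset that is dense in $X$ (each $V_i$ is dense in $X_i$) whose pieces $V_i$ are disjoint. A non-generic point $x\in X$ lying in $U$ then lies in a unique $V_i$ and is non-generic there (the generic point of $V_i$ is $\eta_{X_i}$, a generic point of $X$), so by the proposition together with the opening remark $M_x$ has no nonzero finite length Cartier quotient.

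I do not expect a genuine obstacle: the substance lies entirely in \autoref{thm.opennonilquot}, with surjectivity serving only to convert nilpotent quotients into zero quotients. The point that merits care is keeping the open set dense in all of $X$, not merely in $\supp M$ or in a single component: \autoref{thm.opennonilquot} must be applied to the whole irreducible $W_i$ rather than to the support of $M$, so that $V_i$ is dense in $W_i$ and the pieces glue to an open set dense in $X$. This causes no trouble, because on the locus to which the proposition shrinks its open set $M$ is locally free; there $M$ vanishes identically wherever its support is not generically full, so the only finite length quotient is $0$, while on the full-rank locus the proposition already supplies the nilpotence that, after the upgrade, gives the required vanishing.
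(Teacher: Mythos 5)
Your argument is correct and follows essentially the same route as the paper: decompose $X$ into irreducible components, pass to the open pieces where the components are disjoint (your $W_i$ coincide with the paper's $X_i'=X-\bigcup_{j\neq i}X_j$), apply \autoref{thm.opennonilquot} on each piece, take the union, and use that surjectivity of $\Ca$ persists to localizations and quotients so that nilpotent becomes zero. The extra care you take over density of $U$ in all of $X$ and over matching the two notions of ``non-generic'' is sound and merely makes explicit what the paper leaves implicit.
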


\begin{proof}
Let $X_1,\ldots,X_n$ be the irreducible components of $X$. On each irreducible open subset $X_i'=X-\bigcup_{j\neq i} X_j$ of $X$ consider the Cartier module $M|_{X'_i}$ (\autoref{t.RestrOpen}). Let $U_i \subseteq X_i'$ be an open subset as in \autoref{thm.opennonilquot}, and let $U=\bigcup U_i$. Since $\Ca$ is surjective on $M$ the same is true for $M_x$ for all $x \in X$ and also for all quotients $M_x/N$. By construction of $U$, for all $x \in U$ the finite length quotients of $M_x$ are nilpotent and hence zero.
\end{proof}

\begin{cor}
\label{t.UniformSupport}
Let $X$ be $\Fr$-finite and $(M,\Ca)$ a coherent Cartier module with surjective structural map. There is a closed subset $Y = V(I) \subseteq X$, not containing an irreducible component of $X$, such that for all Cartier module quotients $M/N$ of $M$ whose support $\supp M/N$ does not contain an irreducible component of $X$, one has $\supp M/N \subseteq Y$. In fact $\Ann_{\CO_X} M/N \supseteq \sqrt{I} \supseteq I$.
\end{cor}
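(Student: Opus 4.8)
The plan is to feed the dense open set from \autoref{t.surjnilquotzero} directly into the statement, and then argue that the support of any relevant quotient must avoid it. Concretely, let $U \subseteq X$ be the dense open subset provided by \autoref{t.surjnilquotzero}, so that for every non-generic point $x \in U$ the localization $M_x$ admits no nonzero finite-length Cartier quotient. I would set $Y := X \setminus U$, take $I$ to be the radical ideal sheaf cutting out $Y$ with its reduced structure, so that $Y = V(I)$. Since $U$ is dense it contains every generic point of $X$ (a dense open subset contains the generic point of every irreducible component), hence $Y$ contains no irreducible component of $X$, as required. It then remains to prove that whenever $M/N$ is a Cartier quotient whose support $W := \supp(M/N)$ contains no irreducible component of $X$, one has $W \subseteq Y$, equivalently $W \cap U = \emptyset$.

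I would establish this by contradiction: suppose $W \cap U \neq \emptyset$. As $U$ is open, $W \cap U$ is a nonempty open subset of $W$. Choosing any of its points and passing to the irreducible component $W_0$ of $W$ containing it, the set $(W\cap U)\cap W_0$ is a nonempty open subset of the irreducible space $W_0$ and therefore contains its generic point $x$ (a nonempty open of an irreducible space contains the generic point). In particular $x \in U$. By hypothesis $W$ contains no component of $X$, so $x$ is not a generic point of $X$; since the generic points of the irreducible components of $U$ are precisely those of $X$ lying in $U$, the point $x$ is non-generic in $U$ in the sense of \autoref{t.surjnilquotzero}.

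The crux is to manufacture a \emph{nonzero finite-length} Cartier quotient of $M_x$, thereby contradicting the defining property of $U$. Here I would exploit the choice of $x$ as a \emph{generic} point of a component of $W$: passing to $\Spec \CO_{X,x}$, the localization $(M/N)_x = M_x/N_x$ is a Cartier quotient of $M_x$ (localization preserves the Cartier structure and surjections, \autoref{t.CartierLocalizes}), it is nonzero because $x \in W$, and its support in $\Spec \CO_{X,x}$ is concentrated at the closed point, since $x$ is a minimal prime of $\Ann_{\CO_X}(M/N) = \sqrt{\Ann_{\CO_X}(M/N)}$ (radical by \autoref{t.AnnRadical}, as the quotient $M/N$ inherits a surjective structural map). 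A finitely generated module over a Noetherian local ring supported only at the maximal ideal has finite length, so $(M/N)_x$ is a nonzero finite-length Cartier quotient of $M_x$, contradicting \autoref{t.surjnilquotzero}. I expect this to be the main obstacle: a naive construction such as $(M/N)_x / \frm_x (M/N)_x$ need not be a Cartier quotient, because $\frm_x(M/N)_x$ is typically not $\Ca$-stable; it is precisely the passage to a generic point of the support that produces a genuine finite-length Cartier quotient for free, and identifying this maneuver is the decisive point.

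Finally, for the annihilator assertion I would argue purely formally: from $\supp(M/N) = V(\Ann_{\CO_X}(M/N)) \subseteq V(I) = Y$ one obtains $\sqrt{I} \subseteq \sqrt{\Ann_{\CO_X}(M/N)}$, and since $\Ann_{\CO_X}(M/N)$ is radical (again \autoref{t.AnnRadical}) this reads $\Ann_{\CO_X}(M/N) \supseteq \sqrt{I} \supseteq I$, completing the proof.
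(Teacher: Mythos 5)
Your proposal is correct and follows essentially the same route as the paper: both take $Y=X\setminus U$ with $U$ from \autoref{t.surjnilquotzero}, localize at a generic point of a component of $\supp(M/N)$ to produce a nonzero finite-length Cartier quotient of $M_x$ (impossible for non-generic $x\in U$), and deduce the annihilator statement from \autoref{t.AnnRadical}. The only cosmetic difference is that you argue by contradiction where the paper argues directly that each component's generic point lies outside $U$.
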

\begin{proof}
Let $U$ be as in the preceding corollary and let $Y=X-U$. Let $Z=V(\frp)$ be an irreducible component of $\supp M/N$. By assumption $\frp$ is not the generic point of an irreducible component of  $U$ (=generic point of irreducible component of $X$). The localization $(M/N)_{\frp}=M_{\frp}/N_{\frp}$ is a non-zero and finite length module over $\CO_{X,\frp}$. Since we assumed that the structural map of $M$, and hence of $M_{\frp}/N_{\frp}$ is surjective, it cannot be nilpotent, hence $\frp \not\in U$ (since by assumption $\frp$ is not the generic points of an irreducible component of $U$ which would be the only other option). It follows that $Z \subseteq Y$, hence $\supp M/N \subseteq Y$. Again, since $M/N$ has surjective structural map, the ideal $\Ann M/N$ is radical by \autoref{t.AnnRadical}, hence $I \subseteq \sqrt{I} \subseteq \Ann M/N$.
\end{proof}

As a corollary of this result one derives by induction the following.
\begin{prop}[\cite{Gabber.tStruc}]
\label{t.GabberFinite}
Let $X$ be $\Fr$-finite and $M$ a coherent Cartier module. There is a finite subset $S \subseteq X$ such that for all $x \in X \setminus S$
\begin{equation*}\tag{$\star$}
    \text{the finite length quotients of $M_x$ are nilpotent.}
\end{equation*}
\end{prop}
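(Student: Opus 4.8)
The plan is to induct on $\dim X$, after two preliminary reductions. First, by \autoref{t.QuotSurjStruct} I would replace $M$ by the nil-isomorphic submodule $\underline{M}$ with surjective structural map; the reduction argument used in the footnotes of \autoref{thm.opennonilquot} shows that property $(\star)$ for $\underline{M}_x$ implies $(\star)$ for $M_x$, so a finite set $S$ that works for $\underline{M}$ also works for $M$. I may thus assume $\Ca$ is surjective, in which case a finite length Cartier quotient is nilpotent precisely when it is zero, since a Cartier module with surjective structural map is nilpotent if and only if it vanishes. Second, because $M_x=0$ off $\supp M$, I may replace $X$ by $\supp M$, which by \autoref{t.AnnRadical} is reduced and (being the support of a coherent sheaf) has only finitely many irreducible components; I collect the generic points of these components into $S$ from the outset, and note that the closed subscheme $\supp M$ is again $\Fr$-finite.

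With $\Ca$ surjective, \autoref{t.surjnilquotzero} furnishes a dense open $U\subseteq X$ such that every non-generic point $x\in U$ has no nonzero finite length Cartier quotient of $M_x$, so $(\star)$ holds at all such $x$. The only points of $U$ not yet controlled are the finitely many generic points of $X$, already placed in $S$. It remains to treat the closed set $Y=X\setminus U$, which by \autoref{t.UniformSupport} is of the form $V(I)$ and contains no irreducible component of $X$; hence every irreducible closed subset of $Y$ of dimension $\dim X$ would be a component of $X$, which is excluded, so $\dim Y<\dim X$. This is where the inductive hypothesis will be applied.

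The heart of the argument is to show that a bad point $x\in Y$ --- one at which $M_x$ admits a nonzero finite length Cartier quotient $M_x\onto Q$ --- is also bad for a suitable coherent Cartier module living on $Y$. Since $Q$ inherits a surjective structural map, \autoref{t.AnnRadical} shows that $\Ann_{\CO_{X,x}}Q$ is radical; as $Q\neq 0$ has finite length it is $\frm_x$-primary, whence $\Ann_{\CO_{X,x}}Q=\frm_x\supseteq I_x$ and therefore $I_xQ=0$. Consequently $I_xM_x$, and hence the localization $M'_x$ of the smallest Cartier submodule $M'\subseteq M$ containing $IM$, lies in $\ker(M_x\onto Q)$, so $Q$ is a nonzero (hence non-nilpotent) finite length Cartier quotient of $(M/M')_x$. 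Now $M/M'$ is a coherent Cartier module annihilated by $I$, i.e.\ a coherent Cartier module on the $\Fr$-finite scheme $Y$ with $\dim Y<\dim X$, and its localization at $x$ is exactly $(M/M')_x$. By the inductive hypothesis applied to $M/M'$ there is a finite set $S_Y\subseteq Y$ outside of which all finite length Cartier quotients of $(M/M')_y$ are nilpotent; the displayed quotient $Q$ then forces every bad $x\in Y$ to lie in $S_Y$. Taking $S$ to be the union of the generic points of $X$ with $S_Y$ completes the induction, the base case $\dim X=0$ being immediate since there $S$ may be taken to be the finitely many points of $X$.

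The step I expect to be the main obstacle is precisely this descent: passing from quotients of the \emph{localizations} $M_x$, which is what $(\star)$ concerns, to a single \emph{global} Cartier module on the lower-dimensional $Y$ to which induction can be applied. What makes it work is the rigidity supplied by surjectivity of the structural map, through \autoref{t.AnnRadical}: any finite length quotient is killed by the maximal ideal, hence by $I$, and so automatically factors through $M/M'$. Beyond this one must only keep track of the bookkeeping --- that only the finitely many generic points escape \autoref{t.surjnilquotzero}, and that forming $M/M'$ commutes with localization at points of $Y$ --- but these are routine once the factoring is in hand.
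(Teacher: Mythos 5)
Your proposal is correct and follows essentially the same route as the paper: reduce to surjective structural map, use \autoref{thm.opennonilquot} (via \autoref{t.surjnilquotzero}/\autoref{t.UniformSupport}) to get the dense open $U$, and then show that any nonzero finite length quotient of $M_x$ for $x\in Y=X\setminus U$ is killed by $I_x$ (radicality of annihilators, \autoref{t.AnnRadical}) and hence factors through $(M/\sum_i \Ca^i(IM))_x$, so that induction on dimension applies to this Cartier module on $Y$. The only differences from the paper's write-up (working with all of $Y$ at once rather than its irreducible components, and invoking \autoref{t.surjnilquotzero} instead of reducing to $X$ irreducible) are cosmetic.
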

\begin{proof}
As before we may replace $M$ by $\underline{M}$ and assume that $\Ca$ is surjective.
We first observe that if $Y \subseteq X$ is a closed irreducible subset given by a sheaf of ideals $I$, then for all $x \in Y$ the Cartier module $M$ on $X$ satisfies \autoref{cond.finitelengthnil} at $x$ if the Cartier module $\widetilde{M}=M/\sum_{i\geq 0} \Ca^i(IM)$ on $Y$ satisfies \autoref{cond.finitelengthnil}: To see this let $N \subseteq M_x$ be such that $M_x/N$ has finite length. Since $\Ca$ is surjective, this implies that $\frm_x M_x \subseteq N$ by \autoref{thm.finitelentghlenth1}. Since $x \in Y$, and hence $I_x \subseteq \frm_x$ we have $I_xM_x \subseteq N$. It follows that $\sum_{i \geq 0} \Ca^i(I_xM_x) \subseteq N$ since the former is the smallest Cartier submodule of $M_x$ that contains $I_xM_x$. If we denote by $\widetilde{N}$ the image of $N$ in $\widetilde{M}_x$, it can be checked (using the snake lemma, for example) that the natural map $M_x/N \to \widetilde{M}_x/\widetilde{N}$ is an isomorphism. But \autoref{cond.finitelengthnil} for $\widetilde{M}$ on $Y$ implies that the latter is nilpotent.

To conclude the argument we can now use induction on dimension. By considering each irreducible component of $X$ separately we may reduce to $X$ irreducible. Then \autoref{thm.opennonilquot} yields an open set $U$ (minus the generic point) where \autoref{cond.finitelengthnil} holds for $M$. By induction we know that the conclusion of the theorem holds for $\widetilde M$ on each of the finitely many irreducible components of the complement of $U$. This finishes the argument.
\end{proof}

Gabber uses the preceding proposition in \cite{Gabber.tStruc} as a crucial ingredient in his proof that the category of perverse constructible sheaves of $\BF_q$-vectorspaces $\Perv_c(X_{et},\BF_q) \subseteq D^b_c(X_{et},\BF_q)$ is Noetherian and Artinian. Via Grothendieck-Serre duality and using \cite{BoPi.CohomCrys} or \cite{EmKis.Fcrys}, and \cite{BliBoe.CartierCrys} the category of Cartier crystals is expected to be equivalent to $\Perv_c(X_{et},\BF_q)$. This, in turn would imply the finite length for Cartier crystals. The purpose of this section is to give a direct argument for this expected finite length of Cartier crystals:

\begin{thm}
\label{t.FiniteLenth}
Let $X$ be a scheme satisfying \autoref{thm.opennonilquot} (e.g. $X$ is $\Fr$-finite) and $M$ a coherent Cartier module. Then any descending chain
\[
    M \supseteq M_1 \supseteq M_2 \supseteq M_3 \supseteq \ldots
\]
of Cartier submodules of $M$ stabilizes up to nilpotence. This means that for $i \gg 0$ the quotients $M_i/M_{i+1}$ are nilpotent.
\end{thm}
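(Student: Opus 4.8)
The plan is to argue by induction on $d=\dim X$, using Gabber's uniform-support statement \autoref{t.UniformSupport} to remove one dimension at each step. The entire difficulty is concentrated in the fact that the closed set $Y$ produced there is \emph{independent} of the member of the chain, so that the whole tail of the chain can be pushed onto a single lower-dimensional scheme; without this uniformity the induction would not close.

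\textbf{Reduction to surjective structural maps.} First I would replace $M$ by $\underline{M}=\Ca^e(M)$ and each $M_i$ by $\underline{M_i}=\Ca^e(M_i)$ for $e\gg0$ (\autoref{t.ImagesStabilize}, \autoref{t.QuotSurjStruct}); for $e$ large these images are nested, so $\underline{M}\supseteq\underline{M_1}\supseteq\underline{M_2}\supseteq\cdots$ is again a descending chain, now with all structural maps surjective. This reduction is harmless: since $\underline{M_{i+1}}\subseteq M_{i+1}\subseteq M_i$ and $M_i/\underline{M_i}$ is nilpotent by \autoref{t.QuotSurjStruct}, if $\underline{M_i}/\underline{M_{i+1}}$ is nilpotent then $M_i/\underline{M_{i+1}}$ is an extension of nilpotents, hence nilpotent by the Serre property \autoref{t.NilSerreSub}, and its quotient $M_i/M_{i+1}$ is nilpotent as well. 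Thus eventual nilpotence for the reduced chain implies it for the original, and I may assume $M$ and all $M_i$ have surjective structural maps; then every quotient $M/M_i$ automatically does too.

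\textbf{The induction and the dimension drop.} For $d=0$ I would finish directly: a module with surjective structural map has reduced support by \autoref{t.AnnRadical}, so $\supp M$ is a finite set of $\Fr$-finite points, $M$ is a finite-dimensional Cartier module over a product of $\Fr$-finite fields, and by \autoref{t.CartierFieldBasics} it has only finitely many Cartier submodules; the chain is then literally eventually constant. For $d\geq1$ I would first truncate so that the generic ranks $\dim_{\kappa(\eta)}(M_i)_\eta$ at the finitely many generic points $\eta$ of $X$ are constant in $i$ (they are non-increasing and bounded below), and then rename the first surviving term as $M$. Now for every $i$ the quotient $M/M_i$ has the same generic ranks as $M$, \ie~$\supp(M/M_i)$ contains no irreducible component of $X$. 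This is exactly the hypothesis of \autoref{t.UniformSupport}, which produces a \emph{single} closed subset $Y=V(I)$, independent of $i$, not containing any component of $X$ (so $\dim Y<d$), with $I\subseteq\Ann(M/M_i)$, \ie~$IM\subseteq M_i$ for all $i$.

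\textbf{Passing to $Y$.} Let $M'=\sum_{j\geq0}\Ca^j(IM)$ be the smallest Cartier submodule containing $IM$. Since each $M_i$ is a Cartier submodule containing $IM$, one has $M'\subseteq M_i$ for all $i$, and $I$ annihilates $M/M'$, so $M/M'$ is a coherent Cartier module supported on $Y$; via \autoref{t.i*ibEquiv} and \autoref{t.Kashiwara} it is a Cartier module on the closed subscheme $Y$, and $\{M_i/M'\}$ is a descending chain of Cartier submodules there. As $Y$ is again $\Fr$-finite with $\dim Y<d$, the induction hypothesis yields that $(M_i/M')/(M_{i+1}/M')\cong M_i/M_{i+1}$ is nilpotent for $i\gg0$; transferring back through the first reduction finishes the proof. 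I expect the main obstacle to be exactly the bookkeeping that keeps $Y$ uniform over the entire tail of the chain, which is where the $N$-independence in \autoref{t.UniformSupport} is indispensable.
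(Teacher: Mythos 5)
Your proposal is correct and follows essentially the same route as the paper: reduce to surjective structural maps via \autoref{t.ImagesStabilize}, induct on $\dim X$, truncate so the generic ranks stabilize, invoke the uniform closed set $Y=V(I)$ from \autoref{t.UniformSupport} to get $IM\subseteq M_i$ for all $i$, and push the tail of the chain onto $Y$ via $M'=\sum_j \Ca^j(IM)$. The only differences are expository (you spell out the harmlessness of the first reduction and the $d=0$ case, where in fact finite length of a coherent module over a zero-dimensional Noetherian scheme already suffices without \autoref{t.CartierFieldBasics}).
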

\begin{proof}
We may replace the given chain by the nil-isomorphic one
\[
    \underline{M} \supseteq \underline{M_1} \supseteq \underline{M_2} \supseteq \underline{M_3} \supseteq \ldots
\]
by \autoref{t.ImagesStabilize} -- since clearly -- $M_i/M_j$ is nilpotent if and only if $\underline{M_i}=\underline{M_j}$. Hence we may assume that for all $i$ we have $\Ca(M_i)=M_i$.
One proceeds by induction on the dimension of $X$, the case of dimension zero being obvious. There are only finitely many steps in this chain where the generic rank on some irreducible component of $X$ can drop. By truncating we may hence assume that on each irreducible component the generic rank is constant in the descending chain. This implies, that for each $i$ the support $\supp M/M_i$ does not contain any irreducible component of $X$. Applying \autoref{t.UniformSupport} we have $Y = V(I) \subseteq X$ of strictly smaller dimension than $X$ such that \emph{for all $i$} the support $\supp M/M_i \subseteq Y$. In fact we even have $IM \subseteq M_i$ for all $i$. This implies that $M' \defeq \sum_{i\geq 0} \Ca^i(IM)$ (which is the smallest Cartier submodule of $M$ containing $IM$) is also contained in each $M_i$, since each $M_i$ is a Cartier submodule containing $IM$. But this implies that the original chain stabilizes if and only if the chain
\[
    M/M' \supseteq M_1/M' \supseteq M_2/M' \supseteq \ldots
\]
stabilizes. The latter, however, is a chain of Cartier submodules of $M/M'$ on $Y$ (since $IM \subseteq M'$) with the property that $\Ca_{M/M'}(M_i/M')=M_i/M'$ for all $i$. By induction we know this chain stabilizes.
\end{proof}
As an immediate corollary we obtain, in the language of Cartier crystal, our main result.
\begin{cor}
\label{t.CrysFiniteLength}
Let $X$ be a scheme satisfying \autoref{thm.opennonilquot} (e.g. $X$ is $\Fr$-finite), then every coherent Cartier crystal is Noetherian and Artinian, \ie~ascending, as well as descending chains of Cartier sub-crystals are eventually constant.
\end{cor}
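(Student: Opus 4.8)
The plan is to transport the statement, which concerns chains in the localized category $\Crys(X)$, back to chains of honest Cartier submodules of a single coherent representative, where \autoref{t.FiniteLenth} and the Noetherianity of the underlying sheaf apply directly. Fix a coherent Cartier module $M$ representing the given crystal $\CM$. The first thing I would set up is the dictionary between subobjects of $\CM$ in $\Crys(X)$ and Cartier submodules of $M$ taken up to nil-isomorphism: by the general formalism of localizing an abelian category at the Serre subcategory of nilpotent modules (\cite{BoPi.CohomCrys}), every monomorphism $\CN \into \CM$ of crystals is represented by an honest inclusion $N \subseteq M$ of coherent Cartier modules --- concretely, one takes a left-fraction representative and intersects its image with $M$ --- and two inclusions $N, N' \subseteq M$ define the same subcrystal precisely when $N/(N\cap N')$ and $N'/(N\cap N')$ are nilpotent. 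I expect this bookkeeping to be the main obstacle, as it is where the abstract localization has to be matched with the concrete lattice of Cartier submodules of $M$; once it is in place, both chain conditions become formal consequences of results already proved.

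Granting the dictionary, the ascending chain condition is immediate. An ascending chain $\CM_1 \subseteq \CM_2 \subseteq \cdots$ of subcrystals lifts to Cartier submodules $N_i \subseteq M$, and replacing $N_i$ by the partial sum $N_1 + \cdots + N_i$ produces an honest ascending chain of $\CO_X$-submodules of $M$. This replacement changes nothing at the level of crystals: for $j \leq i$ the containment $\CM_j \subseteq \CM_i$ forces $N_j/(N_j \cap N_i)$ to be nilpotent, so $(N_1 + \cdots + N_i)/N_i$ is a finite sum of nilpotent submodules and hence nilpotent by \autoref{t.NilSerreSub}, whence $N_1 + \cdots + N_i$ is nil-isomorphic to $N_i$ and still represents $\CM_i$. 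Since $X$ is locally Noetherian and $M$ is coherent, the resulting ascending chain of $\CO_X$-submodules stabilizes, and therefore so does the chain of subcrystals.

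For the descending chain condition I would begin with a chain $\CM \supseteq \CM_1 \supseteq \CM_2 \supseteq \cdots$ and manufacture an honest descending chain of Cartier submodules of $M$. Choosing representatives $M_i \subseteq M$, the containment $\CM_{i+1} \subseteq \CM_i$ translates, via the dictionary, into nilpotence of $M_{i+1}/(M_{i+1}\cap M_i)$; replacing $M_{i+1}$ recursively by $M_{i+1}\cap M_i$ is therefore a nil-isomorphism and yields $M \supseteq M_1' \supseteq M_2' \supseteq \cdots$ with each $M_i'$ still representing $\CM_i$. Now \autoref{t.FiniteLenth} applies verbatim to this chain of actual Cartier submodules: it stabilizes up to nilpotence, meaning $M_i'/M_{i+1}'$ is nilpotent for $i \gg 0$. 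By \autoref{t.CrysNilp} this is exactly the assertion that $\CM_i = \CM_{i+1}$ in $\Crys(X)$ for $i \gg 0$, \ie~the descending chain of subcrystals is eventually constant. Thus both chain conditions reduce, through the submodule dictionary, to the single honest-module statement \autoref{t.FiniteLenth} together with coherence of $M$, and no further geometric input is required.
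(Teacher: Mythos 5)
Your proposal is correct and follows essentially the same route as the paper, which likewise deduces the ascending chain condition from Noetherianness of the coherent representative and the descending chain condition from \autoref{t.FiniteLenth}; you simply make explicit the routine localization bookkeeping (the correspondence between subcrystals and Cartier submodules up to nil-isomorphism) that the paper leaves implicit here and only formalizes later in \autoref{t.subCartSh.subCartCrys}.
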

\begin{proof}
That ascending chains stabilize already follows from the Noetherianness of $X$. The stabilization of descending chains is precisely the statement of \autoref{t.FiniteLenth}.
\end{proof}

As a first application we derive global versions of analogs of results of Enescu and Hochster \cite{EnescuHochster.FrobLocCohom} and Sharp \cite[Corollary 3.11]{Sharp.GradedAnnihil}. The following (and also its proof) is analogous to \cite[Proposition 3.5]{EnescuHochster.FrobLocCohom}
\begin{lem}
Let $M$ be a Cartier module on $X$. Then
\[
   \CS:= \{Y = \suppcrys M/N \,|\, N\text{ a Cartier submodule of }M \}
\]
is a collection of reduced subschemes, closed under taking irreducible components and finite unions.
\end{lem}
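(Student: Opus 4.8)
The reducedness of every member of $\CS$ is immediate: each $Y=\suppcrys(M/N)$ is a closed reduced subscheme by \autoref{z.crysSuppC}. For the two closure properties the plan is first to reduce to a single ambient module with surjective structural map. Replacing $M$ by $M/N$ identifies the quotients $M/N'$ with $N'\supseteq N$ with the quotients of $M/N$, so it suffices to realise the required sets as crystalline supports of quotients of one fixed module; replacing that module by its stable image $\underline{M}$ (\autoref{t.ImagesStabilize}, \autoref{t.QuotSurjStruct}) is harmless, since for $\bar N\subseteq\underline{M}$ the module $M/\bar N$ is an extension of the nilpotent $M/\underline{M}$ by $\underline{M}/\bar N$, whence $\suppcrys(\underline{M}/\bar N)=\suppcrys(M/\bar N)$ lies in $\CS$ by \autoref{l.nilSuppWellDef}. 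After this reduction I may assume $C(M)=M$, so that $\suppcrys=\supp$ throughout (\autoref{z.crysSuppA}) and $\Ann_{\CO_X}M$ is radical by \autoref{t.AnnRadical}.

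For closure under finite unions, given $Y_i=\suppcrys(M/N_i)$ for $i=1,2$, I would show $Y_1\cup Y_2=\suppcrys(M/(N_1\cap N_2))$; note $N_1\cap N_2$ is again a Cartier submodule since $C(N_1\cap N_2)\subseteq C(N_1)\cap C(N_2)\subseteq N_1\cap N_2$. The equality is checked pointwise: at $x\in X$ the module $(M/(N_1\cap N_2))_x$ injects into $(M/N_1)_x\oplus(M/N_2)_x$ and surjects onto each factor. Since nilpotent Cartier modules form a Serre subcategory (\autoref{t.NilSerreSub}), closed under subobjects, quotients and extensions, the left-hand module is nilpotent if and only if both summands on the right are; this says exactly that $x\notin Y_1\cup Y_2$ on one side and $x\notin Y_i$ for both $i$ on the other. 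Iterating handles arbitrary finite unions.

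Closure under irreducible components is the heart of the matter. Let $Z$ be a component of $Y=\supp M$, with generic point $\eta$ and radical ideal sheaf $I=I_Z$, and set $N_Z:=\sum_{i\ge 0}C^i(IM)$, the smallest Cartier submodule of $M$ containing $IM$ (coherent, since the partial sums form an ascending chain in the Noetherian module $M$). Then $I$ annihilates $M/N_Z$, so $\supp(M/N_Z)\subseteq Z$; as $M/N_Z$ again has surjective structural map, this gives $\suppcrys(M/N_Z)\subseteq Z$. For the reverse inclusion I would compute at $\eta$: because $\Ann_{\CO_X}M$ is radical and $Z$ is a component of $\supp M$, the stalk $M_\eta$ is supported only at the closed point of $\Spec\CO_{X,\eta}$ and has radical annihilator, hence $\frm_\eta M_\eta=0$, i.e.\ $M_\eta$ is a module over the residue field $\kappa(\eta)$. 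Since $I_\eta=\frm_\eta$ (as $Z$ is reduced) and the formation of images under $C$ commutes with localization, one gets $(N_Z)_\eta=\sum_i C^i(\frm_\eta M_\eta)=0$, so $(M/N_Z)_\eta=M_\eta\neq0$ and $\eta\in\suppcrys(M/N_Z)$. Therefore $\suppcrys(M/N_Z)=Z$, proving $Z\in\CS$.

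The delicate point — and the step most prone to error — is precisely this reverse inclusion: one must make sure that passing to the Cartier closure $\sum_i C^i(IM)$ does not accidentally fill up the generic stalk at $\eta$ and thereby shrink the support away from $Z$. What rescues the computation is that surjectivity of the structural map forces $\Ann M$ to be radical, collapsing $M_\eta$ to a $\kappa(\eta)$-vector space, so that $\frm_\eta M_\eta=0$ and every $C^i(\frm_\eta M_\eta)$ vanishes. I would emphasize that, in contrast with the finiteness of $\CS$ proved afterwards, this structural lemma requires neither the finite length of Cartier crystals nor the $\Fr$-finiteness of $X$, only the elementary facts of \autoref{s.nilpotence} together with \autoref{t.AnnRadical} and \autoref{l.nilSuppWellDef}.
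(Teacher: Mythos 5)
Your proof is correct, and the key step --- realizing an irreducible component $Z$ of $\suppcrys(M/N)$ as an element of $\CS$ --- is done by a genuinely different construction than the paper's. After essentially the same preliminary reduction to a module $M$ with $C(M)=M$, the paper passes to $Y=\supp M$ (so that $\Ann_{\CO_X}M=0$), lets $I'$ be the radical ideal of the union $Z'$ of the components of $Y$ \emph{other} than $Z$, and takes as witness the torsion submodule $M[I']$, computing globally that $\Ann_{\CO_X}(M/M[I'])=\Ann_{\CO_X}(I')=I_Z$. You instead take the Cartier closure $N_Z=\sum_{i}C^i(I_ZM)$ --- the same device used later in the proofs of \autoref{t.GabberFinite} and \autoref{t.FiniteLenth}, and in fact the smallest Cartier submodule $N$ with $\supp(M/N)\subseteq Z$ --- and verify $\suppcrys(M/N_Z)=Z$ by localizing at the generic point $\eta$ of $Z$, where radicality of $\Ann_{\CO_X}M$ (\autoref{t.AnnRadical}) forces $\frm_\eta M_\eta=0$ and hence $(N_Z)_\eta=0$; you correctly flag that this is the only delicate point. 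Both arguments are sound and equally elementary, and you are right that neither uses $\Fr$-finiteness; the paper's witness $M[I']$ avoids any localization, while yours meshes more directly with the inductive mechanism of \autoref{t.FiniteLenth}. Your pointwise argument for finite unions (embedding $M/(N_1\cap N_2)$ into $M/N_1\oplus M/N_2$ and invoking the Serre-subcategory property of nilpotents) is likewise correct and is a slightly more careful rendering of the paper's one-line annihilator identity $\Ann_{\CO_X} M/(N\cap N')=\Ann_{\CO_X} M/N \cap \Ann_{\CO_X} M/N'$.
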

\begin{proof}
By \autoref{t.AnnRadical} the elements of $\CS$ are reduced subschemes. That $\CS$ is closed under taking finite unions is a consequence of the equality $\Ann_{\CO_X} M/(N\cap N')=\Ann_{\CO_X} M/N \cap \Ann_{\CO_X} M/N'$. It remains to show that the irreducible components  of every $Y = \suppcrys M/N \in \CS$ lie again in $\CS$. For this, we may replace $(X,M)$ by $(Y,\underline{M/N})$. Then we need to show that the irreducible components of $X$ lie in $\CS$ provided that $X=\supp(M)=\suppcrys(M)$. Let $Z$ be such a component, let $Z'$ be the union of those components of $X$ different from $Z$ and let $I$ and $I'$ denote the corresponding ideal sheaves. Since $X$ is reduced, $I$ and $I'$ are radical. Then $I\cap I'=0$ by their definition, and so $\Ann_{\CO_X}(I')=I$. \autoref{t.i*ibEquiv} shows that $M[I']$ is a Cartier subsheaf of $M$. Using $\Ann_{\CO_X}(M)=0$, we compute $\Ann_{\CO_X}(M/M[I'])=\{f\in\CO_X\mid fM\subset M[I']\}=\{f\in\CO_X\mid fI'M=0\}= \{f\in\CO_X\mid fI'=0\}=\Ann_{\CO_X}(I')=I$. Since $I$ is the ideal sheaf of $Z$, the proof is complete.

\end{proof}

\begin{prop}
\label{t.finitelymanyirred}
Let $X$ be $\Fr$-finite and $M$ a coherent Cartier module. Then the collection
\[
    \{Y = \suppcrys M/N \,|\, N\text{ a Cartier submodule of }M \}
\]
is a finite collection of reduced subschemes of $X$. In fact, it consists of the finite unions of the finitely many irreducible subschemes in the collection.
\end{prop}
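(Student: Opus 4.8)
The plan is to reduce the statement to counting the \emph{irreducible} members of the collection and then to run an induction on $\dim X$ whose inductive step is powered by the uniform support bound \autoref{t.UniformSupport}. Write $\CS=\{\suppcrys(M/N)\mid N\subseteq M \text{ a Cartier submodule}\}$. By the preceding lemma the elements of $\CS$ are reduced, $\CS$ is closed under finite unions, and $\CS$ is closed under passing to irreducible components; moreover every element of $\CS$ is the finite union of its irreducible components, each of which again lies in $\CS$. Hence $\CS$ is exactly the set of all finite unions of its irreducible members, and it suffices to prove that the set of irreducible members of $\CS$ is finite. The second assertion of the proposition then follows immediately.

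Before inducting I would normalize $M$. Replacing $M$ by $\underline M$ (\autoref{t.QuotSurjStruct}) does not change $\CS$: for $N\subseteq M$ the natural map $\underline M/(N\cap\underline M)\to M/N$ is injective with cokernel a quotient of the nilpotent $M/\underline M$, hence a nil-isomorphism, so $\suppcrys(M/N)=\suppcrys(\underline M/(N\cap\underline M))$ by \autoref{l.nilSuppWellDef}; conversely every Cartier submodule of $\underline M$ is one of $M$, and the same nil-isomorphism argument identifies its crystalline support with one coming from $M$. Thus I may assume the structural map $\Ca$ of $M$ is surjective. The key consequence is that surjectivity of $\Ca$ is inherited by every quotient (from $\Ca(M)=M$ one gets $\Ca(M/N)=M/N$), so by \autoref{l.nilSuppWellDef} one has $\suppcrys(M/N)=\supp(M/N)$ for \emph{all} Cartier submodules $N$. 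This is precisely the form in which \autoref{t.UniformSupport} applies.

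I would now induct on $d=\dim X$, the case $d=0$ being clear since a Noetherian scheme of dimension zero has only finitely many closed subsets. For the inductive step fix the closed set $Y=V(I)$ provided by \autoref{t.UniformSupport}; it contains no irreducible component of $X$, so each of its irreducible components is properly contained in a component of $X$ and therefore $\dim Y<\dim X$, and crucially $I$ depends only on $M$, not on any $N$. Let $Z$ be an irreducible member of $\CS$. By the first paragraph (closure under components gives, for each such $Z$, a submodule $N$ with $\suppcrys(M/N)=Z$) I may write $Z=\suppcrys(M/N)=\supp(M/N)$ with this support irreducible and equal to $Z$. Since an irreducible closed subset of $X$ contains an irreducible component of $X$ only if it \emph{is} such a component, there are two cases: either $Z$ is an irreducible component of $X$, of which there are only finitely many; or $Z=\supp(M/N)$ contains no component of $X$, in which case \autoref{t.UniformSupport} yields $IM\subseteq N$ and $Z\subseteq Y$.

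In the second case set $M'=\sum_{i\ge 0}\Ca^i(IM)$, the smallest Cartier submodule of $M$ containing $IM$; it depends only on $M$, and $IM\subseteq M'$ shows that the coherent Cartier module $M/M'$ is annihilated by $I$, hence is naturally a coherent Cartier module on $Y$ (\autoref{t.ibDefinition}, \autoref{t.Kashiwara}). As $N$ is a Cartier submodule containing $IM$ we have $M'\subseteq N$, so $(M/M')/(N/M')\cong M/N$ and therefore $Z=\suppcrys(M/N)$ is an irreducible member of the analogous collection attached to $M/M'$ on $Y$. By the induction hypothesis applied to the $\Fr$-finite scheme $Y$ of strictly smaller dimension, that collection has only finitely many irreducible members. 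Combining the two cases shows that $\CS$ has only finitely many irreducible members, which completes the proof. The only genuine input is the uniformity built into \autoref{t.UniformSupport}---that the bounding set $Y$, and hence $M'$, does not depend on $N$---and the main point to get right is the reduction to surjective $\Ca$, after which the honest support and the crystalline support of every quotient coincide, so that this uniform bound is available unchanged at each stage of the induction.
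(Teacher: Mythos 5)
Your proof is correct, but it takes a different route from the paper's at the decisive step. After the common reduction (via the preceding lemma) to counting the \emph{irreducible} members of $\CS$ and the normalization $M=\underline{M}$, the paper finishes in three lines: if $x$ is the generic point of an irreducible component of $\supp(M/N)$, then $(M/N)_x$ is a non-zero finite-length Cartier quotient of $M_x$ with surjective structural map, hence non-nilpotent, so $x$ must lie in the finite set $S$ of \autoref{t.GabberFinite}; finiteness of the irreducible members is immediate. You instead bypass \autoref{t.GabberFinite} entirely and run a fresh induction on $\dim X$ powered directly by \autoref{t.UniformSupport}: an irreducible member is either a component of $X$ (finitely many) or is trapped in the uniform closed set $Y=V(I)$, in which case $IM\subseteq N$ forces $M'=\sum_{i}\Ca^i(IM)\subseteq N$, so the member arises from the single Cartier module $M/M'$ on the lower-dimensional scheme $Y$. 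This is in effect an inlined version of the induction the paper has already packaged into \autoref{t.GabberFinite}, whose own proof performs exactly the same descent to $M/\sum_i\Ca^i(IM)$ on $Y$; the paper's argument is shorter because it reuses that proposition, while yours is self-contained modulo \autoref{t.UniformSupport} and makes the essential point — that $Y$ and hence $M'$ are independent of $N$ — fully explicit. Both arguments tacitly use that $X$ is Noetherian of finite dimension so that $\dim Y<\dim X$, a convention the paper also adopts in the proofs of \autoref{t.GabberFinite} and \autoref{t.FiniteLenth}.
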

\begin{proof}
By the preceding Lemma it is enough to show that there are only finitely many irreducible subschemes in this collection. As before, we may replace $M$ by $\underline{M}$ so that the structural map is surjective. If $V=\overline{x}$ for $x \in X$ is an irreducible component of $\supp(M/N)$ then $(M/N)_x$ is a non-zero finite length quotient of $M_x$ (over $\CO_{X,x}$). Since the structural map of $M_x$, and hence of $(M/N)_x$, is surjective, $(M/N)_x$ cannot be nilpotent. It follows that $x$ is an element of the finite set $S$ of \autoref{t.GabberFinite}.
\end{proof}

\begin{rem}
\label{r.EnHoFiniteness}
Enescu and Hochster show in \cite[Theorem 3.6]{EnescuHochster.FrobLocCohom} (see also \cite[Corollary 3.11]{Sharp.GradedAnnihil}) that if $R$ is \emph{local} and $W$ is an Artinian left $R[\Fr]$-module, then the collection
\[
    \{ \Ann_R V | V \subseteq W \text{ an $R[\Fr]$-submodule}\}
\]
is a finite set of radical ideals, consisting of all intersections of the finitely many primes in it. We will explain in \autoref{s.applications} that the precise connection to our result is via Matlis duality. In fact, in the local case, one may give an alternative proof of \autoref{t.finitelymanyirred} using \cite[Theorem 3.1]{EnescuHochster.FrobLocCohom}.
\end{rem}
\begin{rem}
Note that the collection in \autoref{t.finitelymanyirred} is not closed under scheme-theoretic intersection in general.  See, however, the following remark.
\end{rem}
\begin{rem}[Compatibly split subvarieties]
\label{r.CompatiblySplit}
From the viewpoint of Frobenius splittings (see \cite{BrionKumar.FrobSplit}) \autoref{t.finitelymanyirred} may also be interpreted as a generalization of the finiteness of compatibly split subvarieties of an $\Fr$-split variety $X$ obtained by Schwede \cite{Schwede.Fadjunction} (see also \cite{KumarMehta.CompatiblySplit} for a very short proof): A scheme $X$ is called $\Fr$-split if there is a map $\Ca: \Fr^e_*\CO_X \to \CO_X$ splitting the Frobenius. Due to the splitting property, the coherent Cartier module $(\CO_X,\Ca)$ has surjective structural map $\Ca$. A subvariety $Y$ cut out by a sheaf of ideals $I$ is called compatibly split, if $\Ca(I) \subseteq I$, \ie~if and only if the defining ideal $I$ of $Y$  is a Cartier submodule of $\CO_X$, or equivalently, the quotient $\CO_Y$ is a Cartier module quotient of $\CO_X$. Since $\CO_Y$ is a Cartier module quotient, it has also surjective structural map. In particular, by \autoref{t.AnnRadical}, $Y$ is reduced. Since $Y = \supp_{\CO_X} \CO_Y$ the finiteness of compatibly split subschemes $Y$, follows from \autoref{t.finitelymanyirred}. Note that in this case, the collection is, in fact, closed under scheme-theoretic intersection, since $\Ann_{\CO_X} (\CO_X/(I+J))=I+J$.
\end{rem}

\subsection{Finiteness of homomorphisms of Cartier crystals}
\label{s.SimpleCartDecompSeries}
We show that the $\Hom$-sets in the category of Cartier crystals are finite sets. An analog of this result for Lyubeznik's $\mathcal{F}$-finite modules has been obtained by Hochster \cite{Hochster.FinitenessFmod}.
\begin{prop}
\label{t.MinimaSimple}
A non-zero coherent Cartier module is simple if either
\begin{enumerate}
\item $M$ is minimal and simple, or
\item $M$ has zero structural map and is simple as an $\CO_X$-module, \ie~isomorphic to $\CO_X/\frm$ for some maximal ideal $\frm$.
\end{enumerate}
\end{prop}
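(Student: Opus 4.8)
The plan is to read the statement as a classification of the simple objects: a nonzero simple coherent Cartier module is of \emph{exactly} one of the two listed types, and conversely each type is simple. First I would dispose of the converse, which needs nothing: type (1) asserts simplicity outright, and in type (2) the structural map vanishes, so the Cartier submodules of $M$ coincide with its $\CO_X$-submodules and simplicity as a Cartier module is the same as $\CO_X$-simplicity. Thus all the content lies in the forward direction, and the key idea is to split on the behaviour of the image $\Ca(M)$, which is always a Cartier submodule of $M$ (this is the computation $r\Ca(m)=\Ca(r^qm)$ together with $\Ca$-stability, as already used in the proof of \autoref{t.ImagesStabilize}). Simplicity then forces $\Ca(M)=M$ or $\Ca(M)=0$, and I expect these two alternatives to produce precisely cases (1) and (2).

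If $\Ca(M)=M$, the structural map is surjective, which is condition \autoref{d.minimal.quot} of \autoref{d.minimal}, so it remains only to check condition \autoref{d.minimal.sub}. For this I would invoke the maximal nilpotent submodule $M_{\nil}$ of \autoref{t.nil-part}: it is a Cartier submodule, so by simplicity $M_{\nil}=0$ or $M_{\nil}=M$. It cannot be all of $M$, since a nilpotent module with surjective structural map satisfies $M=\Ca^e(M)=0$ while $M\neq0$; hence $M_{\nil}=0$, $M$ is minimal, and we land in case (1). If instead $\Ca(M)=0$, then $M$ carries the zero structural map, its Cartier submodules are exactly its $\CO_X$-submodules, and simplicity as a Cartier module becomes simplicity as a coherent $\CO_X$-module; such a module is cyclic and therefore isomorphic to $\CO_X/\frm$ for a maximal ideal $\frm$, landing in case (2). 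I would also note that the two alternatives are mutually exclusive for $M\neq0$, since a module with zero structural map is nilpotent and hence never minimal.

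I do not anticipate any genuine obstacle here: the argument is a short case analysis resting on the elementary facts that $\Ca(M)$ and $M_{\nil}$ are Cartier submodules and that surjectivity of $\Ca$ together with nilpotence forces $M=0$. The only step that demands a little care is the final identification in the second alternative, namely that a simple coherent $\CO_X$-module on a locally Noetherian scheme is $\CO_X/\frm$ for a maximal ideal; this is standard commutative algebra (reduce to the stalk at the unique closed point of the support, where a simple module is the residue field), so I would only indicate it rather than spell it out.
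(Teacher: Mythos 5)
Your argument is correct and follows essentially the same route as the paper's proof: a short case analysis using that $\Ca(M)$ (respectively $M_{\nil}$, respectively $M_{min}$) is a Cartier sub(quotient)module, so simplicity forces it to be $0$ or all of $M$. The paper organizes the dichotomy as ``$M_{min}\neq 0$ versus $M_{min}=0$'' while you split on ``$\Ca(M)=M$ versus $\Ca(M)=0$'', but the ingredients and conclusions are identical.
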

\begin{proof}
Let $M$ be simple. If $M_{min}$ is non-zero, then $M \cong M_{min}$ since $M_{min}$ is a sub-quotient of $M$. Otherwise $M_{min}=0$, hence $M$ is nilpotent which is the same as $\Ca^n(M)=0$ for some $n$. This implies that $\Ca$ cannot be surjective, implying that $\Ca(M)$ is a \emph{proper} Cartier submodule of $M$. Hence $\Ca(M)=0$ by simplicity of $M$. But if $\Ca$ acts as zero on $M$, then $M$ is simple as a Cartier module if and only if $M$ is simple as an $\CO_X$-module.
\end{proof}

\begin{prop}
A simple Cartier module $M$ has a unique associated prime.
\end{prop}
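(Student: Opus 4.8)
The plan is to extract from every associated point of $M$ a nonzero Cartier submodule and then pit this against simplicity. Write $\operatorname{Ass}(M)$ for the finite, nonempty set of associated points of the coherent sheaf underlying $M$; recall that $\operatorname{Ass}(M)\subseteq\supp(M)$ and that a point $x\in X$ lies in $\operatorname{Ass}(M)$ precisely when $M_x[\frm_x]=\{m\in M_x\mid\frm_x m=0\}$ is nonzero, \ie~when $\frm_x$ is an associated prime of the $\CO_{X,x}$-module $M_x$. The goal is to show that this set consists of a single point.

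First I would fix an associated point $x\in\operatorname{Ass}(M)$ and let $W=\overline{\{x\}}$ carry its reduced (hence irreducible) structure, with ideal sheaf $I=I_W$; by construction the stalk of $I$ at $x$ is the maximal ideal $\frm_x$. The crucial input is that $M[I]=\{m\in M\mid Im=0\}$ is a Cartier submodule of $M$. This is exactly \autoref{t.ibDefinition}: the only thing used is the inclusion $I^{[q]}\subseteq I$, which gives $I\,\Ca(m)=\Ca(I^{[q]}m)\subseteq\Ca(Im)=0$ whenever $Im=0$, so that $\Ca(M[I])\subseteq M[I]$. Note that this applies verbatim to the closed subscheme $W$ irrespective of whether $x$ is a minimal or an embedded point of $\supp(M)$, so that no appeal to \autoref{t.MinimaSimple} or to surjectivity of $\Ca$ is needed.

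Next I would verify that $M[I]$ is nonzero: its stalk at $x$ is $M_x[\frm_x]$, which is nonzero exactly because $x$ was chosen to be an associated point. Since $M$ is simple and $M[I]$ is a nonzero Cartier submodule, simplicity forces $M[I]=M$, that is $IM=0$. Hence $I\subseteq\Ann_{\CO_X}M$ and therefore $\supp(M)=V(\Ann_{\CO_X}M)\subseteq V(I)=\overline{\{x\}}$. Thus, remarkably, \emph{every} associated point $x$ satisfies $\supp(M)\subseteq\overline{\{x\}}$.

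Finally I would compare two associated points $x_1,x_2\in\operatorname{Ass}(M)$. Both lie in $\supp(M)$, while the previous step yields $\supp(M)\subseteq\overline{\{x_1\}}$ and $\supp(M)\subseteq\overline{\{x_2\}}$; consequently $x_1\in\overline{\{x_2\}}$ and $x_2\in\overline{\{x_1\}}$. Mutual specialization in a scheme forces $\overline{\{x_1\}}=\overline{\{x_2\}}$, and since each is the unique generic point of this irreducible closed set, $x_1=x_2$. Hence $M$ has exactly one associated point, the generic point of $\supp(M)$; equivalently, $M$ has a unique associated prime, namely $\sqrt{\Ann_{\CO_X}M}$ (which equals $\Ann_{\CO_X}M$ itself in the minimal case, by \autoref{t.AnnRadical}). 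The argument is short, and the only genuinely delicate steps are the two local bookkeeping points: confirming (via \autoref{t.ibDefinition}) that $M[I_W]$ remains a Cartier submodule even when $x$ is embedded, and translating the sheaf-theoretic notion of associated point into the nonvanishing of $M_x[\frm_x]$, which is precisely what makes $M[I_W]\neq 0$.
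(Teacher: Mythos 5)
Your proof is correct and is essentially the paper's own argument: the paper also observes that for an associated prime $\frp$ the subsheaf $M[\frp]$ is a nonzero Cartier submodule (by the same computation $I\,\Ca(m)=\Ca(I^{[q]}m)=0$ from \autoref{t.ibDefinition}), so simplicity forces $M[\frp]=M$, whence $\frp$ kills $M$ and any two associated primes contain each other. Your version merely spells out the global bookkeeping (stalks, mutual specialization) that the paper leaves implicit.
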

\begin{proof}
    Let $\frp$ be an associated prime of $M$. Then $M[\frp] \subseteq M$ is clearly a non-zero Cartier submodule. Since $M$ is simple we must have $M[\frp]=M$ which shows our assertion.
\end{proof}

\begin{prop}
\label{t.simpleFiniteHom}
Suppose $X$ is $\Fr$-finite and $M$ is a simple coherent Cartier sheaf on $X$.
\begin{enumerate}
\item If $M$ is minimal and simple, then $\End_{\Cart}(M)$ is a finite field containing $\Primefield_q$. 
\item Otherwise $\Ann_{\CO_X}(M)=:\frm$ is a maximal ideal sheaf, $C_M=0$ and $\End_{\Cart}(M)$ is isomorphic to the residue field at~$\frm$.
\end{enumerate}
\end{prop}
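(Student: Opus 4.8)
The plan is to invoke the structural dichotomy of \autoref{t.MinimaSimple}, which splits the argument into exactly the two cases of the statement. The second (``otherwise'') case is immediate: there $C_M=0$ and $M\cong\CO_X/\frm$ as an $\CO_X$-module for a maximal ideal sheaf $\frm$, so $\Ann_{\CO_X}(M)=\frm$ is maximal as claimed. Since the structural maps on source and target both vanish, every $\CO_X$-linear endomorphism of $M$ is automatically a morphism of Cartier modules, whence $\End_{\Cart}(M)=\End_{\CO_X}(\CO_X/\frm)$, which is canonically the residue field at $\frm$.

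The remaining work is the first case, where $M$ is minimal and simple. First I would record that $\End_{\Cart}(M)$ is a division ring by Schur's lemma: for nonzero $\phi$ the kernel and image are Cartier submodules of the simple module $M$, forcing $\phi$ to be an isomorphism with Cartier-linear inverse. Next, $\Primefield_q$ embeds centrally, since multiplication by $\lambda\in\Primefield_q$ is Cartier linear because $\lambda^q=\lambda$ gives $C(\lambda m)=C(\lambda^q m)=\lambda C(m)$, and it commutes with every $\CO_X$-linear endomorphism. Thus $\End_{\Cart}(M)$ is a division ring containing $\Primefield_q$ in its center, and it only remains to prove finiteness, after which Wedderburn's little theorem upgrades ``finite division ring'' to ``finite field''.

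For finiteness I would reduce to the already-established theory of Cartier modules over a field. Since $M$ is simple, the preceding proposition supplies an associated prime $\frp$ with $M[\frp]=M$, i.e. $\frp M=0$; as $\Ann_{\CO_X}M\subseteq\frp$ always holds, we get $\Ann_{\CO_X}M=\frp$, so $\supp M=V(\frp)$ is integral. Let $\eta$ be its generic point and $K=\CO_{X,\eta}/\frp\CO_{X,\eta}$ the residue field (the function field of $V(\frp)$), which is $\Fr$-finite because $X$ is. Localizing at $\eta$ is exact and Cartier linear (\autoref{t.CartierLocalizes}) and preserves minimality (\autoref{t.MinimalLocalizes}), and since $\frp M_\eta=0$ the localization descends to a finite-dimensional minimal Cartier module $M_\eta=\underline{M_\eta}$ over $K$. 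Localization therefore induces a ring homomorphism $\End_{\Cart}(M)\to\End_{\Cart(K)}(M_\eta)$; it is injective because its source is a division ring and the identity maps to the nonzero $\id_{M_\eta}$ (note $M_\eta\neq0$ as $\eta\in\supp M$). By \autoref{t.CartierFieldBasics} the target is a finite-dimensional $\Primefield_q$-vector space, hence finite, so $\End_{\Cart}(M)$ is finite as well, and Wedderburn finishes the argument.

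The main obstacle is this finiteness in the minimal case, and the crux is setting up the localization at the generic point correctly: one must check that $\Ann_{\CO_X}M$ really is prime (so a generic point and residue field are available), that $M_\eta$ descends to a genuine minimal Cartier module over $K$ (using $\frp M=0$ together with $(r-r')^q\in\frp$ in characteristic $p$ to see the structural map is well defined modulo $\frp$), and that $K$ inherits $\Fr$-finiteness. Once these are in place, the heavy lifting is done entirely by the field case \autoref{t.CartierFieldBasics}, and the division-ring structure plus Wedderburn's theorem close both the commutativity and the ``field'' conclusions.
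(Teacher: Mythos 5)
Your proposal is correct and follows essentially the same route as the paper: case (b) is immediate from \autoref{t.MinimaSimple}, and for case (a) one localizes at the unique associated prime (which equals the annihilator), reduces to a minimal Cartier module over the $\Fr$-finite residue field, invokes \autoref{t.CartierFieldBasics} for finiteness of the endomorphism ring, and combines Schur's lemma with Wedderburn's theorem. The only cosmetic difference is how injectivity of $\End_{\Cart}(M)\to\End_{\Cart}(M_\frp)$ is justified (the paper uses $M\subseteq M_\frp$; you use that the source is a division ring), and both work.
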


\begin{proof}
In view of \autoref{t.MinimaSimple}, the proof of (b) is obvious. To prove (a), let $\frp$ denote the unique associated prime of $M$. Since $M\subset M_\frp$ and $\End_{\Cart}(M)\subset\End_{\Cart}(M_\frp)$, we may localize at $\frp$ and thus assume that $X=\Spec R$ for $R$ a local ring with maximal ideal $\frp=\Ann_R(M)$ (the condition $C(M)=M$ persists under localization). Therefore $M$ may be regarded as a finite dimensional vector space over the residue field $k:=R/\frp$, and so we are reduced to $R=k$. Since $X$ is $F$-finite, the same holds for $k$. From \autoref{t.CartierFieldBasics} we deduce that $\End_{\Cart}(M)$ is a finite dimensional vector space over $\Primefield_q$ and in particular it is finite. As we also assume that $M$ is simple, the lemma of Schur implies that $\End_{\Cart}(M)$ is a skew field. Now (a) follows since every finite skew field is a field.
\end{proof}
\begin{cor}\label{t.finiteEndCrys}
If $\CM$ is a simple (non-zero) Cartier crystal on $X$, then $\End_{\Crys}(\CM)$ is a finite field containing~$\Primefield_q$.
\end{cor}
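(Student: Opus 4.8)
The plan is to reduce the statement about crystals to the already-established statement about minimal Cartier modules in \autoref{t.simpleFiniteHom}(a). First I would pass to the minimal representative: by \autoref{t.ExistenceMinimal} the crystal $\CM$ is represented by a unique minimal Cartier module $M=M_{min}$, and \autoref{t.HomCrysMinEqual} identifies $\End_{\Crys}(\CM)$ with $\End_{\Cart}(M)$ as rings. Since $\CM\neq 0$ we have $M\neq 0$ (if $M$ were zero, so would be $\CM$). Thus it suffices to show that $M$ is a \emph{simple} Cartier module and then invoke \autoref{t.simpleFiniteHom}(a).

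The key step is to transfer simplicity from the crystal to the minimal module. Let $N\subseteq M$ be a Cartier submodule. The inclusion $N\into M$ has zero kernel, and since localization at nil-isomorphisms is exact it induces a monomorphism $\CN\to\CM$ of crystals; by simplicity of $\CM$ either $\CN=0$ or $\CN\to\CM$ is an isomorphism. In the first case $N$ is nilpotent by \autoref{t.CrysNilp}, and since $M$ is minimal it has no nilpotent submodules (\autoref{d.minimal}\autoref{d.minimal.sub}), forcing $N=0$. In the second case the inclusion is a nil-isomorphism (again \autoref{t.CrysNilp}), so its cokernel $M/N$ is nilpotent; minimality of $M$, which precludes nilpotent quotients (\autoref{d.minimal}\autoref{d.minimal.quot}), then gives $M/N=0$, i.e.~$N=M$. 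Hence $M$ has only the trivial Cartier submodules and is simple.

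With $M$ now a minimal and simple Cartier module, \autoref{t.simpleFiniteHom}(a) applies directly and shows that $\End_{\Cart}(M)$ is a finite field containing $\Primefield_q$; combined with the identification $\End_{\Crys}(\CM)\cong\End_{\Cart}(M)$ from \autoref{t.HomCrysMinEqual}, this yields the claim. The only delicate point is the simplicity transfer in the middle paragraph, where minimality is used twice (once on submodules, once on quotients) to kill the two nilpotent alternatives coming from simplicity of $\CM$; everything else is a direct appeal to the structural results already in place. I note in particular that since $M$ is minimal its structural map $\Ca$ is surjective, so $M$ cannot be of the degenerate type (b) of \autoref{t.MinimaSimple} (which has $\Ca=0$), confirming that we are genuinely in the situation covered by \autoref{t.simpleFiniteHom}(a) rather than that of item (b).
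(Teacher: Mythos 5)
Your proof is correct and fills in exactly the argument the paper leaves implicit: pass to the minimal representative via \autoref{t.ExistenceMinimal}, identify $\End_{\Crys}(\CM)$ with $\End_{\Cart}(M_{min})$ via \autoref{t.HomCrysMinEqual}, transfer simplicity from the crystal to the minimal module using \autoref{t.CrysNilp} and both halves of minimality, and invoke \autoref{t.simpleFiniteHom}(a). This is the same route the paper intends, and the simplicity-transfer step you spell out is handled correctly.
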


Combining \autoref{t.simpleFiniteHom} with the finite length of a Cartier crystal proved in \autoref{t.CrysFiniteLength} we obtain the following analog of the main result in \cite{Hochster.FinitenessFmod} -- and in fact, we follow his arguments closely.

\begin{thm}
\label{t.finiteHom}
Let $X$ be an $\Fr$-finite scheme, and suppose that $M$ and $N$ are coherent Cartier sheaves with $C(M)=M$ and $N_{\nil}=0$. Then $\Hom_{\Cart}(M,N)$ is finite dimensional over $\Primefield_q$, hence a finite set.
\end{thm}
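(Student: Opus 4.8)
The plan is to reduce the statement to the finiteness of $\Hom$ in the category of Cartier crystals, and then to establish the latter by d\'evissage, using the finite length of crystals together with Schur's lemma.

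First I would observe that the hypothesis $C(M)=M$ forces the natural map
\[
    \Hom_{\Cart}(M,N) \to \Hom_{\Crys}(\CM,\CN)
\]
to be injective. Indeed, a morphism $\phi\colon M\to N$ of Cartier modules becomes the zero morphism of crystals precisely when its image $\phi(M)$ is nilpotent (a morphism in a Serre quotient vanishes exactly when its image lies in the Serre subcategory, here the nilpotent modules). But since $C(M)=M$ we have $C(\phi(M))=\phi(C(M))=\phi(M)$, so $\phi(M)$ has surjective structural map; by the proof of \autoref{l.nilSuppWellDef}, such a module is nilpotent only if it is zero. Hence $\phi(M)$ nilpotent forces $\phi=0$, giving the asserted injectivity. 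As the map is plainly $\Primefield_q$-linear, it suffices to show that $\Hom_{\Crys}(\CM,\CN)$ is a finite-dimensional $\Primefield_q$-vector space. (The hypothesis $N_{\nil}=0$ is not strictly needed for this injection, but it is what makes $N$ and $\underline{N}$ interchangeable and explains why the $\Cart$ and $\Crys$ versions of the statement coincide.)

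Second, I would run an induction on $\length(\CM)+\length(\CN)$, both lengths being finite by \autoref{t.CrysFiniteLength}. The whole argument is controlled by the left exactness of $\Hom_{\Crys}$ in each variable: a short exact sequence of crystals $0\to\CM'\to\CM\to\CM''\to0$ yields an exact sequence
\[
    0\to \Hom_{\Crys}(\CM'',\CN)\to \Hom_{\Crys}(\CM,\CN)\to \Hom_{\Crys}(\CM',\CN),
\]
whence $\dim_{\Primefield_q}\Hom_{\Crys}(\CM,\CN)\le \dim_{\Primefield_q}\Hom_{\Crys}(\CM',\CN)+\dim_{\Primefield_q}\Hom_{\Crys}(\CM'',\CN)$, and symmetrically in the second variable. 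Whenever a length exceeds one I would split off a simple subobject (or pass to a simple quotient), so that each of the two summands has strictly smaller total length; the induction then reduces everything to the case where both $\CM$ and $\CN$ are simple. For simple crystals Schur's lemma applies: $\Hom_{\Crys}(\CM,\CN)=0$ unless $\CM\cong\CN$, in which case it equals $\End_{\Crys}(\CM)$, which by \autoref{t.finiteEndCrys} is a finite field containing $\Primefield_q$ and hence finite-dimensional over $\Primefield_q$.

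I expect the genuinely substantial input to lie entirely in the results already established, namely the finite length of Cartier crystals (\autoref{t.CrysFiniteLength}) and the finiteness of the endomorphism field of a simple crystal (\autoref{t.finiteEndCrys}, which itself rests on the field case \autoref{t.CartierFieldBasics}). Within the present proof the only step requiring genuine care is the first reduction, where the hypothesis $C(M)=M$ enters through the fact that a Cartier module with surjective structural map cannot be nilpotent unless it is zero; the d\'evissage and the final appeal to Schur's lemma are then purely formal.
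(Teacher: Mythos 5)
Your proof is correct and follows essentially the same route as the paper's: reduce to $\Hom_{\Crys}(\CM,\CN)$, then d\'evissage on $\length(\CM)+\length(\CN)$ using left exactness of $\Hom$ in each variable, with Schur's lemma and \autoref{t.finiteEndCrys} handling the case of two simples. The only (minor) divergence is the first step: the paper passes to minimal models and invokes \autoref{t.HomCrysMinEqual} to get an \emph{equality} $\Hom_{\Cart}(M,N)=\Hom_{\Crys}(\CM,\CN)$, using both hypotheses, whereas you establish only an injection using $C(M)=M$ alone --- which suffices for finiteness and, as you observe, shows the hypothesis $N_{\nil}=0$ is not actually needed for the conclusion.
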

\begin{proof}
The condition on $M$ ensures that its image under any homomorphsm to $N$ lies in $\underline{N}$, the condition on $N$ that $M_{\nil}$ is in the kernel of any such homomorphism. Hence we may replace $M$ by $\overline{M}=M^{\min}$ and $N$ by $\underline{N}=N^{\min}$, so that $\Hom_{\Cart}(M,N) = \Hom_{\Crys}(\CM,\CN)$ if $\CM$ and $\CN$ denote the associated crystals by \autoref{t.HomCrysMinEqual}. Since by \autoref{t.CrysFiniteLength}, both $\CM$ and $\CN$ have finite length (equal to the quasi-length of $M,N$), we can now proceed by induction on $\length(\CM)+\length(\CN)$, the base case of $\CM$ and $\CN$ being simple was just treated in \autoref{t.simpleFiniteHom}. Now take any $0 \neq \CN' \varsubsetneq \CN$ and consider the long exact sequence for $\Hom_{\Crys}$:
\[
    0 \to \Hom_{\Crys}(\CM,\CN') \to \Hom_{\Crys}(\CM,\CN) \to \Hom_{\Crys}(\CM,\CN/\CN') \to \ldots
\]
By induction hypothesis, both ends are finite, so the middle term is finite as well. Proceeding similarly for $\CM$, the result follows.
\end{proof}
\begin{cor}
\label{t.FiniteSubcrystal}
Let $X$ be an $\Fr$-finite scheme. Then any Cartier crystal $\CM$ on $X$ contains only finitely many subcrystals.
\end{cor}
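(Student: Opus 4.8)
The plan is to deduce this from the two structural facts already established: every coherent Cartier crystal has finite length (\autoref{t.CrysFiniteLength}), and $\Hom_{\Crys}(\CM,\CN)$ is a finite set for any two coherent Cartier crystals (\autoref{t.finiteHom}). These are exactly the hypotheses of the following purely formal principle, which is all that is really needed: in an abelian category in which a fixed object $\CM$ has finite length and $\Hom(\CA,\CB)$ is finite for all subquotients $\CA,\CB$ of $\CM$, the object $\CM$ has only finitely many subobjects. I would prove this principle by induction on $\length(\CM)$, the cases $\length(\CM)\le 1$ being immediate since then $\CM$ has at most the two subcrystals $0$ and $\CM$.

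For the inductive step I would fix a maximal subcrystal $\CM'\subsetneq\CM$, so that $\CS\defeq\CM/\CM'$ is simple and $\length(\CM')=\length(\CM)-1$. Given any subcrystal $\CN\subseteq\CM$, the image of the composite $\CN\hookrightarrow\CM\twoheadrightarrow\CS$ is either $0$ or all of $\CS$, by simplicity of $\CS$. In the first case $\CN\subseteq\CM'$, and there are only finitely many such $\CN$ by the induction hypothesis applied to $\CM'$. So it remains to bound the number of $\CN$ falling into the second case, where $\CN+\CM'=\CM$.

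This second case is the crux. Here $\CL\defeq\CN\cap\CM'$ is a subcrystal of $\CM'$, of which there are finitely many by induction, so it suffices to bound, for each fixed $\CL$, the number of $\CN$ with $\CN\cap\CM'=\CL$ and $\CN+\CM'=\CM$. Passing to $\CM/\CL$, such quotients $\CN/\CL$ are precisely the complements of $\CM'/\CL$ in $\CM/\CL$, equivalently the splittings of the short exact sequence $0\to\CM'/\CL\to\CM/\CL\to\CS\to0$. The set of these splittings, if non-empty, is a torsor under $\Hom_{\Crys}(\CS,\CM'/\CL)$ (two splittings differ by a map $\CS\to\CM'/\CL$), which is finite by \autoref{t.finiteHom}; hence there are finitely many $\CN$ for each $\CL$, and the induction closes.

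I expect the main obstacle to be bookkeeping rather than conceptual. One must make sure the elementary complement/torsor counting is legitimately carried out inside $\Crys(X)$, but since $\Crys(X)$ is a genuine abelian category (a Serre quotient of $\CohCart(X)$), all the exact-sequence manipulations and the identification of complements with splittings are valid verbatim; the only inputs specific to Cartier crystals are the finiteness of length and of $\Hom$-sets, both already in hand. I would also remark that running the same argument on minimal representatives yields at once the companion statement that a coherent Cartier module has, up to nilpotence, only finitely many Cartier submodules.
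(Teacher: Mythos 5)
Your argument is correct, and it takes a genuinely different route from the paper's. The paper argues by minimal counterexample ``from below'': a counterexample $\CM$ of smallest length would contain only finitely many simple subcrystals (Jordan--H\"older gives finitely many isomorphism classes, and \autoref{t.finiteEndCrys}, the finiteness of $\End_{\Crys}$ of simple crystals, bounds the number of simple subobjects within each class via the socle), so some simple $\CS \subseteq \CM$ would lie in infinitely many subcrystals, making $\CM/\CS$ a shorter counterexample. You instead induct ``from above'' on a maximal subcrystal $\CM'$ with simple quotient $\CS$, splitting the subcrystals $\CN$ into those contained in $\CM'$ and those with $\CN+\CM'=\CM$, and counting the latter, for each of the finitely many possible intersections $\CL=\CN\cap\CM'$, as the set of complements of $\CM'/\CL$ in $\CM/\CL$, i.e.\ a torsor under the finite group $\Hom_{\Crys}(\CS,\CM'/\CL)$. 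Your version consumes the stronger input \autoref{t.finiteHom} (finiteness of all Hom-sets) where the paper gets by with finiteness of endomorphism rings of simples only, but \autoref{t.finiteHom} is proved immediately before the corollary, so there is no circularity; in exchange, your torsor count is arguably more transparent and yields an explicit bound on the number of subcrystals in terms of lengths and Hom-set cardinalities. All the manipulations (modular law, complements versus splittings, the torsor structure on sections) are valid verbatim in the abelian category $\Crys(X)$, as you note, so the argument closes; your concluding remark recovering the finiteness of Cartier submodules up to nilpotence is exactly how the paper proceeds afterwards, via \autoref{t.subCartSh.subCartCrys}.
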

\begin{proof}
By \autoref{t.CrysFiniteLength} and \autoref{t.finiteEndCrys}, the category of Cartier crystals is an artinian and noetherian abelian category in which the endomorphism ring of any simple object is finite. For any such it is well-known that any object has up to isomorphism only finitely many subobjects. Lacking a suitable reference, we include a short proof:
Suppose there is a counterexample $M$. Since every object has finite length, we take $M$ to be the counterexample of smallest length. By the Jordan-H\"older theorem $M$ contains only finitely many simple subobjects. Since any subobject of $M$ contains a simple one, there must be a simple subobject $S$ of $M$ for which there are infintely many distinct subobjects of $M$ containing $S$. But then $M/S$ contains infinitely many distinct subobjects. Since the length of $M/S$ is smaller than that of $M$, we have reached a contradiction.
\end{proof}
The following result allows one to obtain an analogous result for Cartier modules:
\begin{prop}\label{t.subCartSh.subCartCrys}
Let $X$ be an $\Fr$-finite scheme and $M$ a coherent Cartier module on $X$ with associated crystal $\CM$. Then the assignment $N\mapsto\CN$ from the submodules $N$ of $M$ which satisfy $C(N)=N$ to the subcrystals of $\CM$, sending $N$ to its associated crystal, is a bijection. Moreover this assignment is inclusion preserving, \ie, if $N$ and $N'$ are submodules of $M$ with $C(N)=N$ and $C(N')=N'$ representing the subcrystals $\CN$ and $\CN'$ of $\CM$, respectively, then $N\subset N'$ if and only if $\CN$ is a subcrystal of~$\CN'$.
\end{prop}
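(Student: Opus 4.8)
The plan is to produce an inverse to the assignment $N\mapsto\CN$ and to verify both implications in the inclusion-preserving claim; injectivity then follows formally. Throughout I would use that the localization functor $\Cart(X)\to\Crys(X)$ is exact, being the quotient by the Serre subcategory of nilpotent modules (\autoref{t.NilSerreSub}): it carries monomorphisms to monomorphisms, commutes with the formation of images and cokernels, and sends a module to $0$ exactly when that module is nilpotent (\autoref{t.CrysNilp}). In particular, for any Cartier submodule $N\into M$ the inclusion induces a monomorphism $\CN\to\CM$, so the assignment genuinely lands in the subcrystals of $\CM$ and is well defined. Note also that for a module inclusion $N\subseteq N'$ exactness immediately yields a subcrystal inclusion $\CN\subseteq\CN'$ compatible with the embeddings into $\CM$; this is the easy half of the inclusion-preserving statement.

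The crux is the converse implication, which I expect to deliver both injectivity and the nontrivial half of inclusion-preservation at once: if $N,N'\subseteq M$ satisfy $C(N)=N$ and $C(N')=N'$ and $\CN$ is contained in $\CN'$ as subcrystals of $\CM$, then $N\subseteq N'$. To prove it I would consider the module map $N\into M\onto M/N'$. After localization it becomes $\CN\to\CM\to\CM/\CN'$, whose image lies in $\CN'=\Ker(\CM\to\CM/\CN')$, so the composite is zero. By exactness the crystal of $\Image(N\to M/N')=(N+N')/N'\cong N/(N\cap N')$ vanishes, i.e. $N/(N\cap N')$ is nilpotent. But this is a quotient of $N$, and since $C(N)=N$ the module $N$ admits no nonzero nilpotent quotient (\autoref{t.QuotSurjStruct}); hence $N/(N\cap N')=0$, that is $N\subseteq N'$. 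Interchanging $N$ and $N'$ turns the equality $\CN=\CN'$ into $N=N'$, giving injectivity, and combined with the easy direction above it finishes the inclusion-preserving part.

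For surjectivity I would start from an arbitrary subcrystal $\CN\into\CM$ and represent this monomorphism by a left fraction $Q\Longleftarrow P\to[f]M$, in which $Q$ represents $\CN$, the map $P\Longrightarrow Q$ is a nil-isomorphism, and $f$ is an honest map of Cartier modules. Since the given morphism is mono and the nil-isomorphism becomes invertible in $\Crys(X)$, the induced map $\mathcal{P}\to\CM$ is mono, so $\Ker f$ is nilpotent and the surjection $P\onto N_0\defeq\Image(f)\subseteq M$ is a nil-isomorphism; thus the crystal of $N_0$ is isomorphic to $\CN$ and the inclusion $N_0\into M$ already realizes the given subcrystal. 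As $N_0$ need not have surjective structural map, I would then pass to its stable image $N\defeq\underline{N_0}=C^e(N_0)$ for $e\gg 0$, which exists by \autoref{t.ImagesStabilize} and satisfies $C(N)=N$; the inclusion $N\into N_0$ is a nil-isomorphism (\autoref{t.QuotSurjStruct}), so $N\into M$ induces the same subcrystal $\CN$ of $\CM$. Hence $N$ is a preimage of $\CN$, completing the bijection. The one genuine obstacle is exactly this last passage, namely extracting from the abstract subcrystal an actual submodule with surjective structural map; the remaining verifications are bookkeeping with the exactness of localization and with the fact that $C(N)=N$ forbids nonzero nilpotent quotients.
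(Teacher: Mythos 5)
Your proof is correct and follows essentially the same route as the paper's: surjectivity by replacing a representing submodule with its stable image $\underline{N_0}=C^e(N_0)$, and injectivity together with the inclusion-preserving property from the fact that a submodule with $C(N)=N$ admits no nonzero nilpotent quotient. The only cosmetic differences are that you obtain the hard direction of inclusion-preservation directly by applying exactness of localization to $N\to M/N'$, whereas the paper first reduces to the case $N\subseteq N'$ via the sum $N+N'$, and that you spell out (via left fractions) why every subcrystal is represented by an honest submodule of $M$, a point the paper leaves implicit.
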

\begin{proof}
If $N\subset M$ is any Cartier submodule representing the subcrystal $\CN$ of $\CM$, then $\underline{N}\subset N$ will also represent $\CN$. Hence the assignment in the proposition is surjective. Suppose now that two Cartier submodules $N$ and $N'$ of $M$ with $C(N)=N$ and $C(N')=N'$ both represent $\CN$. Then $N+N'$ will also be a Cartier submodule of $M$ with $C(N+N')=N+N'$ which represents $\CN$. Thus we may assume that $N\subset N'$.
But then the injective homomorphisms $N\into N'$ is a nil-isomorphisms, and in particular $N'$ will have the nilpotent quotient $N'/N$. Since $C(N')=N'$ we deduce $N'/N=0$. Hence the assignment in the proposition is also injective. The argument employed in the proof of the injectivity, also easily proves that the assignment is inclusion preserving.
\end{proof}
 The following result is now immediate:
 \begin{cor}
\label{t.FiniteSubmodules}
Let $X$ be an $\Fr$-finite scheme. A coherent Cartier module $M$ on $X$ has only finitely many submodules $N$ such that $\Ca(N)=N$.
\end{cor}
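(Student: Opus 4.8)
The plan is to deduce this finiteness statement directly from the two preceding results, since all of the substantive work has already been carried out. Let $\CM$ denote the Cartier crystal associated to the coherent Cartier module $M$. The key observation is that \autoref{t.subCartSh.subCartCrys} provides a bijection
\[
    \{ N \subseteq M \mid \Ca(N)=N \} \longleftrightarrow \{ \text{subcrystals of } \CM \}, \qquad N \mapsto \CN,
\]
between the Cartier submodules of $M$ with surjective structural map and the subcrystals of $\CM$. Thus the finiteness of the left-hand set is equivalent to the finiteness of the right-hand set.

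First I would invoke \autoref{t.FiniteSubcrystal}, which asserts that on an $\Fr$-finite scheme any Cartier crystal contains only finitely many subcrystals. Applied to $\CM$, this immediately shows that the right-hand set above is finite. Transporting this finiteness across the bijection of \autoref{t.subCartSh.subCartCrys} then yields that there are only finitely many Cartier submodules $N \subseteq M$ with $\Ca(N)=N$, which is exactly the claim.

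There is essentially no obstacle remaining at this stage: the genuine content lies in the bijection of \autoref{t.subCartSh.subCartCrys} (whose proof rests on the fact that an inclusion $N \into N'$ of modules with surjective structural map which is a nil-isomorphism is already an equality, forcing uniqueness of the representative) and in the finiteness of subcrystals in \autoref{t.FiniteSubcrystal} (which in turn uses the finite length from \autoref{t.CrysFiniteLength} together with the finiteness of endomorphism fields of simple crystals from \autoref{t.finiteEndCrys}). Once those are granted, the corollary follows formally, which is why it can be recorded as immediate.
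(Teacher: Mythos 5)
Your proposal is correct and is exactly the argument the paper intends: the corollary is stated as immediate from the bijection of \autoref{t.subCartSh.subCartCrys} combined with the finiteness of subcrystals in \autoref{t.FiniteSubcrystal}. Nothing is missing.
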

\begin{rem}
\label{r.FiniteSubmodules}
If we call two submodules $N,N' \subseteq M$ \emph{equal up to nilpotence} if $\underline{N}=\underline{N'}$ as submodules of $M$, then \autoref{t.FiniteSubmodules} says  that a coherent Cartier module has \emph{up to nilpotence}, only finitely many Cartier submodules.
\end{rem}

\subsection{Nil-decomposition series}
Our main result \autoref{t.FiniteLenth} and its \autoref{t.CrysFiniteLength} state that in the category of Cartier crystals every object has finite length. Via standard arguments one derives from this a Jordan-Holder theory for Cartier crystals. For Cartier modules, this yields in turn a theory of Jordan-Holder series \emph{up to nilpotence}. Let us spell out what this means. Let $M$ be a coherent Cartier module. Then a \emph{nil-decomposition series} is a sequence
\[
    M = M_0 \supseteq M_1 \supseteq M_2 \supseteq \ldots M_{n-1} \supseteq M_n =0
\]
of submodules of $M$ such that for each $i$ the quotients $M_i/M_{i+1}$ are not nilpotent (equivalently $(M_i/M_{i+1})_{min} \neq 0$). A nil-decomposition series is called maximal, if and only if it cannot be refined to a longer nil-decomposition series. Clearly, a nil-decomposition series is maximal if and only if all quotients $(M_i/M_{i+1})_{min}$ are simple. We define the \emph{quasi-length} of $M$, denoted $\ql(M)$, as the length of the shortest maximal nil-decomposition series of $M$. Clearly $\ql(M)$ is equal to the length of the associated Cartier crystal $\CM$. As an immediate consequence of \autoref{t.subCartSh.subCartCrys}, we deduce:
\begin{prop}\label{t.BijJordanHolder}
Let $M$ be a coherent Cartier module on the $\Fr$-finite scheme $X$ with $M=C(M)$. Then there is a bijection between nil-decomposition series of $M$ satisfying $C(M_i)=M_i$ for all $i$ and decomposition series of the crystal represented by~$M$.
\end{prop}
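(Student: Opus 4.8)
The plan is to transport everything through the inclusion-preserving bijection of \autoref{t.subCartSh.subCartCrys}, which identifies the Cartier submodules $N \subseteq M$ satisfying $C(N)=N$ with the subcrystals of $\CM$. A nil-decomposition series with $C(M_i)=M_i$ is precisely a chain $M=M_0 \supseteq M_1 \supseteq \cdots \supseteq M_n=0$ of such submodules, so applying the bijection term by term produces a chain of subcrystals $\CM=\CM_0 \supseteq \CM_1 \supseteq \cdots \supseteq \CM_n=0$. Since that bijection is inclusion-preserving in both directions, chains are carried to chains and the whole construction is reversible; thus the only genuine content is to match up the two non-triviality conditions imposed on the successive quotients.

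Concretely, I would verify that for submodules $M_{i+1}\subseteq M_i$ with surjective structural maps the quotient $M_i/M_{i+1}$ is non-nilpotent if and only if the inclusion of subcrystals $\CM_{i+1}\subseteq\CM_i$ is strict. For this I would use that the localization functor from coherent Cartier modules to Cartier crystals is exact, being the Serre quotient by the nilpotent modules, so that the crystal attached to $M_i/M_{i+1}$ is canonically $\CM_i/\CM_{i+1}$. By \autoref{t.CrysNilp} this crystal vanishes exactly when $M_i/M_{i+1}$ is nilpotent, i.e. exactly when $\CM_i=\CM_{i+1}$ as subcrystals of $\CM$. Hence the requirement that every $M_i/M_{i+1}$ be non-nilpotent translates verbatim into the requirement that every inclusion $\CM_{i+1}\subsetneq\CM_i$ be strict, which is precisely the defining property of a decomposition series of $\CM$. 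Running this equivalence at each step of the filtration yields the asserted bijection.

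Two bookkeeping points need attention but are not serious. First, the endpoints must match: $M_0=M$ (with $C(M)=M$ by hypothesis) corresponds to $\CM_0=\CM$ and $M_n=0$ to $\CM_n=0$, so both kinds of series have the same prescribed top and bottom. Second, starting from a decomposition series of $\CM$ one must check that the submodules produced by \autoref{t.subCartSh.subCartCrys} automatically satisfy $C(M_i)=M_i$ and are genuinely nested; both are built into that proposition, since it selects the representatives with surjective structural map and is inclusion-preserving.

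The step I expect to be the crux is the identification of the quotient crystal $\CM_i/\CM_{i+1}$ with the crystal of $M_i/M_{i+1}$, that is, the exactness of the crystallization functor; once this is in place everything else is formal. It is worth remarking that the bijection respects refinements, and in particular sends maximal nil-decomposition series, those for which each $(M_i/M_{i+1})_{\min}$ is simple, to composition series of $\CM$, thereby recovering the Jordan--H\"older picture and the equality $\ql(M)=\length(\CM)$.
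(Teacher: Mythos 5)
Your proposal is correct and follows the same route as the paper, which simply deduces the proposition as an immediate consequence of \autoref{t.subCartSh.subCartCrys}; you have merely made explicit the two routine verifications (exactness of the localization functor identifying the crystal of $M_i/M_{i+1}$ with $\CM_i/\CM_{i+1}$, and the equivalence via \autoref{t.CrysNilp} of non-nilpotence with strictness of the inclusion of subcrystals) that the paper leaves to the reader.
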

This yields the following type of nil-decomposition series for $M$:
\begin{prop}
\label{t.ExlicitJordanHolder}
Let $M$ be a coherent Cartier module on the $\Fr$-finite scheme $X$.
Then there exists a finite sequence
\[
    M=M_0 \supseteq \underline{M_0} \supseteq M_1 \supseteq \underline{M_1} \supseteq \ldots M_{n-1} \supseteq \underline{M_{n-1}} \supseteq M_n \supseteq \underline{M_n} = 0
\]
such that $M_i/\underline{M_i}$ is nilpotent (and possibly zero) and $\underline{M_i}/M_{i+1}$ is not nilpotent and simple (equiv. minimal, simple, and nonzero)
\end{prop}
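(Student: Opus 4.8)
The plan is to deduce the statement from the finite length of the associated crystal together with the bijection of \autoref{t.BijJordanHolder}. First I would replace $M=M_0$ by its stable image $\underline{M_0}=\Ca^e(M)$ for $e\gg0$, which exists by \autoref{t.ImagesStabilize}; this puts the top two terms $M_0\supseteq\underline{M_0}$ of the desired chain in place, since $M_0/\underline{M_0}$ is nilpotent and $\Ca(\underline{M_0})=\underline{M_0}$ by \autoref{t.QuotSurjStruct}. By \autoref{t.CrysFiniteLength} the crystal $\CM$ has finite length $n=\ql(M)$, so it admits a decomposition series $\CM=\mathcal{C}_0\supsetneq\mathcal{C}_1\supsetneq\dots\supsetneq\mathcal{C}_n=0$ with simple quotients. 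Applying \autoref{t.BijJordanHolder} to $\underline{M_0}$ transports this to a nil-decomposition series
\[
    \underline{M_0}=N_0\supseteq N_1\supseteq\dots\supseteq N_n=0
\]
of Cartier submodules with $\Ca(N_j)=N_j$ for all $j$ and with $N_j/N_{j+1}$ representing the simple crystal $\mathcal{C}_j/\mathcal{C}_{j+1}$.

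Next I would interleave the nilpotent corrections that turn each crystal-simple quotient $N_j/N_{j+1}$ into an honestly simple (equivalently minimal) Cartier module. For $j=0,\dots,n-1$ set $M_{j+1}\subseteq N_j$ to be the preimage of the maximal nilpotent submodule $(N_j/N_{j+1})_{\nil}$ under $N_j\twoheadrightarrow N_j/N_{j+1}$, so that $M_{j+1}/N_{j+1}=(N_j/N_{j+1})_{\nil}$ is nilpotent while $N_j/M_{j+1}=(N_j/N_{j+1})_{\min}$. The intermediate modules then automatically satisfy $\underline{M_j}=N_j$: one has $\underline{M_j}\subseteq N_j$ because $M_j/N_j$ is nilpotent, and $N_j/\underline{M_j}$, being a submodule of the nilpotent $M_j/\underline{M_j}$, is nilpotent, yet as a quotient of $N_j$ it has surjective structural map and hence vanishes. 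Reading off the resulting chain
\[
    M=M_0\supseteq\underline{M_0}\supseteq M_1\supseteq\underline{M_1}\supseteq\dots\supseteq\underline{M_{n-1}}\supseteq M_n\supseteq\underline{M_n}=0
\]
with $\underline{M_j}=N_j$ and $M_n=(N_{n-1})_{\nil}$ gives exactly the asserted form: $M_j/\underline{M_j}$ is nilpotent (possibly zero), $\underline{M_j}/M_{j+1}=(N_j/N_{j+1})_{\min}$, and $\underline{M_n}=0$.

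The main obstacle, and the only genuinely non-formal point, is verifying that $(N_j/N_{j+1})_{\min}$ is simple and minimal \emph{as a Cartier module}, and not merely that its crystal is simple. Writing $Q=N_j/N_{j+1}$, the module $Q$ has surjective structural map and simple crystal, so $Q_{\min}=\overline{Q}=Q/Q_{\nil}$ is minimal. To see it is simple I would take a nonzero Cartier submodule $T\subseteq Q_{\min}$: its crystal is a subcrystal of the simple $\mathcal{Q}$, so either $T$ is nilpotent -- impossible unless $T=0$, since $Q_{\min}$ is minimal -- or $Q_{\min}/T$ is nilpotent; but $Q_{\min}/T$ inherits a surjective structural map, and a nilpotent Cartier module with surjective structural map is zero, forcing $T=Q_{\min}$. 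This is precisely the equivalence ``not nilpotent and simple $\Leftrightarrow$ minimal, simple, and nonzero'' recorded in the statement, and it is compatible with \autoref{t.MinimaSimple}. With this verified the construction terminates after $n=\ql(M)$ steps, completing the proof.
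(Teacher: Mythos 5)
Your proposal is correct and follows essentially the same route as the paper: take a Jordan--H\"older series of the crystal, transport it via \autoref{t.BijJordanHolder} to a nil-decomposition series $\underline{M}=N_0\supseteq\dots\supseteq N_n=0$ with $C(N_j)=N_j$, and define $M_{j+1}$ as the preimage of $(N_j/N_{j+1})_{\nil}$. The only difference is that you spell out the verifications ($\underline{M_j}=N_j$, and that a minimal module with simple crystal is simple as a Cartier module) which the paper leaves implicit in ``the result follows.''
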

\begin{proof}
Let $ \underline{M} = \underline{M_0} \supseteq \underline{M_1} \supseteq \ldots \supseteq \underline{M_{n-1}} \supseteq \underline{M_n} = 0$ be a decomposition series of $\underline{M}$ corresponding by the previous proposition to a Jordan-H\"older series of the crystals represented by $M$. Defining $M_i$ as the pullback under $\underline{M_{i+1}}\to \underline{M_{i+1}}/\underline{M_i}$ of $(\underline{M_{i+1}}/\underline{M_i})_{\nil}$, the result follows.
\end{proof}
The following theorem follows now formally from the finite length \emph{up to nilpotence} via standard arguments in Jordan-Hölder theory.
\begin{thm}
Let $M$ be a coherent Cartier module on the $\Fr$-finite scheme $X$.
\begin{enumerate}
\item Every nil-decomposition series of $M$ can be refined to a maximal nil-decomposition series of~$M$.
\item Every maximal nil-decomposition series of $M$ has the same length.
\item The minimal simple sub-quotients $(M_i/M_{i+1})_{min}$ are unique (up to re-ordering)
\end{enumerate}
\end{thm}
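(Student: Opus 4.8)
The plan is to transport all three assertions to the abelian category $\Crys(X)$ of Cartier crystals, where they become the classical Jordan--H\"older statements, and these hold because $\Crys(X)$ has finite length by \autoref{t.CrysFiniteLength}. The bridge is the localization functor $\Cart(X) \to \Crys(X)$, $N \mapsto \CN$. First I would apply it to a nil-decomposition series $M = M_0 \supseteq M_1 \supseteq \ldots \supseteq M_n = 0$ to obtain a chain $\CM = \CM_0 \supseteq \CM_1 \supseteq \ldots \supseteq \CM_n = 0$ of subcrystals. Using that this functor is exact, $\CM_i/\CM_{i+1}$ is the crystal associated with $M_i/M_{i+1}$, and by \autoref{t.CrysNilp} the requirement that $M_i/M_{i+1}$ be non-nilpotent is precisely the requirement $\CM_i/\CM_{i+1} \neq 0$. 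Thus a nil-decomposition series corresponds to a strictly descending chain of subcrystals, and it is maximal (all factors $(M_i/M_{i+1})_{min}$ simple) exactly when the chain is a composition series of $\CM$, since the minimal representative of $\CM_i/\CM_{i+1}$ is $(M_i/M_{i+1})_{min}$.

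For assertion (a) I would refine at the level of modules. If some factor $(M_i/M_{i+1})_{min}$ is not simple, then the crystal $\CM_i/\CM_{i+1}$ has a proper nonzero subcrystal; by the submodule--subcrystal correspondence \autoref{t.subCartSh.subCartCrys} applied to $M_i/M_{i+1}$, this subcrystal is represented by a genuine Cartier submodule, whose preimage $M'$ gives $M_i \supseteq M' \supseteq M_{i+1}$ with both $M_i/M'$ and $M'/M_{i+1}$ non-nilpotent. This inserts one term and raises the number of nonzero crystal factors by one. Since that number is bounded by $\length(\CM) < \infty$ by \autoref{t.CrysFiniteLength}, the refinement process terminates at a maximal nil-decomposition series.

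For (b) and (c) I only need the forward direction of the dictionary: every maximal nil-decomposition series of $M$ maps to a composition series of the finite-length object $\CM$, with $i$-th factor the crystal of $(M_i/M_{i+1})_{min}$. The classical Jordan--H\"older theorem in the abelian finite-length category $\Crys(X)$ then gives that all such composition series have common length $\length(\CM) = \ql(M)$ and identical simple factors up to permutation. Since isomorphism of simple crystals is equivalent to isomorphism of their minimal representatives, by functoriality and uniqueness of $(\blank)_{min}$ (\autoref{t.ExistenceMinimal}), this translates back into (b), that all maximal nil-decomposition series of $M$ have equal length, and (c), that the multiset $\{(M_i/M_{i+1})_{min}\}$ is an invariant of $M$ up to reordering.

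The one point requiring care---and the main obstacle---is the bookkeeping in the first paragraph: verifying that passage to crystals is exact on the relevant short exact sequences so that $\CM_i/\CM_{i+1}$ is genuinely the crystal of $M_i/M_{i+1}$, and that for a general $M$ not satisfying $C(M)=M$ one still obtains a clean dictionary (the refinement in (a) circumvents the hypothesis $C(M_i)=M_i$ of \autoref{t.BijJordanHolder} by realizing only the \emph{inserted} subcrystals via \autoref{t.subCartSh.subCartCrys}). Once this dictionary is in place, the remaining steps are the standard Jordan--H\"older and Schreier manipulations, legitimate here because $\Crys(X)$ is abelian of finite length.
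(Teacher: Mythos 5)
Your proposal is correct and follows essentially the same route as the paper: the paper gives no detailed proof, stating only that the theorem ``follows now formally from the finite length up to nilpotence via standard arguments in Jordan--H\"older theory,'' and your argument is exactly that formal deduction carried out explicitly --- transporting nil-decomposition series to strictly descending chains of subcrystals via the localization functor and \autoref{t.subCartSh.subCartCrys}, applying classical Jordan--H\"older in the finite-length abelian category $\Crys(X)$ (\autoref{t.CrysFiniteLength}), and translating back through minimal representatives using \autoref{t.ExistenceMinimal}. Your extra care about circumventing the hypothesis $C(M_i)=M_i$ of \autoref{t.BijJordanHolder} for a general $M$ is a point the paper glosses over, and your treatment of it is sound.
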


\section{Co-finite left $R[F]$-modules and Lyubeznik's $\mathcal{F}$-finite modules}
\label{s.applications}

The category of Cartier modules can be viewed as central axis which relates other categories of $\CO_X$-modules with other types of Frobenius related actions on them. The picture that one has is roughly as follows:
\[
\xymatrix@C=10pc{
    {\begin{Bmatrix}\text{co-finite left $R[F]$-modules}\\ {(\scriptstyle X=\Spec R, \text{ local})}\end{Bmatrix}}
    \ar[r]^{\txt{\cite{Lyub}}}
    \ar@{<->}[d]_{\txt{\scriptsize Matlis duality \\ \scriptsize (local, F-finite)}}
    \ar[rd]^{\txt{\scriptsize Matlis duality \\ \scriptsize(regular local)}} &
    {\begin{Bmatrix}\text{f.g. unit $\CO_X[F]$-modules}  \\  {\scriptstyle (\text{regular})} \end{Bmatrix}} \\
    {\begin{Bmatrix} \text{coherent Cartier modules} \\ {\scriptstyle (\text{Noetherian})} \end{Bmatrix}}
    \ar@{<->}[r]^{\scriptstyle (\text{regular $F$-finite})}_{\xleftarrow{\tensor \omega_X}{} \qquad \to[\tensor \omega_X^{-1}]} &
    {\begin{Bmatrix}\text{coherent $\gamma$-sheaves} \\ {\scriptstyle (\text{regular $F$-finite})} \end{Bmatrix}}
    \ar[u]^{\Gen}_{\txt{\cite{Bli.MinimalGamma}}}
    }
\]
The precise categories and the arrows between them will be explained in this section. The parenthesized assumptions in the diagram denote the generality in which the respective category or functor exists. It turns out that all arrows, except the ones to the top right corner, are equivalences of categories; these remaining two arrows become equivalences when the source is replaced by the respective category of crystals, \ie~after killing nilpotence. The arrows in the bottom left triangle all preserve nil-isomorphisms and hence induce an equivalence on the level of the associated crystals.

In addition to this -- and this is a main point in the manuscript \cite{BliBoe.CartierCrys} -- Grothendieck-Serre duality induces an equivalence between Cartier modules and so called $\tau$-sheaves of \cite{BoPi.CohomCrys}, \ie~left $\CO_X$-modules. However this equivalence is on the level of an appropriate derived category and hence quite more technical than the current paper. In particular, the preservation of certain functors, which we only hinted at here, is a non-trivial matter. We will not discuss this (related) viewpoint here but refer the reader to the upcoming \cite{BliBoe.CartierCrys}, or to \cite[Chapter 6]{Bli.habil} for an overview of these results.

\subsection{Co-finite modules with Frobenius action}
In this section we explain the relationship of our theory with the theory of co-finite left $R[F]$-modules for a local ring $(R,m)$ as studied before by many authors, mainly in connections to questions about local cohomology, see for example \cite{HaSp,Sharp.GradedAnnihil,Smith.rat,LyubSmith.Comm,Hara,Lyub}. The relation to our theory is via Matlis duality, quite similar to the duality in the case of a field we described above.

\subsubsection{Matlis duality}
Let $(R,\frm)$ be complete, local and $F$-finite. Denote by $E=E_R$ an injective hull of the residue field $R/\frm$ of $R$. Since $R$ is $F$-finite one has that $\Fr^\flat E_R = \Hom_R(\Fr_*R,E_R) \cong E_{\Fr_*R}$, and since $R$ and $\Fr_*R$ are isomorphic as rings we may identify $E_{\Fr_*R}$ with $E_R$, and hence get an isomorphism $E_R \cong \Fr^\flat E_R$. As in the case of fields treated in \autoref{s.CartierFields}, we fix from now on an isomorphism $\Fr^\flat E_R \cong E_R$; note that any other choice of  isomorphism $\Fr^\flat E_R \cong E_R$ will differ from our fixed one by an automorphism of $E_R$, \ie~by multiplication with a unit in $R$. We denote by $(\usc)^\vee_R=\Hom_R(\usc,E_R)$ the Matlis duality functor.
\begin{lem}
\label{t.F_*commutesMatils}
Let $(R,\frm)$ be local and $F$-finite. Then there is a functorial isomorphism $\Fr_*(\usc)^\vee_R \cong (\Fr_*\usc)^\vee_R$.
\end{lem}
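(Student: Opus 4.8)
The plan is to obtain the isomorphism as a direct consequence of the duality for the finite morphism $\Fr$ established in \autoref{t.dualityfinite}, together with the fixed isomorphism $\Fr^\flat E_R \cong E_R$ singled out just before the statement. Since $R$ is $F$-finite, the Frobenius $\Fr$ is a finite morphism, so \autoref{t.dualityfinite} applies; and because $X=\Spec R$ is affine and $\Fr$ is the identity on the underlying space, all the $\SHom$-sheaves occurring in that theorem are just the $\Hom$-modules $\Hom_R(\usc,\usc)$, while $\Fr_*$ merely twists the $R$-module structure along Frobenius.

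First I would feed $N=E_R$ into \autoref{t.dualityfinite}. Specializing the isomorphism $\Fr_*\SHom_{\CO_X}(M,\Fr^\flat N)\cong\SHom_{\CO_X}(\Fr_*M,N)$ to the affine situation yields a natural isomorphism of functors
\[
    \Fr_*\Hom_R(\usc,\Fr^\flat E_R) \cong \Hom_R(\Fr_*\usc,E_R).
\]
Next I would substitute the fixed isomorphism $\Fr^\flat E_R \cong E_R$ into the second argument of the left-hand $\Hom$; since this substitution is made in the $E_R$-slot, it is $M$-independent and leaves functoriality in the first argument untouched. Composing the two gives
\[
    \Fr_*(\usc)^\vee_R = \Fr_*\Hom_R(\usc,E_R) \cong \Fr_*\Hom_R(\usc,\Fr^\flat E_R) \cong \Hom_R(\Fr_*\usc,E_R) = (\Fr_*\usc)^\vee_R,
\]
which is exactly the asserted isomorphism of functors. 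Note that no finiteness assumption on the argument module is needed, as \autoref{t.dualityfinite} holds for arbitrary quasi-coherent modules.

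The computation itself is short; the only thing requiring care is the bookkeeping of the two different $R$-module structures in play --- the ``untwisted'' action used to form $\Hom_R(M,E_R)$ and the Frobenius-twisted action produced by $\Fr_*$ --- together with the verification that the composite isomorphism above is genuinely natural in $M$. Both points are immediate once one observes that the finite-duality isomorphism of \autoref{t.dualityfinite} is natural and that the chosen identification $\Fr^\flat E_R \cong E_R$ is a single fixed (hence $M$-independent) isomorphism. I therefore expect the main hurdle, if any, to be simply tracking these twists rather than any substantive difficulty.
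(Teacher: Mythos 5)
Your proposal is correct and is essentially identical to the paper's own proof: both apply the finite-morphism duality of \autoref{t.dualityfinite} with $N=E_R$ to get $\Hom_R(\Fr_*M,E_R)\cong\Fr_*\Hom_R(M,\Fr^\flat E_R)$ and then substitute the fixed isomorphism $\Fr^\flat E_R\cong E_R$. The only difference is the (immaterial) direction in which you read the chain of isomorphisms.
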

\begin{proof}
Using the duality for finite morphisms we get for any $R$-module $M$
\[
    \Hom_R(\Fr_*M,E_R) \cong \Fr_* \Hom_R(M,\Fr^\flat E_R) \cong \Fr_* \Hom_R(M,E_R)
\]
where the final isomorphism is induced by the fixed isomorphism above.
\end{proof}
\begin{prop}
\label{t.MatlisDual}
Let $(R,\frm)$ be complete, local and $F$-finite. Then Matlis duality induces an equivalence of categories between left $R[\Fr]$-modules which are co-finite\footnote{by co-finite we mean her Artinian as an $R$-module} as $R$-modules and coherent Cartier modules, \ie~$R$-finitely generated right $R[F]$-modules. The equivalence preserves nilpotence, and hence, since duality is exact, it also preserves nil-isomorphism.
\end{prop}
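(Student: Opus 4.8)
The plan is to follow the blueprint of the field case \autoref{t.duality.for.fields}, replacing $k$-linear duality by Matlis duality. Write $D=(\usc)^\vee_R=\Hom_R(\usc,E_R)$. The essential external input is classical Matlis duality for the complete local Noetherian ring $R$: the functor $D$ is exact and contravariant, it interchanges finitely generated (\ie~coherent) $R$-modules with Artinian (\ie~co-finite) $R$-modules, and on each of these two classes the biduality map $\mathrm{ev}_M\colon M\to M^{\vee\vee}$ is a natural isomorphism. Granting this, the finiteness bookkeeping is automatic: $M$ coherent forces $M^\vee$ co-finite and conversely, so it remains only to transport the Frobenius structure across $D$ and to verify that biduality respects it.

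First I would produce the two functors at the level of structural maps. Given a coherent Cartier module $(M,\Ca)$, viewed as an $\CO_X$-linear map $\Ca\colon \Fr_*M\to M$, applying $D$ and composing with the natural isomorphism of \autoref{t.F_*commutesMatils} yields
\[
    M^\vee \to[\Ca^\vee] (\Fr_*M)^\vee \cong \Fr_*(M^\vee),
\]
which is precisely a left $R[\Fr]$-module structure on the co-finite module $M^\vee$. Conversely, starting from a co-finite left $R[\Fr]$-module, realized as an $R$-linear map $F\colon N\to \Fr_*N$, the same lemma gives
\[
    \Fr_*(N^\vee)\cong (\Fr_*N)^\vee \to[F^\vee] N^\vee,
\]
a Cartier structure on the coherent module $N^\vee$. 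Since $D$ is contravariant, a morphism of Cartier modules is sent to a morphism of left $R[\Fr]$-modules and vice versa, so these assignments are (contravariant) functors between the two categories.

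Next I would show that the two functors are mutually quasi-inverse. The candidate natural isomorphism is the Matlis biduality map $\mathrm{ev}_M$, and the one thing to check is that it is a morphism of Cartier modules, \ie~that it intertwines $\Ca$ with the structural map of $M^{\vee\vee}$ obtained by dualizing twice. This reduces to a diagram chase combining the naturality of $\mathrm{ev}$ with the naturality of the isomorphism in \autoref{t.F_*commutesMatils}, together with the compatibility of that isomorphism with the fixed choice $\Fr^\flat E_R\cong E_R$. I expect this compatibility to be the main obstacle, since one must ensure that the identification used on $M$ and the one used on $M^\vee$ agree after double dualization; this is the precise analog of the verification carried out in \autoref{s.dualityFinite} that the adjoint of $\Ca^i$ is $\kappa^i$.

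Finally, I would treat preservation of nilpotence. Because $D$ is exact and faithful, $\Ca^e=0$ if and only if $(\Ca^e)^\vee=0$; tracking the iterates through \autoref{t.F_*commutesMatils} exactly as in the previous step identifies $(\Ca^e)^\vee$ with the $e$-th iterate of the dual structural map, so $M$ is nilpotent of order $e$ precisely when $M^\vee$ is. Hence nil-isomorphisms are preserved, the final clause following from the exactness of $D$, and assembling these pieces yields the asserted equivalence.
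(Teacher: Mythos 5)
Your proposal is correct and follows essentially the same route as the paper: the paper's proof likewise dualizes the structural map $\Ca\colon \Fr_*M \to M$ and composes with the isomorphism of \autoref{t.F_*commutesMatils} to obtain the left $R[\Fr]$-structure on $M^\vee$, does the reverse construction for the quasi-inverse, and leaves the biduality and nilpotence checks as routine. You have simply written out the details (classical Matlis duality, compatibility of biduality with the fixed identification $\Fr^\flat E_R \cong E_R$, tracking iterates of the structural map) that the paper dismisses with ``it is easy to check.''
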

\begin{proof}
If $M$ is a finitely generated Cartier module, then the dual of the structural map, together with the functorial isomorphism of the preceding lemma gives a map
\[
    M^\vee \to (\Fr_*M)^\vee \cong \Fr_*(M^\vee)
\]
which is nothing but a left action of $R[\Fr]$ on the co-finite $R$-module $M^\vee$. Conversely, the same construction works and it is easy to check that this indeed induces an equivalence of categories.
\end{proof}
This correspondence precisely explains the relationship between our \autoref{t.finitelymanyirred} and the result of Enescu and Hochster \cite[Theorem 3.6]{EnescuHochster.FrobLocCohom} discussed above in \autoref{r.EnHoFiniteness}. Further consequences are:
\begin{thm}
\label{t.FiniteLengthCofinite}
Let $(R,\frm)$ be local and $F$-finite. Let $N$ be a co-finite $R$-module with a left $R[F]$-action. Then
\begin{enumerate}
\item $N_{\nil}=\{n \in N | \Fr^e(n)=0 \text{ for some } e\}$ is nilpotent, \ie~there is $e>0$ such that $\Fr^e(N_{\nil})=0$, see \cite[Proposition 1.11]{HaSp}.
\item Up to nilpotence, $N$ has finite length, \ie~every increasing chain of submodules of $N$ eventually becomes nil-constant (\ie~successive quotients are nilpotent). $N$ has a nil-decomposition series, see \cite[Theorem 4.7]{Lyub}.
\item Up to nilpotence, $N$ has only finitely many $R[F]$-submodules. More precisely, $N$ has only finitely many submodules for which the action of $F$ on the quotient is injective.
\end{enumerate}
\end{thm}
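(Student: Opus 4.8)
The plan is to reduce to the complete case and then transport each of the three assertions through the Matlis duality equivalence of \autoref{t.MatlisDual}. First I would note that since $N$ is co-finite, i.e.~Artinian, as an $R$-module it is $\frm$-power torsion, so its $R$-submodule lattice coincides with its $\widehat{R}$-submodule lattice, and the relation $\Fr(rn)=r^q\Fr(n)$ extends from $R$ to its completion $\widehat{R}$: for each element both sides depend only on the residue of $r$ modulo a power of $\frm$ that annihilates the relevant elements, and $R$ is dense in $\widehat{R}$. As $\widehat{R}$ is again local and $F$-finite, $N$ becomes a co-finite $\widehat{R}[\Fr]$-module with the same submodules and the same notion of nilpotence; hence we may assume $R$ complete and invoke \autoref{t.MatlisDual}. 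Write $M\defeq N^\vee_R$ for the coherent Cartier module dual to $N$, with structural map $C$ dual to the Frobenius action on $N$. Since $(\usc)^\vee_R$ is an exact contravariant equivalence preserving nilpotence, it interchanges $R[\Fr]$-submodules of $N$ with Cartier quotients of $M$, turns ascending chains into descending chains, and matches ``$\Fr$ injective on a quotient'' with ``$C$ surjective on the corresponding Cartier submodule''.

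For assertion (a), the submodule $N_{\nil}=\bigcup_e\ker\Fr^e$ is the unique $\Fr$-stable submodule on which $\Fr$ is locally nilpotent and whose quotient carries an injective $\Fr$. I claim its dual is $M/\underline{M}$, where $\underline{M}=C^e(M)$ for $e\gg0$ is the stable image from \autoref{t.ImagesStabilize}. Indeed $\underline{M}^\vee_R$ is a quotient of $N$ on which $\Fr$ acts injectively (dual to $C$ being surjective on $\underline{M}$), so its kernel contains $N_{\nil}$; conversely $(M/\underline{M})^\vee_R$ is nilpotent, hence contained in $N_{\nil}$. Thus $N_{\nil}=(M/\underline{M})^\vee_R$, and since $M/\underline{M}$ is nilpotent by \autoref{t.QuotSurjStruct}, duality yields a single $e$ with $\Fr^e(N_{\nil})=0$, which is exactly the uniform nilpotence claimed.

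For (b), an ascending chain of $R[\Fr]$-submodules of $N$ dualizes to a descending chain of coherent Cartier submodules of $M$; by \autoref{t.FiniteLenth} the latter stabilizes up to nilpotence, and dualizing back shows the successive quotients of the original chain are nilpotent for large index, i.e.~the chain is eventually nil-constant. The nil-decomposition series of $N$ is obtained by dualizing the Jordan--H\"older data for $M$ coming from \autoref{t.CrysFiniteLength} and \autoref{t.ExlicitJordanHolder}. For (c), the submodules of $N$ on whose quotient $\Fr$ acts injectively correspond exactly to the Cartier submodules $M'\subseteq M$ with $C(M')=M'$; by \autoref{t.FiniteSubmodules} there are only finitely many such $M'$, hence only finitely many such $N'$. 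Since every $R[\Fr]$-submodule is ``equal up to nilpotence'' to one with injective $\Fr$ on the quotient (dual to the passage $M'\mapsto\underline{M'}$ and \autoref{r.FiniteSubmodules}), this gives the finiteness up to nilpotence.

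The main obstacle I expect is the bookkeeping of variance under the contravariant exact duality, specifically verifying cleanly that $N_{\nil}$ matches $M/\underline{M}$ and that ``$\Fr$ injective on the quotient'' matches ``$C$ surjective on the Cartier submodule''. Everything else, including the check that completion alters neither the submodule lattice nor the nilpotence behaviour, is routine once the duality dictionary is set up correctly.
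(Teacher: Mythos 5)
Your proposal is correct and follows essentially the same route as the paper, whose entire proof is to transport the three assertions through the Matlis duality equivalence of \autoref{t.MatlisDual} from \autoref{t.ImagesStabilize}, \autoref{t.FiniteLenth}, \autoref{t.FiniteSubmodules} and \autoref{r.FiniteSubmodules}. Your preliminary reduction to the complete case (using that a co-finite module is $\frm$-power torsion, so its submodule lattice and Frobenius action are unchanged under passage to $\widehat{R}$) is a detail the paper silently skips, since \autoref{t.MatlisDual} is stated only for complete rings, and your duality dictionary for $N_{\nil}$ versus $M/\underline{M}$ and for injectivity of $\Fr$ on quotients versus surjectivity of $C$ on submodules is accurate.
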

\begin{proof}
    All the statements follow via the just described duality from the corresponding statements for coherent Cartier modules (\autoref{t.ImagesStabilize}, \autoref{t.FiniteLenth}, \autoref{t.FiniteSubmodules} and \autoref{r.FiniteSubmodules}).
\end{proof}

In \cite{EnescuHochster.FrobLocCohom}, Enescu and Hochster derive under certain purity and Gorenstein conditions the finiteness of the actual number (and not just their number up to nilpotence) of $R[F]$-submodules of the top local cohomology module $H^d_{\frm}(R)$. We will give a simple version of this for Cartier modules, and show how this implies a slight generalization of results in \cite{EnescuHochster.FrobLocCohom}, see \cite[Discussion 4.5]{EnescuHochster.FrobLocCohom}. This is directly related to \autoref{r.CompatiblySplit} on compatibly split subvarieties.
\begin{prop}
\label{t.FiniteCartFsplit}
Let $R$ be $F$-finite and $F$-split, \ie~there is a map $\Ca:\Fr^e_*R \to R$ splitting the Frobenius $\Fr: R \to \Fr^e_*R$. Then the Cartier module $(R,\Ca)$ has only finitely many Cartier submodules.
\end{prop}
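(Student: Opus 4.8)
The plan is to deduce the statement from the general finiteness result \autoref{t.FiniteSubmodules}, whose content (by \autoref{r.FiniteSubmodules}) is only finiteness \emph{up to nilpotence} for an arbitrary coherent Cartier module; the entire point of the $\Fr$-split hypothesis will be to remove this nilpotence slack. First I would record the effect of the splitting at the level of elements. Unwinding the definition of the Frobenius map $\Fr^e\colon R\to\Fr^e_*R$, which is $r\mapsto r^{q^e}$, the splitting relation $\Ca\circ\Fr^e=\id_R$ says precisely that $\Ca(r^{q^e})=r$ for all $r\in R$; in particular $\Ca(1)=1$, so the structural map of the Cartier module $(R,\Ca)$ is surjective.

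Next, observe that a Cartier submodule of $(R,\Ca)$ is nothing but an ideal $I\subseteq R$ with $\Ca(I)\subseteq I$. The key step --- and the crucial use of $\Fr$-splitness --- is the claim that every such $I$ automatically satisfies $\Ca(I)=I$. Indeed, for $x\in I$ we have $x^{q^e}\in I$ since $I$ is an ideal, and $\Ca(x^{q^e})=x$ by the splitting relation above; hence $x\in\Ca(I)$, giving $I\subseteq\Ca(I)\subseteq I$. In other words, the $\Fr$-split hypothesis forces every Cartier submodule of $(R,\Ca)$ to have surjective structural map, so that there is no difference between the Cartier submodules of $(R,\Ca)$ and the submodules $N$ with $\Ca(N)=N$.

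With this in hand the conclusion is immediate: since $R$ is $\Fr$-finite and $(R,\Ca)$ is a coherent Cartier module, \autoref{t.FiniteSubmodules} guarantees that there are only finitely many submodules $N$ with $\Ca(N)=N$, and by the previous paragraph these are exactly all the Cartier submodules. Here $(R,\Ca)$ is regarded as a Cartier module for the Frobenius power $\Fr^e$; the theory of the preceding sections applies verbatim with $q$ replaced by $q^e$, since $\Fr$-finiteness is insensitive to the power. The main --- and essentially only --- obstacle is the passage from finiteness \emph{up to nilpotence} to genuine finiteness, and this is resolved entirely by the elementary identity $\Ca(x^{q^e})=x$, which is precisely what distinguishes this sharper statement from the weaker \autoref{r.FiniteSubmodules} that holds for all coherent Cartier modules.
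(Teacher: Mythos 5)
Your proof is correct and takes essentially the same route as the paper's: the key observation in both is that the splitting forces $\Ca(I)=I$ for every Cartier submodule $I$ (the paper phrases your elementwise computation as $I=\Ca(\Fr(I))=\Ca(I^{[q^e]})\subseteq\Ca(I)\subseteq I$), after which \autoref{t.FiniteSubmodules} finishes the argument. Your extra remark about regarding $(R,\Ca)$ as a Cartier module for the power $\Fr^e$ is a harmless clarification the paper leaves implicit.
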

\begin{proof}
Let $I \subseteq R$ be a Cartier submodule, \ie~an ideal such that $C(I) \subseteq I$. Then since $\Ca$ splits $\Fr$ we have $I=\Ca(\Fr(I))=\Ca(I^{[q^e]}) \subseteq \Ca(I) \subseteq I$. This shows that any Cartier submodule has the property that $\Ca(I)=I$, hence the claim follows by \autoref{t.FiniteSubmodules}.
\end{proof}
In analogy with a definition in \cite{EnescuHochster.FrobLocCohom} we say that a Cartier module $(M,C)$ is \emph{anti-nilpotent} if for all Cartier submodules $N\subseteq M$ we have $C(N)=N$. For such \autoref{t.FiniteSubmodules} yields:
\begin{prop}
\label{t.antinil-and-submod}
An anti-nilpotent Cartier module has only finitely many Cartier submodules.\footnote{The converse does not hold, as the example of a simple $\CO_X$-module with zero structural map shows.}
\end{prop}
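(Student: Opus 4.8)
The plan is to deduce this immediately from \autoref{t.FiniteSubmodules}. The crucial observation is that the anti-nilpotence hypothesis is exactly the device that converts the condition appearing in \autoref{t.FiniteSubmodules} into a statement about \emph{all} Cartier submodules. First I would recall the definition: $(M,C)$ is anti-nilpotent precisely when every Cartier submodule $N \subseteq M$ satisfies $C(N) = N$. Therefore the collection of all Cartier submodules of $M$ coincides, as a set, with the collection of those Cartier submodules $N$ for which $C(N) = N$.

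Next I would invoke \autoref{t.FiniteSubmodules}, which applies since we are working with a coherent Cartier module over an $\Fr$-finite scheme: it asserts that $M$ has only finitely many Cartier submodules $N$ with $\Ca(N) = N$. Combining this with the set-theoretic identification from the previous step, the collection of all Cartier submodules of $M$ is finite, which is the assertion of the proposition.

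The \emph{only} subtlety—and it is not really an obstacle—is to be clear that \autoref{t.FiniteSubmodules} a priori bounds only the number of submodules \emph{fixed} by the structural map, not the number of arbitrary Cartier submodules; it is the anti-nilpotent hypothesis, and nothing else, that removes this restriction by guaranteeing $C(N)=N$ for every Cartier submodule $N$ without exception. I would also note the contrast recorded in the accompanying footnote: finiteness of the number of submodules does \emph{not} conversely force anti-nilpotence, as a simple $\CO_X$-module with zero structural map (which has no nontrivial Cartier submodules, yet fails $C(M)=M$) already demonstrates. Thus the proof is genuinely a one-line consequence of the definition together with \autoref{t.FiniteSubmodules}, and no induction or additional geometric input is needed.
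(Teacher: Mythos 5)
Your proof is correct and is exactly the paper's argument: the paper introduces anti-nilpotence as the condition that every Cartier submodule $N$ satisfies $C(N)=N$ and then simply cites \autoref{t.FiniteSubmodules}, just as you do. No further comment is needed.
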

It is not difficult to check that anti-nilpotent is equivalent to each of the following conditions.\footnote{Anti-nilpotent means that all Cartier submodules have surjective structural map. The surjectivity of the structural map, however, always passes to Cartier module quotients, hence if $M$ is anti-nilpotent, then all sub-quotients have surjective structural map. In particular there are no nilpotent sub-quotients, \cf~\cite[Proposition 4.6]{EnescuHochster.FrobLocCohom}}
(a) $M$ does not have non-trivial nilpotent Cartier sub-quotients.
(b) $M$ does not have non-trivial Cartier sub-quotients with zero structural map.
(c) $M$ and all its Cartier submodules and quotients are minimal.
(d) $M$ and all its Cartier submodules and quotients are anti-nilpotent.

Specializing to the case that $X=\Spec R$ with $R$ a complete local ring, we obtain from \autoref{t.FiniteCartFsplit} via Matlis duality in \autoref{t.MatlisDual} that a coherent Cartier module $M$ is anti-nilpotent if its Matlis dual $M^\vee$ is an anti-nilpotent left $R[\Fr]$-module, meaning that $M^\vee$ has no nilpotent $R[F]$-sub-quotients, \cf~\cite[Definition 4.7]{EnescuHochster.FrobLocCohom}. We obtain the following extension of \cite[Corollary 4.17]{EnescuHochster.FrobLocCohom}.
\begin{prop}
\label{t.AntiNilpSplit}
Let $(R,\frm)$ be complete, local, $F$-finite, and $F$-split.
\begin{enumerate}
\item \label{z.AntiNilpSplit.b}The injective hull of the residue field $E_R$ has \emph{some} left $R[F]$-structure for which $E_R$ is anti-nilpotent.
\item \label{z.AntiNilpSplit.c}If $R$ is also quasi-Gorenstein (i.e. $H^d_\frm(R) \cong E_R$), then the top local cohomology module $H^d_\frm(R)$ with its \emph{canonical} left $R[F]$-module structure is anti-nilpotent and hence by \autoref{t.antinil-and-submod} has only finitely many $R[F]$-submodules, \cf~\cite[Theorem 3.7, Corollary 4.17]{EnescuHochster.FrobLocCohom}.
\end{enumerate}
\end{prop}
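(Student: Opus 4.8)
The plan is to reduce both parts to the anti-nilpotence of a suitable Cartier module and then invoke \autoref{t.antinil-and-submod} together with the Matlis duality of \autoref{t.MatlisDual}. For part \autoref{z.AntiNilpSplit.b}, I would first note that the computation in the proof of \autoref{t.FiniteCartFsplit} gives more than finiteness: for an $F$-splitting $\Ca\colon \Fr_*R\to R$ every Cartier submodule $I\subseteq R$ satisfies $\Ca(I)=I$, so $(R,\Ca)$ is anti-nilpotent in the sense introduced above. Since $R$ is complete, local and $F$-finite, Matlis duality \autoref{t.MatlisDual} is an exact, nilpotence-preserving contravariant equivalence between coherent Cartier modules and co-finite left $R[\Fr]$-modules; as anti-nilpotence is precisely the absence of nilpotent sub-quotients (characterization (a) after \autoref{t.antinil-and-submod}) and duality carries sub-quotients to sub-quotients, the dual $R^\vee=E_R$ inherits a left $R[\Fr]$-structure that is again anti-nilpotent. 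This is the asserted structure on $E_R$.

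For part \autoref{z.AntiNilpSplit.c} I would pass to the dual side. Under local duality the canonical left $R[\Fr]$-action on $H^d_\frm(R)$ is Matlis dual to the canonical Cartier (trace) operator $\Ca_0$ on the dualizing module $\omega_R$, so by \autoref{t.MatlisDual} it suffices to show that $(\omega_R,\Ca_0)$ is anti-nilpotent. The quasi-Gorenstein hypothesis gives $\omega_R\cong R$, under which $\Ca_0$ becomes an element of $\Hom_R(\Fr_*R,R)\cong \Fr_*\omega_R$, which is free of rank one over $\Fr_*R$ and generated precisely by the trace $\Ca_0$. Hence any $F$-splitting $\phi$ can be written $\phi=v\cdot\Ca_0$ with $v\in R$, that is $\phi(x)=\Ca_0(vx)$. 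Now let $I$ be any Cartier submodule of $(R,\Ca_0)$, so $\Ca_0(I)\subseteq I$. Since $I$ is an ideal we have $vI\subseteq I$, whence $\phi(I)=\Ca_0(vI)\subseteq \Ca_0(I)\subseteq I$; thus $I$ is also a Cartier submodule of the anti-nilpotent module $(R,\phi)$ and therefore $\phi(I)=I$. Combining these, $I=\phi(I)=\Ca_0(vI)\subseteq \Ca_0(I)\subseteq I$, so $\Ca_0(I)=I$. Thus $(\omega_R,\Ca_0)$ is anti-nilpotent, hence so is $H^d_\frm(R)$, and \autoref{t.antinil-and-submod} yields finitely many $R[\Fr]$-submodules.

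The step I expect to be the main obstacle is the identification underlying part \autoref{z.AntiNilpSplit.c}: that the canonical $R[\Fr]$-structure on $H^d_\frm(R)$ corresponds under local duality to the trace operator on $\omega_R$, and that in the quasi-Gorenstein case this trace is exactly the free generator of $\Hom_R(\Fr_*R,R)$ over $\Fr_*R$. Granting this, the sandwich argument above is elementary; but without it one cannot relate the intrinsic (canonical) operator $\Ca_0$ to the chosen splitting $\phi$, and it is precisely the factorization $\phi=v\cdot\Ca_0$ that transports anti-nilpotence of $(R,\phi)$ to $(R,\Ca_0)$. I would also take care that \emph{anti-nilpotent} transports correctly across the contravariant Matlis equivalence, using the sub-quotient characterization listed after \autoref{t.antinil-and-submod} rather than the definition via submodules, since submodules and quotients are interchanged by duality.
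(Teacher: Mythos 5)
Your proof is correct, and part (a) coincides with the paper's. For part (c) you run what is in substance the same argument, but on the opposite side of Matlis duality: the paper works upstairs with $H=H^d_\frm(R)$, compares the two left actions $C^\vee$ (dual to the splitting) and the canonical $F_H$, and factors $C^\vee = r\cdot F_H$ using that the adjoint $\theta:\Fr^*H\to H$ of $F_H$ is an isomorphism together with $\End_R(E_R)\cong R$; you work downstairs with the Cartier modules $(R,\phi)$ and $(R,\Ca_0)$ and factor $\phi = v\cdot \Ca_0$ using that $\Hom_R(\Fr_*R,R)$ is free of rank one over $\Fr_*R$ with the trace as generator. These two factorizations are Matlis duals of one another, and both hinge on the same crux that you correctly single out: that the \emph{canonical} structure (equivalently its adjoint) is invertible, which the paper settles by the \v{C}ech-complex computation cited from \cite[Example 2.7]{Bli.PhD} and which on your side amounts to $\Fr^\flat\omega_R\cong\Fr_*\omega_R$ plus $\omega_R\cong R$. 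Your transport of anti-nilpotence is also slightly tidier: the sandwich $I=\phi(I)=\Ca_0(vI)\subseteq\Ca_0(I)\subseteq I$ directly verifies the definition (surjectivity on every Cartier submodule), whereas the paper argues via the comparison of sub-quotients for the two actions; and staying on the Cartier side lets you avoid the explicit use of $\End_R(E_R)\cong R$. The one point to make fully precise in a write-up is the identification you flag yourself, namely that the canonical $R[F]$-action on $H^d_\frm(R)$ is Matlis dual to the trace generator of $\Hom_R(\Fr_*R,\omega_R)$; granting that, the rest is complete.
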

\begin{proof}
For part \autoref{z.AntiNilpSplit.b}, let $C: \Fr_*R \to R$ be a splitting of the Frobenius. In the proof of \autoref{t.FiniteCartFsplit} it was observed that the Cartier module $(R,C)$ is anti-nilpotent. The duality of \autoref{t.MatlisDual} induces an $R[\Fr]$-module structure $C^\vee: E_R \to \Fr_* E_R$ and the anti-nilpotence of $(R,C)$ immediately translates into the anti-nilpotence of $(E_R,C^\vee)$.

Let us show \autoref{z.AntiNilpSplit.c}. Abbreviating $H=H^d_\frm(R)$, we have $H=H^d_\frm(R) \cong E_R$ since $R$ is quasi-Gorenstein. Part \autoref{z.AntiNilpSplit.b} shows that $(H,C^\vee)$ is an anti-nilpotent left $R[\Fr]$-module. At the same time, by the functoriality of local cohomology the Frobenius on $R$ induces another (natural) Frobenius action $F_H:H \to \Fr_*H$. We claim that $C^\vee$ factors through $F_H$, \ie, that there is an $r \in R$ such that $C^\vee=r\cdot F_H$. Suppose that this claim holds. Then every $R[F]$-submodule $N \subseteq H$ under the action coming from $F_H$ is also an $R[F]$-submodule of $H$ under the action coming from $C^\vee$, because $C^\vee(N) = rF_H(N) \subseteq rN \subseteq N$. Hence the subquotients of $H$ for the $F_R$-action are a subset of those for the $C^\vee$-action. By \autoref{z.AntiNilpSplit.b} none of the non-zero subquotients in the latter set is nilpotent for the action of $C^\vee$. The factorization $C^\vee=rF_H$ implies the same for the subquotients in the former set for the action of~$F_H$, proving the anti-nilpotence of $(H,F_H)$.

It remains to show the factorization $C^\vee=rF_H$. Since the top local cohomology is the cokernel of an appropriate \Cech~complex one easily observes that the adjoint (under adjointness for $\Fr^*$ and $\Fr_*$) of $F_H$ is an isomorphism, see \cite[Example 2.7]{Bli.PhD}. If $\theta':\Fr^*H \to H$ denotes the adjoint of $C^\vee$, the composition $\theta'\circ\theta^{-1}$ is an $R$-linear endomorphism of $H$. Since $R$ is complete, and $H \cong E_R$ we have that $\theta'\circ\theta^{-1}$ is given by multiplication by an element $r \in R$. Hence $\theta'=r\theta$ which implies for their adjoints the claimed equality $C^\vee = r F_H$.
\end{proof}

\subsection{Lyubeznik's $\Fr$-finite modules}
The connection of our theory of Cartier modules with Lyubeznik's theory of $\mathcal{F}$-finite modules is via his notion of roots, or generators. This has been worked out in quite some detail in \cite{Bli.MinimalGamma} where the first author introduces a category of $\gamma$-sheaves (corresponding to Lyubeznik's generators) and shows that the category of $\gamma$-crystals ($\gamma$-sheaves modulo nilpotence) is equivalent to Lyubeznik's $\mathcal{F}$-finite modules. In this section we show that under a reasonable hypothesis ($X$ is regular, $F$-finite and sufficiently affine) the category of Cartier modules is equivalent to the category of $\gamma$-sheaves and the equivalence preserves nilpotence. This is a slight variant of work in \cite{BliBoe.CartierCrys} where the case $X$ regular and essentially of finite type over an $F$-finite field is treated.

\subsubsection{Cartier modules and \texorpdfstring{$\gamma$}{gamma}-sheaves}
In \cite{Bli.MinimalGamma} the first author introduced -- motivated by Lyubeznik's concept of a root in \cite{Lyub} -- the category of $\gamma$-sheaves. For a regular and $F$-finite scheme $X$, this is the category consisting of quasi-coherent $\CO_X$-modules $N$ equipped with an $\CO_X$-linear map $\gamma: N \to \Fr^*N$. The theory develops quite analogously as in the case of Cartier modules, taking the viewpoint that a Cartier module is given by a map $\kappa: M \to \Fr^\flat M$ and replacing $\Fr^\flat$ by $\Fr^*$. In particular, there is a notion of nilpotence, meaning that $\gamma^i=0$ for some $i$, where $\gamma^i=\Fr^*\gamma^{i-1}\circ \gamma$. Nilpotent $\gamma$-sheaves form a Serre sub-category of the abelian category of $\gamma$-shaves, there is an abelian category of $\gamma$-crystals and so forth. A reason why one has to assume regularity for $X$ in the context of $\gamma$-sheaves is because one needs the exactness of $\Fr^*$ to guarantee that the kernel of a map of $\gamma$-sheaves naturally is also a $\gamma$-sheaf. In the case of Cartier modules, the exactness of $\Fr_*$ holds in general since $\Fr$ is an affine morphism. The passage from $\gamma$-sheaves to Cartier modules is achieved by tensoring with the dualizing sheaf $\omega_X$. We recall the necessary facts below.
\begin{lem}
\label{t.fflatForflat}
Let $f: Y \to X$ be a finite flat morphism and $M$ a quasi-coherent $\CO_X$-module, then there is a functorial isomorphism
\[
    f^\flat \CO_X \tensor_{\CO_Y} f^* M \to[\cong] f^\flat M
\]
\end{lem}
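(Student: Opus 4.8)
The plan is to reduce the assertion to the affine case and then recognize the desired map as the standard tensor-hom comparison isomorphism for a finitely generated projective module. Since both sides of the asserted isomorphism are quasi-coherent $\CO_Y$-modules and the comparison map will be constructed functorially in $M$ (hence compatibly with restriction to opens), it suffices to produce a natural $\CO_Y$-linear isomorphism Zariski-locally on $Y$ and $X$. So I would take $Y=\Spec S$ and $X=\Spec R$ affine with $R\to S$ the finite flat ring map induced by $f$, and recall from \autoref{s.dualityFinite} that in this situation $f^\flat N=\Hom_R(S,N)$ as an $S$-module via the action on the source $S$, while $f^*M=S\tensor_R M$.

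First I would simplify the left-hand side. Using associativity of the tensor product, $f^\flat\CO_X\tensor_{\CO_Y}f^*M$ becomes $\Hom_R(S,R)\tensor_S(S\tensor_R M)\cong\Hom_R(S,R)\tensor_R M$, so the claim reduces to exhibiting a natural $S$-linear isomorphism
\[
    \Hom_R(S,R)\tensor_R M \to[\cong] \Hom_R(S,M), \qquad \phi\tensor m\mapsto\bigl(s\mapsto\phi(s)m\bigr).
\]
A direct check shows that this map is $S$-linear for the $S$-structures coming from the action on the source $S$ of both $\Hom$'s, so that it is indeed a morphism of $\CO_Y$-modules rather than merely of $\CO_X$-modules.

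The crucial input is that a finite flat module over a Noetherian ring is finitely generated projective. Since our schemes are locally Noetherian, $R$ is Noetherian, so the finiteness and flatness of $S$ over $R$ make $S$ a finitely generated projective $R$-module. For such $S$ the displayed comparison map is an isomorphism: the natural transformation $\Hom_R(S,R)\tensor_R(\usc)\to\Hom_R(S,\usc)$ is evidently an isomorphism when $S=R$ (both functors are then naturally isomorphic to the identity), hence when $S$ is free of finite rank; since both functors and the transformation are additive in $S$, the isomorphism passes to any direct summand of a finite free module, that is, to any finitely generated projective $S$.

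Finally, since all identifications above are natural in $M$ and compatible with localization of $R$, the locally defined isomorphisms glue to a global functorial isomorphism of $\CO_Y$-modules, as claimed. I do not anticipate a genuine obstacle: the two points requiring care are checking the $S$-linearity of the comparison map (so that it descends to the category of Cartier-type $\CO_Y$-modules) and invoking \emph{finite flat over Noetherian implies finitely generated projective}, which is precisely what forces the tensor-hom map to be an isomorphism.
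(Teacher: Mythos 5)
Your proof is correct and follows essentially the same route as the paper: write down the tensor--hom comparison map $\phi\tensor m\mapsto(s\mapsto\phi(s)m)$, reduce to the affine case, and use that finite flat over a Noetherian ring forces $S$ to be finitely generated projective (the paper phrases this as reducing to $S$ finite free Zariski-locally) so that the comparison map is an isomorphism. The only cosmetic difference is that you pass through direct summands of free modules rather than localizing further to make $S$ actually free; both are standard and equivalent here.
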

\begin{proof}
One has an $\CO_Y$-linear map given by sending a section $\phi \tensor_{\CO_Y} s \tensor_{\CO_X} m$ to the map sending a section $r \in \CO_Y$ to $\phi(sr)\cdot m$. To verify that it is an isomorphism we may assume that $Y=\Spec S$, $X=\Spec R$ are affine and $S$ is a finitely generated and free $R$-module. Then everything comes down to checking that the homomorphism
\[
    \Hom_R(S,R) \tensor_R M \to[\phi \tensor n \mapsto (r \mapsto \phi(r)n)] \Hom_R(S,M).
\]
is bijective, which is easily verified since $S$ is finite and free over $R$.
\end{proof}
Applying this lemma to $M=\omega_X$, a dualizing sheaf, we obtain as an immediate corollary:
\begin{cor}
Let $f: Y \to X$ be finite and flat, and suppose that $\omega_X$ is invertible. Then
\begin{align*}
f^\flat \CO_X & \cong f^\flat \omega_X \tensor f^*\omega_X^{-1} \\
f^\flat(\omega_X \tensor M) &\cong f^\flat\omega_X \tensor f^*M \\
f^*(\omega_X^{-1} \tensor M) &\cong  (f^\flat\omega_X)^{-1}  \tensor f^\flat M .
\end{align*}
If we assume in addition that $f^\flat \omega_X \cong \omega_Y$, then
\begin{align*}
\phantom{the relative dualizing}f^\flat\CO_X &=\omega_{Y/X}\text{ the relative dualizing sheaf}\\
f^\flat(\omega_X \tensor M) &\cong \omega_Y \tensor f^*M \\
f^*(\omega_X^{-1} \tensor M) &\cong  \omega_Y^{-1}  \tensor f^\flat M .
\end{align*}
\end{cor}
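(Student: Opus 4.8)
The plan is to derive all six isomorphisms formally from \autoref{t.fflatForflat}, which supplies the functorial identity $f^\flat\CO_X \tensor_{\CO_Y} f^*M \cong f^\flat M$ for every quasi-coherent $\CO_X$-module $M$. The only input beyond this lemma is that $\omega_X$ invertible forces $f^*\omega_X$ to be invertible as well (the pullback of a line bundle is a line bundle, with $f^*(\omega_X^{-1})\cong (f^*\omega_X)^{-1}$), so that one may freely tensor with inverses.

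First I would specialize the lemma to $M=\omega_X$, getting $f^\flat\CO_X \tensor f^*\omega_X \cong f^\flat\omega_X$; tensoring both sides with the invertible sheaf $f^*\omega_X^{-1}$ yields the first identity $f^\flat\CO_X \cong f^\flat\omega_X \tensor f^*\omega_X^{-1}$. For the second identity I would apply the lemma to $\omega_X\tensor M$ in place of $M$ and use that $f^*$ commutes with tensor products, $f^*(\omega_X\tensor M)\cong f^*\omega_X\tensor f^*M$; regrouping and substituting the first identity gives
\[
    f^\flat(\omega_X\tensor M) \cong f^\flat\CO_X \tensor f^*\omega_X \tensor f^*M \cong f^\flat\omega_X \tensor f^*M.
\]
The third identity is then the second one read backwards: replacing $M$ by $\omega_X^{-1}\tensor M$ turns the left-hand side into $f^\flat M$ and the right-hand side into $f^\flat\omega_X \tensor f^*(\omega_X^{-1}\tensor M)$, and solving for $f^*(\omega_X^{-1}\tensor M)$ produces $(f^\flat\omega_X)^{-1}\tensor f^\flat M$. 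The second block of three isomorphisms then follows by substituting the additional hypothesis $f^\flat\omega_X\cong\omega_Y$ into the first block; the equality $f^\flat\CO_X=\omega_{Y/X}$ is either the definition of the relative dualizing sheaf for the finite flat $f$ (it is $f^!\CO_X$ in degree zero) or, combined with the first identity, the standard formula $\omega_{Y/X}=\omega_Y\tensor f^*\omega_X^{-1}$.

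The one point requiring care, and the only place where the argument is not purely formal, is the passage to the inverse $(f^\flat\omega_X)^{-1}$ occurring in the third isomorphism of each block: it presupposes that $f^\flat\omega_X$—equivalently $f^\flat\CO_X$, which is locally free of finite rank as an $\CO_X$-module—is invertible as an $\CO_Y$-module. This is exactly the condition that $f$ be a Gorenstein morphism (each fibre ring must be Gorenstein, so that $\Hom_{k(x)}(\CO_{Y,x},k(x))$ is free of rank one over $\CO_{Y,x}$), and it is precisely what the extra hypothesis $f^\flat\omega_X\cong\omega_Y$, with $\omega_Y$ invertible, supplies. The first two identities of the first block need only $\omega_X$ invertible, whereas under the Gorenstein hypothesis the inverses become legitimate and the remaining identities hold verbatim; in the intended application $f=\Fr$ this is automatic since $\Fr^\flat\omega_X\cong\omega_X$.
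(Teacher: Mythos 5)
Your derivation is exactly the paper's intended argument: the corollary is presented there as an immediate consequence of \autoref{t.fflatForflat} applied to $M=\omega_X$, with no further proof given, and your formal manipulations fill in precisely those steps. Your additional observation that the inverse $(f^\flat\omega_X)^{-1}$ in the first block requires $f^\flat\omega_X$ to be invertible as an $\CO_Y$-module (\ie~$f$ Gorenstein) is a legitimate point the paper glosses over, though it is harmless in the intended application $f=\Fr$ on a regular scheme, where it holds automatically.
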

Note that the additional assumption that $f^\flat \omega_X \cong \omega_Y$ is satisfied if $X$ is normal and either $X$ is essentially of finite over a local Gorenstein ring, or $X$ is sufficiently affine, as explained in \autoref{s.dualizingsheaf}.
\begin{thm}
\label{t.CartierGammaEquiv}
Let $X$ be regular and $F$-finite, and assume that there is a dualizing sheaf $\omega_X$ such that $F^\flat\omega_X \cong \omega_X$. Then the category of Cartier modules on $X$ is equivalent to the category of $\gamma$-sheaves on $X$.

The equivalence is given by tensoring with $\omega^{-1}_X$, its inverse by tensoring with $\omega_X$. This preserves coherence, nilpotence, and nil-isomorphism, and hence induces an equivalence between Cartier crystals and $\gamma$-crystals.
\end{thm}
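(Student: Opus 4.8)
The plan is to build quasi-inverse functors by tensoring with the invertible sheaf $\omega_X$ and its inverse, transporting the structural maps along the isomorphisms furnished by the corollary to \autoref{t.fflatForflat}. First I would observe that since $X$ is regular it is Gorenstein, so $\CO_X$ is a dualizing complex and the dualizing sheaf $\omega_X$ is invertible; hence $\usc\tensor\omega_X^{-1}$ and $\usc\tensor\omega_X$ are mutually inverse exact auto-equivalences of the categories of quasi-coherent (resp.\ coherent) $\CO_X$-modules. The hypothesis $\Fr^\flat\omega_X\cong\omega_X$ lets me apply that corollary with $f=\Fr$ (so that $Y=X$ and $\omega_Y=\omega_X$), yielding a natural isomorphism
\[
    \theta_M\colon \omega_X^{-1}\tensor\Fr^\flat M \to[\cong] \Fr^*(\omega_X^{-1}\tensor M),
\]
together with its counterpart $\omega_X\tensor\Fr^*N\cong\Fr^\flat(\omega_X\tensor N)$ in the other direction.

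Using the description of a Cartier module via its adjoint map $\kappa\colon M\to\Fr^\flat M$ (\autoref{t.CartAdjoint}), I would send $(M,\kappa)$ to the $\gamma$-sheaf with underlying module $\omega_X^{-1}\tensor M$ and structural map
\[
    \gamma \defeq \theta_M\circ(\id_{\omega_X^{-1}}\tensor\kappa)\colon \omega_X^{-1}\tensor M \to \Fr^*(\omega_X^{-1}\tensor M),
\]
and conversely send a $\gamma$-sheaf $(N,\gamma)$ to the Cartier module $\omega_X\tensor N$ whose adjoint structural map is $\id_{\omega_X}\tensor\gamma$ followed by the isomorphism $\omega_X\tensor\Fr^*N\cong\Fr^\flat(\omega_X\tensor N)$. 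Because tensoring with an invertible sheaf is functorial and $\theta_M$ is natural in $M$, a morphism of Cartier modules is carried to a morphism of $\gamma$-sheaves and conversely, so both assignments are functors. That they are mutually inverse is then clear on underlying sheaves from $\omega_X\tensor\omega_X^{-1}\tensor M\cong M$; one only has to match the two transported structural maps, which reduces to the statement that the two isomorphisms of the corollary are inverse to each other after the canonical identification $\omega_X\tensor\omega_X^{-1}\cong\CO_X$.

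Preservation of coherence is immediate since tensoring with an invertible sheaf preserves coherence. For nilpotence I would compare the iterated maps, using $\kappa^i=\Fr^\flat(\kappa^{i-1})\circ\kappa$ and $\gamma^i=\Fr^*(\gamma^{i-1})\circ\gamma$. The main technical point — and the step I expect to require the real work — is to check that the isomorphisms $\theta_M$ are compatible with these iterations, i.e.\ that the evident square relating $\Fr^*\theta_M$ and $\theta_{\Fr^\flat M}$ commutes, so that $\gamma^i=\theta_M^{(i)}\circ(\id_{\omega_X^{-1}}\tensor\kappa^i)$ for the iterated isomorphism $\theta_M^{(i)}\colon \omega_X^{-1}\tensor(\Fr^\flat)^{i}M\to(\Fr^*)^{i}(\omega_X^{-1}\tensor M)$. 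This is the exact analogue of the (tedious but routine) verification in \autoref{s.dualityFinite} that the adjoint of $\Ca^i$ is $\kappa^i$, and it follows by induction from the naturality of $\theta$ together with the explicit formula for the isomorphism of \autoref{t.fflatForflat}. Granting this, $\gamma^i=0$ if and only if $\id_{\omega_X^{-1}}\tensor\kappa^i=0$ (as $\theta_M^{(i)}$ is an isomorphism), and since $\usc\tensor\omega_X^{-1}$ is an equivalence, in particular faithful, this holds if and only if $\kappa^i=0$; so nilpotence is preserved in both directions. Finally, exactness of $\usc\tensor\omega_X^{\pm1}$ means it commutes with kernels and cokernels, so together with the preservation of nilpotence it preserves nil-isomorphisms. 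The equivalence therefore descends to the localizations, yielding the asserted equivalence between Cartier crystals and $\gamma$-crystals.
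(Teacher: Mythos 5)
Your proposal is correct and follows essentially the same route as the paper: tensor with $\omega_X^{\pm1}$, transport the structural maps along the isomorphisms from the corollary to \autoref{t.fflatForflat}, and use functoriality (for nilpotence) and exactness of tensoring with an invertible sheaf (for nil-isomorphisms) to descend to crystals. The only point worth making explicit is that applying that corollary to $f=\Fr$ requires the Frobenius to be finite \emph{and flat}, which the paper supplies via Kunz's theorem for regular $X$.
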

\begin{proof}
Since, if $X$ is regular, the Frobenius $\Fr$ is flat by \cite{Kunz}, we may apply the preceding corollary to obtain isomorphisms
\[
    \omega_X^{-1} \tensor \Fr^\flat M \cong \Fr^*(\omega_X^{-1} \tensor M) \text{ and }
    \omega_X \tensor \Fr^* N \cong \Fr^\flat(\omega_X \tensor N)
\]
This shows that if $M \to[\kappa] \Fr^\flat M$ is a Cartier module, then
\[
    \omega_X^{-1} \tensor M \to[\omega_X^{-1}\tensor \kappa] \omega_X^{-1} \tensor \Fr^\flat M \cong \Fr^*(\omega_X^{-1} \tensor M)
\]
gives $\omega_X^{-1} \tensor M$ a natural structure of a $\gamma$-sheaf. Conversely, if $N \to[\gamma] \Fr^* N$ is a $\gamma$-sheaf, then
\[
    \omega_X \tensor N \to[\omega_X \tensor \gamma] \omega_X \tensor \Fr^* N \cong \Fr^\flat(\omega_X \tensor N)
\]
equips $\omega_X \tensor N$ with the structure of a Cartier module. It is immediate that these two operations are inverse to one another. By functoriality, nilpotence is clearly preserved (nilpotence is $\kappa^i=0$, resp. $\gamma^i=0$, which is preserved by a functor) and since tensoring with a locally free module is exact, nil-isomorphisms are also preserved. Hence one gets an induced equivalence on the level of crystals.
\end{proof}
As a corollary of this equivalence our \autoref{t.ExistenceMinimal} yields the main result of \cite{Bli.MinimalGamma} on the existence of minimal $\gamma$-sheaves.
\begin{thm}[\cite{Bli.MinimalGamma} Theorem 2.24]
Let $X$ be regular and $F$-finite. For each coherent $\gamma$-sheaf $M$  there is a unique (functorial) minimal $\gamma$-sheaf $M_{min}$ which is nil-isomorphic to $M$.
\end{thm}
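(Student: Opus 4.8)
The plan is to transport the existence of minimal Cartier modules (\autoref{t.ExistenceMinimal}) across the equivalence of \autoref{t.CartierGammaEquiv}. First I would record the intrinsic notion of minimality for a coherent $\gamma$-sheaf $N$: namely that $N$ has neither a non-zero nilpotent $\gamma$-subsheaf nor a non-zero nilpotent $\gamma$-quotient, in exact analogy with \autoref{d.minimal}. Suppose for the moment that $X$ carries a dualizing sheaf $\omega_X$ with $\Fr^\flat\omega_X \cong \omega_X$. Then tensoring with $\omega_X$ (resp.\ $\omega_X^{-1}$) is an exact equivalence between $\gamma$-sheaves and Cartier modules which preserves coherence, nilpotence and nil-isomorphism. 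Consequently it carries $\gamma$-subsheaves to Cartier submodules and identifies nilpotent objects on both sides, so it matches minimal $\gamma$-sheaves with minimal Cartier modules. Setting $N_{min} \defeq \omega_X^{-1}\tensor(\omega_X\tensor N)_{min}$ then gives a coherent minimal $\gamma$-sheaf nil-isomorphic to $N$; functoriality and uniqueness are inherited verbatim from \autoref{t.ExistenceMinimal}.

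The main obstacle is that a global dualizing sheaf $\omega_X$ with $\Fr^\flat\omega_X\cong\omega_X$ is not known to exist on an arbitrary regular $\Fr$-finite $X$ (\cf the remark following \autoref{t.CartierFiniteOverGor}), so the preceding argument only applies locally. To overcome this I would cover $X$ by affine open subsets $U$ small enough that a dualizing sheaf $\omega_U$ with $\Fr^\flat\omega_U\cong\omega_U$ exists; since $X$ regular is in particular normal, such a cover is provided by the discussion in \autoref{s.dualizingsheaf}. On each such $U$ the construction above produces a minimal $\gamma$-sheaf $(N|_U)_{min}$, independent of the auxiliary choice of $\omega_U$ by its intrinsic characterization.

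It remains to glue these local pieces. The key point is that the passage $M \mapsto M_{min}$ for Cartier modules commutes with restriction to open subsets: the stable image $\underline{M}$ commutes with localization (as noted in the proof of \autoref{t.ImagesStabilize}) and the nilpotent radical $M_{\nil}$ is local by \autoref{t.nil_localizes}, so $(-)_{min}$, being built from $\overline{(-)}$ and $\underline{(-)}$, does as well; the same therefore holds for its $\gamma$-counterpart on each $U$. Hence for overlapping $U,U'$ the minimal $\gamma$-sheaves $(N|_U)_{min}$ and $(N|_{U'})_{min}$ both restrict on $U\cap U'$ to $(N|_{U\cap U'})_{min}$, and by the uniqueness in \autoref{t.ExistenceMinimal} transported to $\gamma$-sheaves they are canonically identified there. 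These canonical identifications satisfy the cocycle condition, so the $(N|_U)_{min}$ glue to a global coherent $\gamma$-sheaf $N_{min}$, which is minimal and nil-isomorphic to $N$ since both properties are local. Finally, functoriality and the uniqueness of $N_{min}$ follow by the same gluing, using the functoriality and uniqueness already established on each $U$.
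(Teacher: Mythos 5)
Your proposal is correct and follows essentially the same route as the paper's proof: define minimality for $\gamma$-sheaves intrinsically, transport \autoref{t.ExistenceMinimal} through the equivalence of \autoref{t.CartierGammaEquiv} on sufficiently small affine opens where $\Fr^\flat\omega_U\cong\omega_U$, and glue using the facts that minimality localizes and that minimal $\gamma$-sheaves are unique in their nil-isomorphism class. The only difference is that you spell out the gluing and cocycle argument in more detail, whereas the paper delegates the localization and uniqueness statements to the cited reference.
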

\label{t.minimalGamma}
\begin{proof}
Minimality for $\gamma$-sheaves is defined in the same way as for Cartier modules, namely, a $\gamma$-sheaf is called \emph{minimal} if it has neither nilpotent submodules or quotients. Since the equivalence in \autoref{t.CartierGammaEquiv} is exact and preserves nilpotence it follows immediately that minimality is also preserved. It was observed in \cite[Lemma 2.17]{Bli.MinimalGamma} that minimality for $\gamma$-sheaves localizes, and that minimal $\gamma$-sheaves are unique in their nil-isomorphism class \cite[Proposition 2.25]{Bli.MinimalGamma}. It follows that one can reduce the proof of the existence to the members of a finite affine cover of $X$. Hence we may assume that $X$ is affine and furthermore that $\Fr^\flat \omega_X \cong \omega_X$. In this situation we may apply  \autoref{t.CartierGammaEquiv} to \autoref{t.ExistenceMinimal} to derive the result.
\end{proof}
As a translation of our finite length result \autoref{t.FiniteLenth} for coherent Cartier crystals and of \autoref{t.finiteEndCrys} we get the following statement for coherent $\gamma$-sheaves.
\begin{thm}
\label{t.FiniteLengthGamma}
Let $X$ be regular and $F$-finite. Then every coherent $\gamma$-sheaf has, up to nilpotence, finite length in the category of $\gamma$-sheaves. In other words, the category of $\gamma$-crystals is Artinian (DCC) and Noetherian (ACC). 

Moreover, the endomorphism ring of any simple non-zero $\gamma$-crystal is a finite field containing $\Primefield_q$.
\end{thm}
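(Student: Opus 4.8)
The plan is to transport the corresponding results for Cartier crystals, \autoref{t.CrysFiniteLength} and the finiteness of homomorphisms \autoref{t.finiteHom}, through the equivalence of \autoref{t.CartierGammaEquiv}. The one genuine obstacle is that this equivalence is only guaranteed on affine opens carrying a dualizing sheaf $\omega$ with $\Fr^\flat\omega\cong\omega$, and such an $\omega$ need not exist globally on $X$ (as flagged in the remark preceding the Cartier-operator example). Thus the real content is to verify that both assertions are local on a finite cover and then to glue. First I would fix a finite cover $X=\bigcup_\alpha U_\alpha$ by affine opens small enough that each $U_\alpha$ is regular and $\Fr$-finite (openness preserves both) and carries a dualizing sheaf $\omega_{U_\alpha}$ with $\Fr^\flat\omega_{U_\alpha}\cong\omega_{U_\alpha}$; such a cover exists by the discussion in \autoref{s.dualizingsheaf}. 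On each $U_\alpha$ the equivalence of \autoref{t.CartierGammaEquiv} identifies coherent $\gamma$-sheaves with coherent Cartier modules compatibly with nilpotence and nil-isomorphism, hence $\gamma$-crystals on $U_\alpha$ with Cartier crystals on $U_\alpha$.

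For the finite length claim, the ascending chain condition is immediate from the coherence of the underlying module together with the noetherianity of $X$, exactly as in \autoref{t.CrysFiniteLength}. For the descending chain condition, given a chain $M\supseteq M_1\supseteq M_2\supseteq\ldots$ of $\gamma$-subsheaves I would restrict it to each $U_\alpha$; applying \autoref{t.CrysFiniteLength} to the corresponding Cartier crystals on $U_\alpha$ produces an index $N_\alpha$ with $(M_i/M_{i+1})|_{U_\alpha}$ nilpotent for all $i\geq N_\alpha$. Setting $N=\max_\alpha N_\alpha$ (a finite maximum, since the cover is finite), for every $i\geq N$ the quotient $M_i/M_{i+1}$ is nilpotent on each $U_\alpha$, hence nilpotent on $X$ by the local-to-global character of nilpotence for coherent $\gamma$-sheaves (the $\gamma$-analogue of \autoref{t.nil_localizes}, combining finitely many nilpotency orders by taking their maximum). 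Thus the chain of $\gamma$-crystals stabilizes, which is the Artinian property.

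For the endomorphism ring, let $\mathcal N$ be a simple non-zero $\gamma$-crystal and put $D:=\End(\mathcal N)$. As the category of $\gamma$-crystals is $\Primefield_q$-linear, Schur's lemma makes $D$ a division ring containing $\Primefield_q$, so it remains only to see that $D$ is finite, after which Wedderburn's little theorem forces it to be a field. Restriction to the cover gives a ring homomorphism $D\to\prod_\alpha\End_{\Crys}(\mathcal N|_{U_\alpha})$, and this map is injective: an endomorphism of $\mathcal N$ restricting to zero in the $\gamma$-crystal category on each $U_\alpha$ has nilpotent image on each $U_\alpha$, hence (finite cover, maximum of orders) nilpotent image on $X$, hence is zero in $\Crys(X)$. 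Under the equivalence on $U_\alpha$ each factor is the endomorphism ring of a Cartier crystal on $U_\alpha$; choosing the minimal representative $M_\alpha$ (which satisfies $\Ca(M_\alpha)=M_\alpha$ and $(M_\alpha)_{\nil}=0$) identifies this factor with $\Hom_{\Cart}(M_\alpha,M_\alpha)$, a finite-dimensional $\Primefield_q$-vector space by \autoref{t.finiteHom}. The product over the finite cover is therefore finite, whence $D$ is finite. Note that I invoke \autoref{t.finiteHom} rather than \autoref{t.finiteEndCrys} here, since the restriction $\mathcal N|_{U_\alpha}$ of a simple crystal need not remain simple.

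The main obstacle is exactly the one isolated at the outset: because there may be no global dualizing sheaf $\omega_X$ with $\Fr^\flat\omega_X\cong\omega_X$, one cannot simply apply \autoref{t.CartierGammaEquiv} on $X$ itself and must work over a finite affine cover. Everything then rests on two reductions which are routine once available, namely that nilpotence of a coherent $\gamma$-sheaf and vanishing of a $\gamma$-crystal morphism can each be tested on a finite affine cover by combining finitely many local nilpotency orders via their maximum; I would state these explicitly (as the $\gamma$-analogues of \autoref{t.nil_localizes}) before carrying out the gluing above.
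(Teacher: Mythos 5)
Your proposal is correct and follows essentially the same route as the paper: reduce both assertions to a finite affine cover on which $\Fr^\flat\omega\cong\omega$ holds, apply the equivalence of \autoref{t.CartierGammaEquiv} there to import \autoref{t.FiniteLenth} and the $\Hom$-finiteness, and glue using the finiteness of the cover together with injectivity of the restriction map on endomorphisms. You merely spell out details the paper leaves implicit (the local-to-global nilpotence test for $\gamma$-sheaves and the observation that restrictions of a simple crystal need not be simple, so one must invoke \autoref{t.finiteHom} rather than \autoref{t.finiteEndCrys}), which is a correct and worthwhile clarification rather than a different argument.
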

\begin{proof}
The Noetherian-ness is clear, since already as $\CO_X$-modules a coherent $\gamma$-sheaf satisfies the ascending chain condition on submodules (we always assume that $X$ is locally Noetherian). Hence we have to show that any descending chain of $\gamma$-sheaves stabilizes (up to nilpotence). But this may be checked on a finite affine cover. Hence we may assume that $X$ is affine and furthermore one has an isomorphism $\Fr^\flat \omega_X \cong \omega_X$. This enables us to employ \autoref{t.CartierGammaEquiv} which reduces the statement to the finite length result for coherent Cartier modules (up to nilpotence) shown in \autoref{t.FiniteLenth}.

For the last assertion observe first that, by the lemma of Schur, the endomorphism ring of a simple non-nilpotent $\gamma$-crystal is a skew field. Next note that any endomorphism on $X$ induces endomorphisms on the restrictions to any open of a fixed finite affine cover. On sufficiently small affine open sets we can apply  \autoref{t.CartierGammaEquiv}, and so there the endomorphism rings are finite. But the map sending a global endomorphisms to its restrictions on an affine cover is clearly injective.
\end{proof}

\subsubsection{Finitely generated unit $R[\Fr]$-modules.}
There is a functor from (coherent) $\gamma$-sheaves to the category of (locally finitely generated) unit $\CO_X[\Fr]$-modules. This latter category was introduced in \cite{Lyub}, under the name of $\mathcal{F}$-finite modules, in the regular affine case, and in \cite{EmKis.Fcrys} for regular schemes of finite type over a field. It consist of $\CO_X$-quasi-coherent left $\CO_X[\Fr]$-modules $\CM$ which are locally finitely generated over $\CO_X[\Fr]$, and such that the adjoint map to the $\Fr$-action
\[
    \theta: \Fr^*\CM \to \CM
\]
is an isomorphism. Already in \cite{Lyub} it was observed that there is a functor from coherent $\gamma$-sheaves to finitely generated unit $\CO_X[F]$-modules. This functor, denoted $\Gen$, sends a $\gamma$-sheaf $M \to[\gamma] \Fr^*M$ to the limit $\Gen(M)$ of the directed system
\[
    M \to[\gamma] \Fr^*M \to[\Fr^*\gamma] \Fr^{2*}M \to \ldots
\]
It is shown in \cite{Bli.MinimalGamma} that this functor induces an equivalence of categories from $\gamma$-crystals to finitely generated unit $\CO_X[\Fr]$-modules.
\begin{prop}[\cite{Bli.MinimalGamma} Theorem 2.7]\label{t.equivGammaUnit}
Let $X$ be regular and $F$-finite. Then the Functor $\Gen$ from coherent $\gamma$-sheaves on $X$ to finitely generated unit $\CO_X[F]$-modules induces an equivalence
\[
    \{\text{$\gamma$-crystals}\} \to[\simeq] \{\text{f.g. unit $\CO_X[F]$-modules}\}.
\]
\end{prop}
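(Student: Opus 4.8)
The plan is to check that $\Gen$ takes values in finitely generated unit $\CO_X[\Fr]$-modules, that it is exact and annihilates nilpotent $\gamma$-sheaves so that it descends to the category of $\gamma$-crystals, and then to exhibit a quasi-inverse by choosing a coherent \emph{generator} (a root, in the sense of \cite{Lyub}) of a given unit module.

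First I would verify that $\Gen(M)$ is a finitely generated unit module. Since $X$ is regular and $\Fr$-finite, $\Fr$ is flat by \cite{Kunz}, so $\Fr^*$ is exact and commutes with the filtered colimit $\Gen(M)=\varinjlim_i \Fr^{i*}M$. Hence $\Fr^*\Gen(M)=\varinjlim_i \Fr^{(i+1)*}M$, and the structural map $\theta\colon \Fr^*\Gen(M)\to\Gen(M)$ is the canonical identification of this shifted colimit with $\Gen(M)$, which is an isomorphism; moreover $\Gen(M)$ is generated over $\CO_X[\Fr]$ by the image of the coherent module $M$, so it is locally finitely generated. Next, $\Gen$ is exact, being a composite of the exact functors $\Fr^{i*}$ with a filtered colimit. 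If $M$ is nilpotent, say $\gamma^i=0$, then the composite $M\to \Fr^{i*}M$ of the first $i$ transition maps equals $\gamma^i=0$, and the analogous vanishing holds starting from every stage, so $\Gen(M)=0$. Combining exactness with this vanishing shows that $\Gen$ carries nil-isomorphisms to isomorphisms and therefore induces a well-defined functor $\overline{\Gen}$ on $\gamma$-crystals.

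For essential surjectivity I would, given a finitely generated unit module $\CM$, pick a coherent $\CO_X$-submodule $M_0\subseteq\CM$ generating $\CM$ over $\CO_X[\Fr]$ and consider the restriction of $\theta^{-1}\colon\CM\xrightarrow{\sim}\Fr^*\CM$ to $M_0$. Writing $\Fr^*\CM=\varinjlim_n \Fr^* M_n$ with $M_n=M_0+\Fr M_0+\dots+\Fr^n M_0$ coherent and exhausting $\CM$, quasi-compactness together with coherence of $M_0$ forces $\theta^{-1}|_{M_0}$ to factor through $\Fr^*M_n$ for some $n$; replacing $M_0$ by the still-coherent, still-generating $M_n$ yields a map $\gamma:=\theta^{-1}|_M\colon M\to\Fr^*M$, that is, a coherent $\gamma$-sheaf with $\Gen(M)\cong\CM$. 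Full faithfulness I would then handle using minimal $\gamma$-sheaves (\autoref{t.minimalGamma}): each $\gamma$-crystal has a unique minimal representative $M$, for which $\gamma$ is injective and hence the canonical map $M\to\Gen(M)$ is injective; a morphism of unit modules $\Gen(M)\to\Gen(N)$ restricts, by the same factorization argument, to a morphism of minimal roots $M\to N$ inducing it, while distinct crystal morphisms induce distinct unit-module morphisms.

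The hard part is the quasi-inverse, namely Lyubeznik's root theory: proving that every finitely generated unit module admits a coherent generator and --- crucially --- that any two such generators are nil-isomorphic, so that the assignment $\CM\mapsto M$ is well defined on crystals and genuinely inverse to $\overline{\Gen}$. Here one may reduce to the affine case, where the existence of (minimal) roots is classical, since both the statement and the minimality theory localize on a finite affine cover. This reduction is the substantive content carried out in \cite{Bli.MinimalGamma} (building on \cite{Lyub}); the minimality theory invoked above is exactly what supplies a canonical root and makes the verification of full faithfulness clean, rather than merely up to the ambiguity of the chosen generator.
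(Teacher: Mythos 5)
The paper does not actually prove this proposition --- it imports it wholesale by citation from \cite{Bli.MinimalGamma} (Theorem 2.7) --- so there is no in-paper proof to compare against. Your outline reproduces the standard root/generator argument underlying that reference and is essentially sound: $\Gen$ is exact and kills nilpotents because $\Fr^*$ is flat (Kunz) and commutes with filtered colimits, hence descends to crystals; essential surjectivity comes from extracting a coherent root via the Noetherian factorization of $\theta^{-1}|_{M_0}$ through a finite stage; and full faithfulness reduces to minimal representatives. Two spots are glossed but routine and worth writing out: (i) after enlarging $M_0$ to $M_n$ you still need the inclusion $\theta^{-1}(\Fr^j M_0)\subseteq \Fr^*(\Fr^{j-1}M_0)$ for $j\geq 1$ to conclude that $\theta^{-1}(M_n)\subseteq\Fr^*M_n$, i.e.\ that $M_n$ really is a root and not just a generator on which $\theta^{-1}$ factors; (ii) in the fullness argument the restriction of a unit-module morphism to a minimal root $M$ lands a priori in a shifted stage $\Fr^{n*}N$ of the colimit, not in $N$, so one must invoke the standard fact that $\gamma^n\colon N\to\Fr^{n*}N$ is a nil-isomorphism of $\gamma$-sheaves to descend to a crystal morphism $\CM\to\CN$. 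Finally, note that once full faithfulness and essential surjectivity are established, the equivalence follows formally; the separate worry about well-definedness of the assignment $\CM\mapsto M$ (uniqueness of roots up to nil-isomorphism) is subsumed and need not be handled as an independent ``hard part.''
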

As an immediate application of \autoref{t.equivGammaUnit} and \autoref{t.FiniteLengthGamma} we obtain the following generalization of the main result of \cite[Theorem 3.2]{Lyub}.
\begin{thm}
\label{t.FiniteLenthFGunit}
Let $X$ be regular and $F$-finite, then every finitely generated unit $\CO_X[\Fr]$-module has finite length in the category of unit $\CO_X[\Fr]$-modules. Moreover the endomorphism ring of any simple finitely generated unit $\CO_X[\Fr]$-module is a finite field containing~$\Primefield_q$.
\end{thm}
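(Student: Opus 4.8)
The plan is to deduce both assertions from the corresponding statements for $\gamma$-crystals by transporting them through the equivalence of categories furnished by \autoref{t.equivGammaUnit}. Indeed, that proposition asserts that the functor $\Gen$ is an equivalence between the category of $\gamma$-crystals on $X$ and the category of finitely generated unit $\CO_X[\Fr]$-modules. Since an equivalence of abelian categories preserves all the relevant structure --- it carries simple objects to simple objects, preserves the Artinian and Noetherian conditions (hence finite length, via Jordan--H\"older), and induces isomorphisms of endomorphism rings --- the finiteness properties we seek transfer immediately from one side to the other.

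For the finite length statement, \autoref{t.FiniteLengthGamma} guarantees that the category of $\gamma$-crystals is both Artinian and Noetherian, so every object has finite length. Applying $\Gen$, every finitely generated unit $\CO_X[\Fr]$-module has finite length in the category of finitely generated unit $\CO_X[\Fr]$-modules. To upgrade this to finite length in the full category of unit $\CO_X[\Fr]$-modules, as the statement demands, I would invoke the basic fact from Lyubeznik's theory \cite{Lyub} that a unit $\CO_X[\Fr]$-submodule of a finitely generated unit module is again finitely generated; this identifies the lattice of unit submodules with the lattice of finitely generated ones, so that the two notions of length coincide.

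For the endomorphism ring, the ``moreover'' clause of \autoref{t.FiniteLengthGamma} already records that the endomorphism ring of any simple non-zero $\gamma$-crystal is a finite field containing $\Primefield_q$. Because $\Gen$ is an equivalence, every simple finitely generated unit $\CO_X[\Fr]$-module arises as $\Gen(\CN)$ for some simple $\gamma$-crystal $\CN$, with $\End(\Gen(\CN)) \cong \End(\CN)$, so the conclusion follows. Since essentially all of the mathematical substance is already contained in \autoref{t.FiniteLengthGamma} and \autoref{t.equivGammaUnit}, the only genuine point requiring attention here is the passage from the finitely generated category to the full category of unit modules, \ie~the closure of finitely generated unit modules under taking unit submodules.
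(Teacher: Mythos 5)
Your proposal is correct and follows exactly the route the paper takes: the theorem is obtained by combining the equivalence $\Gen$ of \autoref{t.equivGammaUnit} with the finite length and finite endomorphism ring statements for $\gamma$-crystals in \autoref{t.FiniteLengthGamma}. Your additional remark that unit submodules of finitely generated unit modules are again finitely generated (so that length in the full category of unit $\CO_X[\Fr]$-modules agrees with length in the finitely generated subcategory) is a point the paper leaves implicit, and you handle it correctly by appealing to Lyubeznik's theory.
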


\begin{rem}
Lyubeznik shows in \cite[Theorem 3.2]{Lyub} that the finite length for finitely generated unit $\CO_X[F]$-modules holds in the case that $X$ is regular and essentially of finite type over a regular local ring. So the result just given extends this to all $F$-finite schemes, but does not completely recover Lyubeznik's result. Important cases which are covered by \cite[Theorem 3.2]{Lyub} but not our result is that of a finite type scheme $X$ over a field with $[k:k^p] =\infty$, or $X = \Spec R$ with $R$ local but not $F$-finite. We suspect that the main result in this paper, the finite length of Cartier crystals \autoref{t.FiniteLenth}, will also hold in these cases. However, our proof, as well as the transition from Cartier modules to finitely generated unit $\CO_X[F]$-modules, is closely tied to the $F$-finite-ness there seems to be some different techniques necessary to obtain these results.
\end{rem}

Note also that in combining \autoref{t.CartierGammaEquiv} with the above quoted \cite[Theorem 2.7]{Bli.Cart} we obtain
\begin{thm}
Let $X$ be regular and $F$-finite, and assume that there is a dualizing sheaf $\omega_X$ such that $F^\flat\omega_X \cong \omega_X$. Then the category of Cartier modules on $X$ is equivalent to the category of finitely generated unit $\CO_X[\Fr]$-modules.
\end{thm}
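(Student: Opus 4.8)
The plan is to obtain the asserted equivalence as a composition of the two equivalences already at our disposal, routing through the category of $\gamma$-crystals. First I would invoke \autoref{t.CartierGammaEquiv}: since $X$ is regular and $\Fr$-finite and we have fixed a dualizing sheaf with $\Fr^\flat\omega_X \cong \omega_X$, the functor $M \mapsto \omega_X^{-1} \tensor M$ is an equivalence from Cartier modules on $X$ to $\gamma$-sheaves on $X$, with quasi-inverse $N \mapsto \omega_X \tensor N$. As recorded there, this functor is exact and preserves nilpotence and nil-isomorphism, so it descends to an equivalence between Cartier crystals and $\gamma$-crystals.

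Next I would apply \autoref{t.equivGammaUnit}, which asserts that the functor $\Gen$ induces an equivalence between the category of $\gamma$-crystals and the category of finitely generated unit $\CO_X[\Fr]$-modules. Composing the crystal-level equivalence from the previous step with this one produces the functor $M \mapsto \Gen(\omega_X^{-1} \tensor M)$ and yields the desired equivalence. No new computation is required, as both ingredients are already established; the content of the proof is purely the bookkeeping of the composition.

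The one point that demands care---and the only genuine obstacle---is matching the level of localization. The equivalence of \autoref{t.equivGammaUnit} is an equivalence of $\gamma$-\emph{crystals} with unit modules, not of $\gamma$-sheaves: the functor $\Gen$ sends a nilpotent $\gamma$-sheaf to the zero unit module and is therefore not faithful before nilpotence is killed. Consequently the composite functor can only be an equivalence after one passes from Cartier modules to Cartier crystals, and the statement must be read in that sense (equivalently, via \autoref{t.CartierGammaEquiv}, the source category is taken up to nil-isomorphism). Once this identification of the two crystal categories under the equivalence of \autoref{t.CartierGammaEquiv} is in place, the result follows immediately.
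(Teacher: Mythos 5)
Your proof is correct and is exactly the route the paper takes: the theorem is stated there as an immediate consequence of combining \autoref{t.CartierGammaEquiv} with the $\Gen$ equivalence of \autoref{t.equivGammaUnit}, with no further argument given. Your caveat about the level of localization is well taken --- since $\Gen$ kills nilpotent $\gamma$-sheaves, the composite is an equivalence only after passing to crystals, and the paper's phrasing ``category of Cartier modules'' should indeed be read up to nil-isomorphism.
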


\bibliographystyle{amsalpha}
\bibliography{./../CommonFiles/MyBibliography}

\end{document}